\newtheorem{lemma}{Lemma}
\newtheorem{theorem}{Theorem}
\newtheorem{proposition}{Proposition}
\newtheorem{claim}{Claim}
\newtheorem{cor}{Corollary}
\newtheorem{obs}{Observation}
\numberwithin{equation}{section}
\DeclareMathOperator{\sys}{sys}
\DeclareMathOperator{\arccosh}{arccosh}
\definecolor{mygreen}{RGB}{28,172,0} 
\definecolor{mylilas}{RGB}{170,55,241}
\begin{document}

\lstset{language=Matlab,
    breaklines=true,
    morekeywords={matlab2tikz},
    keywordstyle=\color{blue},
    morekeywords=[2]{1}, keywordstyle=[2]{\color{black}},
    identifierstyle=\color{black},
    stringstyle=\color{mylilas},
    commentstyle=\color{mygreen},
    showstringspaces=false,
    numbers=left,
    numberstyle={\tiny \color{black}},
    numbersep=9pt, 
	xleftmargin=1cm
}

\title{Maximal systole of hyperbolic surface with largest $S^3$ extendable abelian symmetry}

\author{Yue Gao}
\address{BICMR and School of Mathematical Sciences, Peking University, Beijing 100871, CHINA}
\email{yue\_gao@pku.edu.cn}

\author{Jiajun Wang}
\address{School of Mathematical Sciences, Peking University, Beijing 100871, CHINA}
\email{wjiajun@pku.edu.cn}

\date{}

\maketitle

\begin{abstract}
	We give the formula for the maximal systole of the surface admits the largest $S^3$-extendable abelian group symmetry. The result we get is $2\arccosh K$. Here 
\begin{eqnarray*}
		K &=& \sqrt[3]{\frac{1}{216}L^3 +\frac{1}{8} L^2 + \frac{5}{8} L - \frac{1}{8} + \sqrt{\frac{1}{108}L(L^2+18L+27)} } \\
		& & + \sqrt[3]{\frac{1}{216}L^3 +\frac{1}{8} L^2 + \frac{5}{8} L - \frac{1}{8} - \sqrt{\frac{1}{108}L(L^2+18L+27)} } \\
		& & + \frac{L+3}{6}.
\end{eqnarray*}
and  $L= 4\cos^2 \frac{\pi}{g+1}$. 
\end{abstract}

\tableofcontents
\section{Introduction}

  In the study of hyperbolic surfaces, systole is an important topic. 
  It has a wide connection with topics on surfaces, like Teichm\"uller theory and differential geometry on surfaces. On Teichm\"uller theory, from the classical Mumford's compactness criterion \cite{munf} to some recent progress in the Weil-Peterson metric \cite{wolpert17}, \cite{wu19growth}, systole plays an important role. In differential geometry, it relates to  the spectrum of Laplacian operator (\cite{bmm}\cite{bmm18}\cite{mon}), the optimal systolic ratio (\cite{chen15systoles}\cite{ck03universal}\cite{gromov83}) and so on. 
For a survey on the study of the systole, see Parlier \cite{parlier2014simple}. 

    In this paper, systole on a closed hyperbolic surface is refered to either a shortest closed geodesic or its  length, and we often used it to refer to the latter.
	In another point of view, systole is also a function on $\mathcal{M}_g$, the moduli space of all the genus-$g$ closed hyperbolic surfaces.

An important topic on systole is to get the maximal systole on $\mathcal{M}_g$. This question is very hard. Today the only known closed surface with the maximal systole is the Bolza surface with genus 2, which was proved by Jenni \cite{jenni1984ersten}. 
However, there are progresses on globally maximal systole on subspaces of $\mathcal{M}_g$ and locally maximal systoles on $\mathcal{M}_g$. On subspace-maxima,  Bavard got the maximal systole of genus 2 and 5 hyperelliptic surfaces in \cite{bavard1992systole} in 1992. On locally maximal systole, Schmutz \cite{schmutz1993reimann} gave a necessary and sufficient condition for the surfaces with locally maximal systole 
and got some local-maximal-systole examples with polyhedral symmetry. Recently Bourque and Rafi constructed surfaces with locally maximal systoles and trivial symmetry in \cite{bourque18local}. 

Another approach is estimating the lower bound of the maximal systole. 
 P. Buser and P. C. Sarnak \cite{buser1994period} got surfaces with systoles longer than $4/3\log g$ in 1994 by arithmetic method. Later Katz, Schaps and Vishne \cite{katz2007logarithmic} obtained more surfaces with this lower bound. Part of their examples are surfaces with the Hurwitz symmetry. 
 Recently, \cite{petri2015graphs} and \cite{petri2018hyperbolic} obtained more concrete examples with systole longer than $4/7\log g -K$ by different method.

 Inspired by \cite{katz2007logarithmic} and \cite{schmutz1993reimann}, we are interested in the connections between surface symmetry and systoles and consider the maximal systole of surfaces with some big-order symmetries.

The symmetry we consider is the largest $S^3$ extendable abelian symmetry.

 The $S^3$-extendable symmetry on topological surface was recently defined in \cite{wang2013embedding} and \cite{wang2015embedding} and the largest $S^3$-extendable abelian symmetry is a special type of it.

Here is the definition of $S^3$-extendable symmetry on topological surface: For the finite group $G$ acting on the surface $\Sigma_g$, $G$ is $S^3$-extendable if and only if there is an embedding $i:\Sigma_g\to S^3$ such that for any $ g\in G$, there is a $\tilde{g}\in SO(4)$ acting on $S^3$ and the following diagram commutes: 
\[
		\xymatrix{
				\Sigma_g \ar[d]^{i} \ar[r]^{g} & \Sigma_g \ar[d]^{i} \\
				S^3 \ar[r]^{\tilde{g}} & S^3. 
		}
\]

Here is a definition for the largest $S^3$ extendable abelian symmetry: For the $S^3$ extendable surface $(\Sigma,G)$, when $G$ is an abelian group and the order of $G$ is maximal among all the abelian group acting on $\Sigma_g$ $S^3$-extendably, we call $(\Sigma_g,G)$ a genus $g$ surface with largest $S^3$-extendable abelian symmetry.

For convenience, we call $(\Sigma_g,G)$, the genus $g$ largest $S^3$ extendable abelian surface: $\Gamma(2,n)$-surface, as \cite{wang2013embedding} \cite{wang2015embedding}. Here $n=g+1$ . 

We give the definition of hyperbolic version of this symmetry: 
For a hyperbolic surface $\Sigma_g$ with a group $G$ isometrically acting on it, if $(\Sigma_g,G)$ is a surface with largest $S^3$ extendable abelian symmetry when forgeting the hyperbolic structure, then we call $(\Sigma_g,G)$ a hyperbolic surface with largest $S^3$ extendable abelian symmetry. 
All of the genus $g$ hyperbolic surfaces with largest $S^3$ extendable abelian symmetry form a subspace of $\mathcal{M}_g$, which can be parametrized by two parameters (for details, see Section \ref{sec_construct}). It is known that systole can be viewed as a function from $\mathcal{M}_g$ to $\mathbb{R}^+$. 
We define the {\em maximal systole of $\Gamma(2,n)$ surface }to be maximal value of the systole function on the subspace of $\mathcal{M}_g$ of all the genus $g$ hyperbolic surfaces with largest $S^3$ extendable abelian symmetry.

Our result is:

 \begin{theorem}
		 The maximal systole of the $\Gamma(2,n)$ surface is 
		 \[
				 2\arccosh K. 
		 \]

Here 
\begin{eqnarray*}
		K &=& \sqrt[3]{\frac{1}{216}L^3 +\frac{1}{8} L^2 + \frac{5}{8} L - \frac{1}{8} + \sqrt{\frac{1}{108}L(L^2+18L+27)} } \\
		& & + \sqrt[3]{\frac{1}{216}L^3 +\frac{1}{8} L^2 + \frac{5}{8} L - \frac{1}{8} - \sqrt{\frac{1}{108}L(L^2+18L+27)} } \\
		& & + \frac{L+3}{6}.
\end{eqnarray*}
and  $L= 4\cos^2 \frac{\pi}{n}$. 

 The maximal systole is obtained when 
 \[
		 (c,t) = \left ( \arccosh K, 2\arccosh \frac{K + 1}{2 \cos \frac{\pi}{n}} \right ).  
 \]
 The symbol $c$ and $t$ are defined in Section \ref{sec_construct}. 

		 \label{thm_main}
 \end{theorem}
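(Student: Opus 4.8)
The plan is to parametrize the $\Gamma(2,n)$ surface by the two length parameters $(c,t)$ coming from its pants-type decomposition (as built in Section \ref{sec_construct}), identify explicitly which simple closed geodesics are candidates for the systole, write each of their lengths as an explicit monotone function of $(c,t)$, and then solve the resulting min-max problem. Because the symmetry group $G'$ is very large, the set of candidate systolic curves splits into a small number of $G'$-orbits; for each orbit one gets a single length function $\ell_i(c,t)$, and $\sys(\Sigma(c,t)) = \min_i \ell_i(c,t)$. Maximizing a minimum of a handful of smooth functions on a two-dimensional parameter space is then a matter of locating the point where the binding constraints cross, which is why the optimum is expected to occur at an interior point where (at least) two of the $\ell_i$ are simultaneously equal and equal to the systole.

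The key steps, in order, are: (1) From the construction, record the building blocks of $\Gamma(2,n)$ and the associated right-angled hexagon / pentagon relations of hyperbolic trigonometry, so that the lengths of all short geodesics are expressed via $\cosh$ and the two parameters; here the angle $\pi/n$ enters through the cone-point/rotation data of the $G'$-action, producing the quantity $\cos\frac{\pi}{n}$ and hence $L = 4\cos^2\frac{\pi}{n}$. (2) Enumerate the (finitely many up to symmetry) isotopy classes of simple closed curves that can realize the systole — typically the ``short'' core curves and the curves crossing the gluing regions — and discard those that are always longer. (3) Write $\sys = \min\{\ell_1(c,t),\ell_2(c,t),\dots\}$ and observe that, as $c$ increases some $\ell_i$ grow while others shrink, and similarly for $t$, so the maximum of the min is attained where the competing families balance. (4) Impose $\ell_i = \ell_j$ for the binding pair, reduce to a one-variable problem, and solve: the equation for $K = \cosh(c)$ at the optimum turns out to be a cubic in $K$, whose Cardano solution is exactly the displayed expression with the discriminant term $\sqrt{\frac{1}{108}L(L^2+18L+27)}$ and the shift $\frac{L+3}{6}$; back-substituting gives $t = 2\arccosh\frac{K+1}{2\cos\frac{\pi}{n}}$, and the systole is the length of the balancing curve, which works out to $2\arccosh K$. (5) Finally, verify that this interior critical point is a genuine global maximum on the parameter space (checking behaviour on the boundary / as parameters degenerate, and that no other candidate curve undercuts the value there).

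The main obstacle I expect is step (2) together with step (5): correctly determining the complete list of systole candidates and proving that none of the infinitely many other simple closed geodesics is ever shorter than the claimed value. The large symmetry group helps cut the list down, but one still has to argue carefully — e.g. via a systolic length estimate or a covering/cut-and-paste argument — that any curve not on the short list has length bounded below by $2\arccosh K$ everywhere the optimum is approached. Reducing the optimization to the cubic in step (4) is then essentially bookkeeping with hyperbolic trigonometric identities; the derivation of the cubic (rather than a higher-degree equation) is the pleasant surprise that makes the closed form possible. A secondary technical point is checking that the purported optimizer $(c,t) = \left(\arccosh K,\, 2\arccosh\frac{K+1}{2\cos\frac{\pi}{n}}\right)$ actually lies in the admissible region of parameters for every $n \ge 2$, i.e. that the arguments of $\arccosh$ exceed $1$, which amounts to an inequality in $L$ that should follow from $0 < \cos^2\frac{\pi}{n} \le 1$.
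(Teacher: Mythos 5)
Your outline follows the same overall strategy as the paper: parametrize by $(c,t)$, reduce to finitely many candidate length functions, maximize the minimum by balancing them, and solve the resulting cubic in $K=\cosh c$ by Cardano. But the two steps you yourself flag as ``the main obstacle'' --- producing the complete finite list of candidates, and certifying the balanced point as the global maximum --- are exactly where all of the paper's work lies, and your proposal supplies no mechanism for either. For the first, the paper does not enumerate curves on $\Sigma$ directly: it pushes every systole down to the orbifold $S^2(2,2,2,n)$ and shows (Claim \ref{claim_basic} together with Observation \ref{obs_key}, via Lemmas \ref{lem_hyp_ell}--\ref{lem_hyp_ell_n_2}) that the image can have no self-intersection at a regular point, hence must be either an arc joining two cone points or a simple loop through exactly one order-two cone point (Lemma \ref{lem_unique}). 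Each such homotopy class is indexed by an integer (Lemmas \ref{lem_corresp_seg} and \ref{lem_corresp_scc}), and a Seifert-surface-style cyclic-cover construction (Theorem \ref{thm_equ}) shows that the ratio of upstairs to downstairs length is constant within each family, so only the shortest member of each of four families can lift to a systole; an unfolding/reflection argument in the pentagon fundamental domain then identifies that shortest member explicitly (Proposition \ref{prop_reduce}), and two remaining candidates are excluded by separate arguments (Propositions \ref{prop_cuff_fake_not} and \ref{prop_11_not}). Without some argument of this kind your ``min over finitely many $\ell_i$'' is unjustified, since a priori infinitely many isotopy classes compete.

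Your balancing step is also weaker than what is needed: you ask only that at least two of the $\ell_i$ coincide at the optimum, but the paper must show that all three surviving lengths $|l_C|$, $|l_{CD}|$, $|l_{CE}|$ are equal there (Proposition \ref{prop_equal}). The proof is not a generic first-order argument: for fixed $c$ one of the three functions is increasing in $t$, one decreasing, and one constant, so the maximum over $t$ occurs where the first two cross; if that crossing lies above the constant function one can increase $c$, and if below, a cut-and-paste change of coordinates $(c,t)\mapsto(c'',t'')$ that interchanges the roles of $l_{CD}$ and $l_{CE}$ reduces to the previous case. This hidden symmetry of the parameter space is a genuine idea you would need to find; a point where only two of the lengths balance is not a maximum here. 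The final computation (the cubic $2K^3-3K^2+1-L(K+1)^2=0$ and its Cardano solution, with $k=c$ giving systole $2\arccosh K$) matches your step (4) and is indeed bookkeeping once the equilateral-triangle configuration in the cut-open annulus is set up.
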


 Numerical results of small genera are shown in Table \ref{tab_num}. 

 Compared with the work \cite{bgw} on the systole of surface with large cyclic symmetry, the surface with large cyclic symmetry has unique geometric structure, while surfaces studied in this paper form a subspace of $\mathcal{M}_g$ parametrized by two parameters when $g\ge2$. Hence the method in this paper is quite different from that in \cite{bgw}. In this paper, we obtain our result by classifying the family of curves that are possible to be the shortest geodesics on surfaces admitting this symmetry and then getting the condition when the systole is maximal.

 \begin{table}
		 \centering
		 \begin{tabular}{ll}
				 Genus & $\sys = 2\arccosh K $ \\

2	&     3.0571\\
3	&     3.6478\\
4	&     3.9078\\
5	&     4.0464\\
6	&     4.1291  
		 \end{tabular}
		 \caption{Maximal systole of surface with largest $S^3$-extendable abelian symmetry}
		 \label{tab_num}
 \end{table}

 It is worth to note that when $g = 2 $, the $\Gamma(2,n)$ surface is the Bolza surface. Bolza surface is the genus 2 surface with the maximal systole and is constructed by attaching the opposite sides of a hyperbolic regular octagon (Figure \ref{fig_bolza_2}). In Figure \ref{fig_bolza_1}, the pants is one of the two pants that forms the $\Gamma(2,3)$ surface. The points $A_1, \dots, A_6$ are fixed points of the hyperelliptic involution. All of the segments between these points shown in Figure \ref{fig_bolza_1} have the same length (actually half of the systole). There is an isometry between $\Gamma(2,3)$ surface (Figure \ref{fig_bolza_1}) and the Bolza surface (Figure \ref{fig_bolza_2}). This isometry is shown in Figure \ref{fig_bolza}: the $A_i$ in Figure \ref{fig_bolza_1} are mapped to Figure \ref{fig_bolza_2}. 

\begin{figure}[htbp]
		\centering
		\subfigure[]{
		\includegraphics{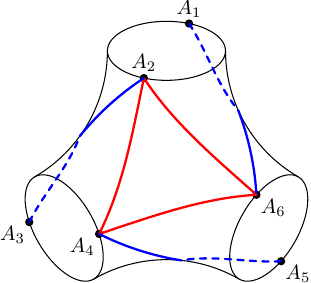}
		\label{fig_bolza_1}
}
\subfigure[]{
		\includegraphics{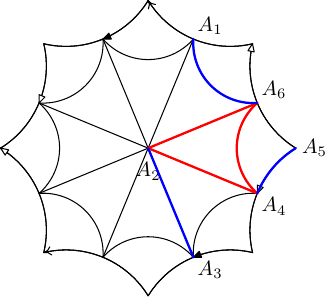}
		\label{fig_bolza_2}
}
		\caption{}
		\label{fig_bolza}
\end{figure}

In Section \ref{sec_construct}, we construct the hyperbolic $\Gamma(2,n)$ surface and describe its symmetry. In Section \ref{sec_int}, we give some useful lemmas on the intersection of the systoles. 

 Then in Section \ref{sbsc_image}, by the hyperellipticity of the $\Gamma(2,n)$ surface, we proved that systoles must be the simple curves meeting the singular points of the orbifolds (the quotients of the surface by the symmetric group). 

 We divided such curves into three types in Section \ref{sbsc_image} and found the shortest one in each type in Section \ref{sec_shortest}. These are the central parts of this work. 

 Next we prove that the surface's systole is maximal if and only if the three curves have the same length (Proposition \ref{prop_equal}). Finally we calculate this length using the condition that the three candidates have equal length. 

We give more details of the part dividing the curves into three types. 
We divided the curves in the orbifold by the singular points they meet. 
In each type, a curve corresponds to an element in $\mathbb{Z}$ (Lemma \ref{lem_corresp_seg} and \ref{lem_corresp_scc}). 
Then based on this characterization, by the method of constructing cyclic covers similar to that using Seifert surface in knot theory, we construct the surface from orbifold and prove that for two curves $l$, $l'$ in the orbifold of the same type, if $l$ is longer than $l'$, then $l$'s lift is longer than $l'$'s (Theorem \ref{thm_equ}). Then in each type, we found the shortest curve in Proposition \ref{prop_reduce} together with Proposition \ref{prop_cuff_fake_not} and \ref{prop_11_not}. (Proposition \ref{prop_cuff_fake_not} and \ref{prop_11_not} exclude two curves that cannot be lifted to a systole in the maximal surface and are not excluded by Proposition \ref{prop_reduce}. )

{\bf Acknowledgement: }
We acknowledge Prof. Shicheng Wang and Sheng Bai for many helpful discussions and suggestions. Yue Gao acknowledge Prof. Shicheng Wang's supervison and many help and support during his PhD studies. We acknowledge Prof. Ying Zhang for helpful comments.

\section{The construction and symmetry of $\Gamma(2,n)$ surface}
\label{sec_construct}

We construct the hyperbolic $\Gamma(n,2)$ surface, which is similar to the construction of topological $\Gamma(n,2)$ surface in \cite{wang2015embedding} and \cite{wang2013embedding}. 

We pick two isometric $n$-holed spheres with the order $n$ rotation symmetry. Similarly to $3$-holed spheres. we call a boundary component of the $n$-holed spheres a {\em cuff} and the common perpendicular between two neighboring cuffs a {\em seam}. (See Figure \ref{fig_exp_1_1}.) The two spheres are attached along their boundary components in the order shown in Figure \ref{fig_exp_1_1}. The twist parameter at all the boundary components are the same to give the $\Gamma(2,n)$ surface the order $n$ rotation symmetry frome the $n$-holed spheres. 

\begin{figure}[htbp]
		\centering
		\includegraphics{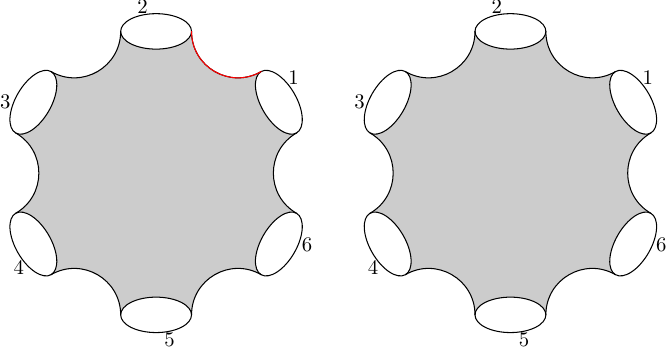}
		\caption{$\Gamma(2,n)$ surface. The red segment is a seam. Two cuffs with the same label are attached together. }
		\label{fig_exp_1_1}
\end{figure}

\subsection{The symmetry of $\Gamma(2,n)$ surface} 

The symmetric group of the $\Gamma(2,n)$ surface is $\mathbb{Z}^2\oplus \mathbb{Z}^n$. We denote the order $n$ and order $2$ generators of the group by $\sigma$ and $\tau$ respectively. $\sigma$ maps each $n$-holed sphere to itself, and on each $n$-holed sphere, $\sigma$ is the order $n$ rotation, while $\tau$ is the order two rotation exchanges the two $n$-holed spheres. 

Assume $\Sigma$ is a $\Gamma(2,n)$ surface, then $\Sigma/\langle\tau\rangle = S^2(2,2,\dots,2,2)$, and $(\Sigma/\langle\tau\rangle)/\langle\sigma\rangle = S^2(2,2,n,n)$. 

We assume 
\[
		\pi:\Sigma\to\Sigma/\langle\tau\rangle
\]
to be the double cover induced by $\tau$ and 
\[
		\pi':\Sigma/\langle\tau\rangle\to(\Sigma/\langle\tau\rangle)/\langle\sigma\rangle
\]
is the branch cover induced by $\sigma$ on the orbifold. 

Here is the diagram of the covering: 

\begin{equation}
		\xymatrix{
				\Sigma \ar[r]^{\pi} && S^2(2,2,\dots,2) \ar[r]^{\pi'}& S^2(2,2,n,n)
		}. 
		\label{for_corver_pre}
\end{equation}

On the orbifold $S^2(2,2,n,n)$, there is a branched double covering $\pi''$ from $S^2(2,2,n,n)$ to $S^2(2,2,2,n)$ (See Figure \ref{fig_22nn_222n_1}). 
\begin{figure}[htbp]
		\centering
		\includegraphics{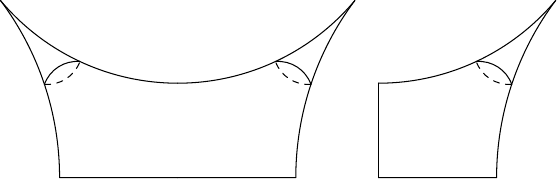}
		\caption{}
		\label{fig_22nn_222n_1}
\end{figure}
Then we can extend the Diagram (\ref{for_corver_pre}) longer:
\begin{equation}
		\xymatrix{
				\Sigma \ar[r]^{\pi} && S^2(2,2,\dots,2) \ar[r]^{\pi'}& S^2(2,2,n,n)\ar[r]^{\pi''} & S^2(2,2,2,n)
		}. 
		\label{for_corver_pre_2}
\end{equation}

\subsection{The geometry of $\Gamma(2,n)$ surface} 
\label{sec_geo_orbi}

The $\Gamma(2,n)$ surface consists of two isometric $n$-holed spheres (Figure \ref{fig_exp_1_1}). The geometric structure of the surface is determined by two parameters similar to the Fenchel-Nielsen parameters.
One of the parameters is the length of the cuffs (denoted $2c$), 
the other is the ``twist parameter'', the following is its formal definition. This parameter (denoted $t$) is the distance between two end points of seams on a cuff. The two seams are required to be in different $n$-holed spheres, and both seams' end points are in the same two cuffs. By this definition, $t\le 2c/2=c$. (See Figure \ref{fig_twist}. In this Figure, the circle $c_1$, $c_2$ are cuffs of a $\Gamma(2,n)$ surface, $AA'$ is a seam in one $n$-holed spheres and $BB'$ is a seam in the other. Then $t$ is the distance between the points $A$ and $B$ on the cuff)

\begin{figure}[htbp]
		\centering
		\includegraphics{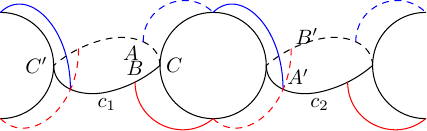}
		\caption{}
		\label{fig_twist}
\end{figure}

The length of the seams is denoted by $s$. We give a relation between $s$ and $c$ that will be used in latter calculation. Each $n$-holed sphere of the $\Gamma(2,n)$ surface consists of two isometric right-angled $2n$-polygons with order $n$ rotation symmetry. By the definition of seam and cuff, the edge length of the $2n$-polygon is $s$ or $c$. If an edge's length is $s$, then its neighboring edges' length is $c$; If an edge's length is $c$, then its neighboring edges' length is $s$. 

By connecting the center of the polygon and the mid-points of two neighboring edges, we obtained a trirectangle $OABC$ (Figure \ref{fig_n_gon}) with $AB=s/2$, $BC = c/2$, $\angle O = \pi/n$ and $\angle A = \angle B =\angle C =\pi/2$. Then by formula 2.3.1(i) in \cite[p.471]{buser2010geometry}, we have
\begin{equation}
		\cosh \frac{s}{2} \cosh \frac{c}{2} = \cos \frac{\pi}{n}.
		\label{for_cs}
\end{equation}

\begin{figure}[htbp]
		\centering
		\includegraphics{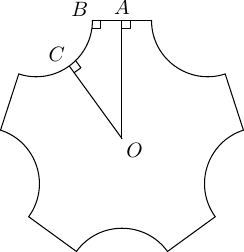}
		\caption{}
		\label{fig_n_gon}
\end{figure}

Now we begin to describe the geometric structures of the orbifolds $S^2(2,2,\dots,2)$, $S^2(2,2,n,n)$ and $S^2(2,2,2,n)$ induced from the $\Gamma(2,n)$ surface. 

An $n$-holed sphere in the construction of the $\Gamma(2,n)$ surface is a fundamental domain of the double branched cover $\pi$. On each cuff, there is a pair of antipodal points that are the fixed points of $\pi$. Therefore the image of each cuff of the $\Gamma(2,n)$ surface is a segment on $S^2(2,2,\dots,2)$ with length $c$. End points of the segment are the singular points. For the seams, since a seam is fully contained in a fundamental domain, the image of a seam is the common perpendicular of two neighboring (images of) cuffs on $S^2(2,2,\dots,2)$ with length $s$. Finally we consider the twist parameter $t$. Since $\pi$ is a locally-isometric double cover, the two seams in $\Gamma(2,n)$ surface connecting the same two cuffs are mapped to the same curve in $S^2(2,2,\dots,2)$. (In Figure \ref{fig_twist}, $AA'$ and $BB'$ are the two seams connecting the same two cuffs, they are mapped to the same curve in $S^2(2,2,\dots,2)$ by $\pi$. ) Therefore, the mid-point between the roots of the two seams ($A$, $B$ in Figure \ref{fig_twist}) is the singular point of $\pi$. We let $C$ in Figure \ref{fig_twist} be the mid-point between $A$ and $B$. Then $AC=BC = t/2$. 

Figure \ref{fig_orbi_geo_1} is the orbifold $S^2(2,2,\dots,2)$ with the geometric structure induced by the $\Gamma(2,n)$ surface and $\pi$. Here $AA'$ is the image of the seam, $|AA'|=s$. $C$, $C'$ (and other blue points) are singular points. The segment $CC'$ is the image of a cuff and $|CC'| = c$. Then $|AC| = t/2$. 

Next we consider the orbifold $S^2(2,2,n,n)$. In Figure \ref{fig_orbi_geo_1}, the four regions seperated by the red lines are the fundamental domains of the $n$-covering map $\pi'$. Here $O$ and $O'$ are the centers of the regular $2n$-polygon, $D$ is the mid-point of $AA'$. $ODO'$ is a red line and other red lines are $ODO'$'s image by the order-$n$ rotation. 

\begin{figure}[htbp]
		\centering
		\includegraphics{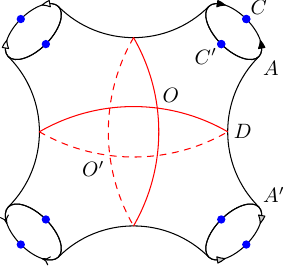}
		\caption{}
		\label{fig_orbi_geo_1}
\end{figure}

Figure \ref{fig_orbi_geo_2} is the $S^2(2,2,n,n)$ orbifold with geometric structure induced from $\Gamma(2,n)$ surface by covering map $\pi$ and $\pi'$. In Figure \ref{fig_orbi_geo_2}, the point $A$ is the image of the $A$ in Figure \ref{fig_orbi_geo_1} and so are other points labelled by the same letter. Thus $C$ and $C'$ are the order $2$ singular points, $O$ and $O'$ are the order $n$ singular points. Then $CC'$ is the image of the cuffs and $|CC'| = c$. $AD$ is the image of the seams and $|AD| = s/2$. And therefore $|AC| = t/2$. 
\begin{figure}[htbp]
\centering
\begin{minipage}[t]{0.45\textwidth}

		\centering
		\includegraphics{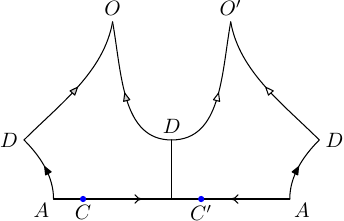}
		
		\caption{\label{fig_orbi_geo_2}}
\end{minipage}
\begin{minipage}[t]{0.45\textwidth}
		\centering
		\includegraphics{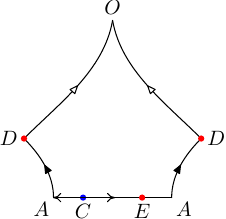}
		\caption{}
		\label{fig_orbi_geo_3}
\end{minipage}
\end{figure}

Finally we consider the orbifold $S^2(2,2,2,n)$. In Figure \ref{fig_orbi_geo_2}, one of the two pentagons is a fundamental domains of the double-covering map $\pi''$. The two fixed point of the covering map are $D$ and the mid-point of $CC'$.

Figure \ref{fig_orbi_geo_3} is the $S^2(2,2,2,n)$ orbifold with geometric structure induced from $\Gamma(2,n)$ surface by covering map $\pi$, $\pi'$ and $\pi''$. In Figure \ref{fig_orbi_geo_3}, the point $A$ is the image of the $A$ in Figure \ref{fig_orbi_geo_2} and so are other points labelled by the same letter except the point $E$. $E$ is the image of the mid-point of $CC'$ of Figure \ref{fig_orbi_geo_2}. $C$, $D$ and $E$ are the order $2$ singular points, $O$ is the order $n$ singular point. Then $CE$ is the image of the cuffs and $|CE| = c/2$. $AD$ is the image of the seams and $|AD| = s/2$. And therefore $|AC| = t/2$. 

\section{The intersections of the systoles}
\label{sec_int}

Here is a basic and very useful claim:

\begin{claim}
		Two systoles intersect at most once. 
		\label{claim_basic}
\end{claim}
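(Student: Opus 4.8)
The plan is a cut-and-paste surgery together with a curvature argument. Suppose, for contradiction, that two distinct systoles $\alpha$ and $\beta$ on the surface $\Sigma$ meet in at least two points. Being distinct simple closed geodesics they cross transversally in finitely many points, and since there are at least two of them the points of $\alpha\cap\beta$ divide $\alpha$ into at least two subarcs; hence some subarc $a\subset\alpha$ joining two \emph{consecutive} intersection points $p,q$ has length $\ell(a)\le \sys(\Sigma)/2$, and by consecutiveness $a$ meets $\beta$ only at $p$ and $q$. The points $p,q$ cut $\beta$ into two subarcs of total length $\sys(\Sigma)$, so one of them, $b$, satisfies $\ell(b)\le \sys(\Sigma)/2$. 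Since $b\subset\beta$ and $a\cap\beta=\{p,q\}$ we get $a\cap b=\{p,q\}$, and as $a$ and $b$ are each simple the concatenation $\mu:=a\ast b$ is a piecewise-geodesic \emph{embedded} closed curve with $\ell(\mu)=\ell(a)+\ell(b)\le \sys(\Sigma)$. Moreover $\mu$ has a genuine corner at each of $p,q$: transversality forces $T_p\alpha\neq T_p\beta$, so the incoming direction of $\mu$ (tangent to $\alpha$) and its outgoing direction (tangent to $\beta$) are neither equal nor opposite.

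Next I would argue that $\mu$ cannot bound an embedded disc. If it did, that disc would lift isometrically to $\mathbb{H}^2$ and its boundary would become a geodesic bigon, i.e.\ two geodesic segments sharing both endpoints, which is impossible since two points of $\mathbb{H}^2$ lie on a unique geodesic (equivalently, Gauss--Bonnet would force the two interior angles of the bigon to sum to a negative number). Hence $\mu$ is essential, so it is freely homotopic to a closed geodesic $\gamma$ with $\ell(\gamma)\le\ell(\mu)\le\sys(\Sigma)$; and since $\mu$ is not a smooth closed geodesic, negative curvature makes the geodesic representative \emph{strictly} shorter, $\ell(\gamma)<\sys(\Sigma)$. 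This contradicts the fact that $\sys(\Sigma)$ is a lower bound for the length of every closed geodesic on $\Sigma$. Therefore $|\alpha\cap\beta|\le 1$.

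I expect the only delicate points, which I would spell out rather than wave through, to be: (i) the choice of $a$ and $b$ making $\mu$ genuinely embedded — this is precisely where one needs $a$ to run between \emph{consecutive} intersection points, so that its interior is disjoint from $\beta$; and (ii) the step from ``$\mu$ essential but not geodesic'' to ``its geodesic representative is strictly shorter'', which uses strict convexity of length in negative curvature, together with the elementary observation that two distinct geodesics through a common point have distinct tangent lines, so that the two corners of $\mu$ are genuine. The remaining steps are routine bookkeeping with lengths.
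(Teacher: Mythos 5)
The paper offers no proof of Claim \ref{claim_basic}: it is stated as a ``basic and very useful claim'' and used as a black box, so there is nothing in the source to compare your argument against line by line. Your cut-and-paste proof is the classical one and it is correct: choosing $a$ between \emph{consecutive} intersection points is exactly what makes $\mu=a\ast b$ embedded, the transversality of two distinct closed geodesics at every common point gives the genuine corners, the lifted geodesic bigon in $\mathbb{H}^2$ rules out $\mu$ being null-homotopic, and strict length-minimization of the geodesic representative in its free homotopy class yields a closed geodesic $\gamma$ with $\ell(\gamma)<\ell(\mu)\le\sys(\Sigma)$. The only point I would tighten is the final sentence: this paper defines the systole as the shortest \emph{simple} closed geodesic, so ``$\sys(\Sigma)$ is a lower bound for the length of every closed geodesic'' is not literally the definition in force. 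To close the loop you should either invoke the standard fact that the shortest closed geodesic of a closed hyperbolic surface is automatically simple, or---more cheaply in your setup---note that $\mu$ is an embedded essential curve, hence its geodesic representative $\gamma$ is itself simple, so $\ell(\gamma)<\sys(\Sigma)$ contradicts the definition as stated. With that one observation added, the proof is complete.
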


A key observation is crucial in proving the three Lemmas in this Section:

\begin{obs}
		Any simple closed curve on $S^2(2,2,\dots,2)$ (or $S^2(2,2,n,n)$ or $S^2(2,2,2,n)$) is seperating. Therefore if two simple closed curves on $S^2(2,2,\dots,2)$ (or $S^2(2,2,n,n)$ or $S^2(2,2,2,n)$) intersect each other, then they intersect at least twice. 
		\label{obs_key}
\end{obs}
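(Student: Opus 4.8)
\textbf{Proof proposal for Observation \ref{obs_key}.}
The plan is to reduce everything to the Jordan curve theorem on the $2$-sphere. The first step is to note that the underlying topological space of each of the three orbifolds $S^2(2,2,\dots,2)$, $S^2(2,2,n,n)$, $S^2(2,2,2,n)$ is the $2$-sphere $S^2$: the orbifold structure only decorates finitely many isolated points with cone angles and does not alter the underlying surface (this is visible directly from the construction in Section \ref{sec_construct}, where each orbifold is built from two polygons glued along their boundary). Hence a simple closed curve on any of these orbifolds is, forgetting the orbifold points, a simple closed curve in $S^2$. By the Jordan curve theorem it bounds, and $S^2$ minus the curve has exactly two connected components (each an open disc). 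This proves the first assertion: every simple closed curve is separating.

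For the second assertion, let $\alpha$ and $\beta$ be two simple closed curves on one of these orbifolds with $\alpha\cap\beta\neq\emptyset$. In the application $\alpha$ and $\beta$ will be distinct simple closed geodesics of a hyperbolic orbifold, hence meet transversally in finitely many points; in general one may first put them in transverse position by a small isotopy supported away from the cone points, which does not change the parity of the number of intersection points. Write $S^2\setminus\alpha = U_1\sqcup U_2$ with $U_1,U_2$ the two complementary discs. Parametrize $\beta$ as a loop; each time $\beta$ crosses $\alpha$ transversally it passes from one of $U_1,U_2$ to the other, and since $\beta$ is a closed loop it must return to the component it started in. Therefore the number of crossings is even, and being nonzero it is at least $2$.

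Equivalently, one can argue homologically: since $H_1(S^2;\mathbb{Z}/2)=0$, the mod-$2$ algebraic intersection number $[\alpha]\cdot[\beta]$ vanishes, and the geometric intersection number has the same parity, so $\alpha$ and $\beta$ meet in an even number of points. I expect no real obstacle here — the only point requiring a word of care is that the statement is about the underlying topological surface and that transversality (or a parity-preserving perturbation) is available, which is immediate for simple closed geodesics.

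\begin{remark}
Combined with Claim \ref{claim_basic}, this observation will be used to constrain how the systoles of $\Sigma$ project to the quotient orbifolds: a systole whose image is a simple closed curve on $S^2(2,2,\dots,2)$ (or the other two orbifolds) could intersect another such image at most once by Claim \ref{claim_basic}, yet at least twice by Observation \ref{obs_key}, forcing the images to be disjoint or to fail to be embedded simple closed curves — which is exactly the mechanism that pushes the systoles onto arcs through the singular points in Section \ref{sbsc_image}.
\end{remark}
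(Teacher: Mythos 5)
Your proof is correct and matches what the paper does: the paper offers no argument at all for Observation \ref{obs_key}, treating it as immediate from the fact that the underlying space of each orbifold is $S^2$, and your Jordan-curve/parity argument is exactly the standard justification being implicitly invoked. Your added caveat about transversality (handled because the curves in question are distinct geodesics) is a point the paper glosses over, but it does not change the substance.
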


Claim \ref{claim_basic} and Observation \ref{obs_key} are essential tools for proving the following three Lemmas. These Lemmas are important tools to characterize the systoles on $\Gamma(2,n)$ surface.

\begin{lemma}
		If $\Sigma$ is a $\Gamma(2,n)$ surface, then the image of a systole of $\Sigma$ on $S^2(2,2,\dots,2)$ by $\pi$ does not intersect itself at any regular point of the orbifold. 

		The images of two systoles of $\Sigma$ on $S^2(2,2,\dots,2)$ by $\pi$ do not intersect at any regular point of the orbifold. 
		\label{lem_hyp_ell}
\end{lemma}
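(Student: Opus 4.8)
The plan is to argue by contradiction, using the interplay between the involution $\tau$, the covering $\pi$, and the two combinatorial tools already at hand: Claim \ref{claim_basic} (two systoles meet at most once) and Observation \ref{obs_key} (any closed curve on $S^2(2,2,\dots,2)$ is separating, so two such curves that cross must cross at least twice). First I would record that $\pi$ is a locally isometric double branched cover with deck group $\langle\tau\rangle$, so a systole $\gamma$ of $\Sigma$ is either (i) disjoint from its translate $\tau\gamma$, or (ii) equal to $\tau\gamma$ as a set, or (iii) it meets $\tau\gamma$; in case (iii), since $\gamma$ and $\tau\gamma$ are both systoles, Claim \ref{claim_basic} forces $|\gamma\cap\tau\gamma|\le 1$, and because the intersection set is $\tau$-invariant this single point must be a fixed point of $\tau$, i.e.\ a branch point. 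I would then analyze what the image $\pi(\gamma)$ looks like in each case: in case (i) $\pi$ is injective on $\gamma$ and $\pi(\gamma)$ is an embedded closed geodesic arc missing the singular locus; in case (ii) $\gamma$ is $\tau$-invariant and $\pi(\gamma)$ is an embedded arc (or circle) whose endpoints, if any, are singular points but whose interior is embedded and regular; in case (iii) $\pi(\gamma)$ is a circle with exactly one singular point on it and otherwise embedded. In none of these cases does $\pi(\gamma)$ have a regular self-intersection, which is the first assertion.

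For the second assertion, suppose $\gamma_1,\gamma_2$ are systoles of $\Sigma$ and that $\pi(\gamma_1)$ and $\pi(\gamma_2)$ cross at a regular point $p$ of the orbifold. The preimage $\pi^{-1}(p)$ consists of two points $\tilde p_1,\tilde p_2 = \tau\tilde p_1$, each a regular point of $\Sigma$, and at each of them one of $\gamma_1,\tau\gamma_1$ meets one of $\gamma_2,\tau\gamma_2$ transversally. So among the four systoles $\gamma_1,\tau\gamma_1,\gamma_2,\tau\gamma_2$ there is a pair meeting at $\tilde p_1$ and a pair meeting at $\tilde p_2$; applying $\tau$ interchanges these, so in fact each of the crossing pairs that occurs upstairs crosses at both $\tilde p_1$ and $\tilde p_2$ (since $\tau$ sends one crossing to the other and the systoles are permuted by $\tau$). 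That gives a pair of systoles on $\Sigma$ meeting in $\ge 2$ points, contradicting Claim \ref{claim_basic} — unless the pair in question is $\{\gamma, \tau\gamma\}$ for a single $\gamma$, i.e.\ a systole crossing its own translate. But a $\tau$-translate pair can only meet at branch points (as in case (iii) above), so it cannot account for a crossing over the regular point $p$. Hence no such $p$ exists.

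The main obstacle I anticipate is the bookkeeping in the last step: one must be careful to enumerate correctly which of the (possibly coinciding) curves among $\gamma_1,\tau\gamma_1,\gamma_2,\tau\gamma_2$ are actually distinct, and to handle the degenerate subcases $\gamma_1=\gamma_2$, $\gamma_1=\tau\gamma_2$, $\gamma_1=\tau\gamma_1$, etc., separately, showing that in each a regular crossing downstairs would force either a double crossing of two distinct systoles (contradicting Claim \ref{claim_basic}) or a regular crossing of a $\tau$-invariant curve with something, which then unfolds to a forbidden configuration. Observation \ref{obs_key} is needed precisely to rule out the alternative escape route where one might hope the two images cross only once: on $S^2(2,2,\dots,2)$ a single transverse crossing of two embedded circles is impossible for parity/separation reasons, so either the images cross $\ge 2$ times (lifting to a contradiction with Claim \ref{claim_basic}) or they do not cross at a regular point at all. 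I would close by remarking that the identical argument, verbatim, applies with $S^2(2,2,\dots,2)$ replaced by $S^2(2,2,n,n)$ or $S^2(2,2,2,n)$ and $\pi$ replaced by the appropriate composite cover, since Observation \ref{obs_key} is stated for all three orbifolds.
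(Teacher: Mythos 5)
Your treatment of the first assertion is sound and is essentially the paper's own argument in different packaging: a self-intersection of $\pi(\gamma)$ at a regular point produces two distinct points $x$ and $\tau x$ of $\gamma\cap\tau\gamma$, i.e.\ two crossings of the distinct equal-length systoles $\gamma$ and $\tau\gamma$, contradicting Claim \ref{claim_basic}; your trichotomy (i)--(iii) is a clean way to organize this.

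The second assertion, however, has a genuine gap. The step ``applying $\tau$ interchanges these, so each of the crossing pairs crosses at both $\tilde p_1$ and $\tilde p_2$'' is false: $\tau$ carries a crossing of the pair $(A,B)$ at $\tilde p_1$ to a crossing of the pair $(\tau A,\tau B)$ at $\tilde p_2$, and $(\tau A,\tau B)$ is a \emph{different} pair unless both curves are $\tau$-invariant. So a single regular crossing downstairs yields only one crossing for each of two different pairs upstairs, which is perfectly consistent with Claim \ref{claim_basic}. Your fallback via Observation \ref{obs_key} does not close this either: with two regular crossings $p,q$ of $\pi(\gamma_1)$ and $\pi(\gamma_2)$ one gets four crossing points upstairs distributed among the four pairs $(\gamma_1\ \text{or}\ \tau\gamma_1,\ \gamma_2\ \text{or}\ \tau\gamma_2)$, and $\tau$-equivariance only forces the distribution to be symmetric; the pattern in which every pair receives exactly one crossing is combinatorially allowed and contradicts nothing. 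Excluding it requires determining which double covers of $\pi(\gamma_1)\cup\pi(\gamma_2)$ can actually occur as restrictions of $\pi$, which is precisely what the paper does: it splits into the cases circle/circle, segment/circle, segment/segment according to whether each image is a circle or an arc between branch points, enumerates the admissible covers case by case, and in the segment cases argues via the connectivity of $\pi^{-1}(\pi(\gamma_i))$ and the injectivity of $\pi_*$ on fundamental groups rather than by counting alone. Those are the ingredients your sketch is missing; the ``bookkeeping'' you defer at the end is in fact the entire content of the proof.
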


\begin{lemma}
		If $\Sigma$ is a $\Gamma(2,n)$ surface, then the image of a systole of $\Sigma$ on $S^2(2,2,n,n)$ by $\pi'\circ\pi$ does not intersect itself at any regular point of the orbifold. 

		The images of two systoles of $\Sigma$ on $S^2(2,2,n,n)$ by $\pi'\circ\pi$ do not intersect at any regular point of the orbifold. 
		\label{lem_hyp_ell_n}
\end{lemma}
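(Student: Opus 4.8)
The plan is to mimic the proof of Lemma \ref{lem_hyp_ell}, exploiting the fact that the relevant covering is branched at isolated points and is locally an isometry away from the branch set, together with Claim \ref{claim_basic} and Observation \ref{obs_key}. First I would recall that $\pi' \circ \pi : \Sigma \to S^2(2,2,n,n)$ is a branched cover of degree $2n$, locally isometric away from the preimages of the four cone points, and that a systole $\gamma$ of $\Sigma$, being a simple closed geodesic, has image $\bar\gamma = (\pi'\circ\pi)(\gamma)$ which is a closed geodesic (in the orbifold sense) that can only fail to be embedded at the cone points. So the only issue is a self-intersection of $\bar\gamma$ at a regular point, or an intersection of $\bar\gamma_1, \bar\gamma_2$ at a regular point; I must rule both out.

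Next, suppose for contradiction that $\bar\gamma$ crosses itself transversally at a regular point $p$. Since $\pi'\circ\pi$ is a local isometry near $p$, the two local branches of $\bar\gamma$ through $p$ lift, near each of the $2n$ preimages of $p$, to two local geodesic arcs of $\gamma$; because $\gamma$ is simple, at no preimage of $p$ can these two arcs coincide, so over $p$ the curve $\gamma$ passes through $2n$ distinct double points — wait, more carefully: a single transversal self-crossing of $\bar\gamma$ at $p$ would force $\gamma$ to have a self-crossing at each point of $(\pi'\circ\pi)^{-1}(p)$, contradicting simplicity of $\gamma$. This already handles the self-intersection statement. For the two-systole statement I would argue the same way with $\bar\gamma_1$ and $\bar\gamma_2$: a transversal intersection at a regular point $p$ lifts to transversal intersections of $\gamma_1$ and $\gamma_2$ at every preimage of $p$; since by Claim \ref{claim_basic} two systoles meet at most once, $(\pi'\circ\pi)^{-1}(p)$ must be a single point, i.e. $p$ is totally ramified of index $2n$ — but the total ramification points are the two order-$n$ cone points $O, O'$, not regular points, a contradiction. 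One must also check the case where $\bar\gamma_1, \bar\gamma_2$ are tangent rather than transversal at $p$; here I would use Observation \ref{obs_key}, or rather its underlying mechanism: perturbing within the orbifold, or simply noting that geodesics in a hyperbolic surface that are tangent at a point coincide, so tangency of the images forces $\bar\gamma_1 = \bar\gamma_2$ and hence (lifting) $\gamma_1 = \gamma_2$ up to the deck action, which is excluded.

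The main obstacle I anticipate is bookkeeping about the cone points: a priori a ``self-intersection at a regular point'' of $\bar\gamma$ could be an artifact of the curve passing through a cone point elsewhere and the lift behaving differently, so I need the local-isometry statement to be genuinely local and to be careful that the lifts I produce are honest subarcs of the \emph{same} systole rather than of different components of the preimage. The cleanest way to organize this is to fix one point $x \in \gamma$ with $(\pi'\circ\pi)(x) = p$, follow $\gamma$ forward until it returns near $x$, and track its image; since $\bar\gamma$ is embedded away from cone points precisely when $\gamma$ is embedded (away from the deck orbit), the self-crossing of $\bar\gamma$ at the regular point $p$ translates directly into either a self-crossing of $\gamma$ or a crossing of $\gamma$ with a deck translate $\phi(\gamma)$; in the latter case $\phi(\gamma)$ is again a systole, so Claim \ref{claim_basic} applies. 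Everything else is a routine transcription of the argument already used for $\pi$ in Lemma \ref{lem_hyp_ell}, with the degree $2$ replaced by $2n$ and the two branch points replaced by the four cone points of $S^2(2,2,n,n)$.
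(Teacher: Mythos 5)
Your proposal departs from the paper's route. The paper does not analyze the degree-$2n$ cover directly: it reduces to Lemma \ref{lem_hyp_ell} via the intermediate orbifold $S^2(2,2,\dots,2)$ --- an intersection of $\pi'\pi(\alpha)$ with itself (or with the image of another systole) at a regular point of $S^2(2,2,n,n)$ lifts to an intersection at a regular point of $S^2(2,2,\dots,2)$ between $\pi(\alpha)$ and either itself or the image of another systole, and both possibilities are exactly what Lemma \ref{lem_hyp_ell} already rules out. Your direct analysis of $\pi'\circ\pi:\Sigma\to S^2(2,2,n,n)$ is a legitimate alternative in principle, but as written it has a genuine gap.

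The gap is in how you derive a contradiction from a crossing at a regular point $p$. You assert that a transversal self-crossing of $\bar\gamma$ at $p$ ``would force $\gamma$ to have a self-crossing at each point of $(\pi'\circ\pi)^{-1}(p)$.'' That is false: at a given preimage $\tilde p$ the two local branches of $\bar\gamma$ lift to arcs that may lie in two \emph{different} components of $(\pi'\circ\pi)^{-1}(\bar\gamma)$, namely in $\gamma$ and in a deck translate $\phi(\gamma)\neq\gamma$, in which case $\gamma$ has no self-crossing at $\tilde p$. You acknowledge this in your last paragraph, but the fallback ``$\phi(\gamma)$ is again a systole, so Claim \ref{claim_basic} applies'' does not close the argument: Claim \ref{claim_basic} says two systoles meet \emph{at most} once, so a single crossing of $\gamma$ with $\phi(\gamma)$ is not yet a contradiction --- you must exhibit a \emph{second} intersection point of the same pair. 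Similarly, the inference ``two systoles meet at most once, hence $(\pi'\circ\pi)^{-1}(p)$ is a single point'' is a non sequitur: the $2n$ crossings over $p$ are distributed among various pairs of deck translates of $\gamma$, and nothing in your argument forces one pair to account for two of them. Producing that second intersection is precisely the nontrivial content of the paper's proof of Lemma \ref{lem_hyp_ell}: it requires the case analysis over the possible shapes of the images (closed curve versus segment between branch points), Observation \ref{obs_key} (every simple closed curve on the sphere orbifold is separating, so intersecting ones meet at least twice), a connectivity argument in the segment--segment case, and a $\pi_1$-injectivity argument in one subcase. Your proposal never engages with any of this, so the core of the proof is missing.
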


\begin{lemma}
		If $\Sigma$ is a $\Gamma(2,n)$ surface, then the image of a systole of $\Sigma$ on $S^2(2,2,2,n)$ by $(\pi''\circ\pi'\circ\pi$ does not intersect itself at any regular point of the orbifold. 

		The images of two systoles of $\Sigma$ on $S^2(2,2,2,n)$ by $\pi''\circ\pi'\circ\pi(\Sigma)$ do not intersect at any regular point of the orbifold. 
		\label{lem_hyp_ell_n_2}
\end{lemma}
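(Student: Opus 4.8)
The plan is to reduce this to Lemma \ref{lem_hyp_ell_n} by pushing forward one more step along the double branched covering $\pi''\colon S^2(2,2,n,n)\to S^2(2,2,2,n)$, while keeping careful track of its branch locus. \textbf{Step 1: locating the branch points.} From the description in Section \ref{sec_geo_orbi} (Figure \ref{fig_orbi_geo_3}), $\pi''$ is the quotient of $S^2(2,2,n,n)$ by the orbifold involution $\iota$ whose only fixed points are $D$ and the midpoint of $CC'$; these two points become the order-$2$ cone points of $S^2(2,2,2,n)$ denoted $D$ and $E$, while the cone points $C,C'$ and $O,O'$ of $S^2(2,2,n,n)$ are identified in pairs with the cone points $C$ and $O$ of $S^2(2,2,2,n)$. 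Hence the branch locus of $\pi''$ is contained in the set of cone points, so every regular point $x$ of $S^2(2,2,2,n)$ has preimage $(\pi'')^{-1}(x)=\{x_1,x_2\}$ consisting of two distinct \emph{regular} points of $S^2(2,2,n,n)$, with $\iota x_1=x_2$.

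\textbf{Step 2: lifting an alleged crossing.} Write $q=\pi'\circ\pi$ and, for a systole $\gamma$ of $\Sigma$, $\delta=q(\gamma)\subset S^2(2,2,n,n)$, so that $(\pi''\circ\pi'\circ\pi)(\gamma)=\pi''(\delta)$. Suppose $\pi''(\delta)$ has a transverse self-intersection at a regular point $x$, and let $x_1,x_2$ be as in Step 1. Since $(\pi'')^{-1}(\pi''(\delta))=\delta\cup\iota\delta$, the two branches of $\pi''(\delta)$ through $x$ lift to the branches of $\delta\cup\iota\delta$ through $x_1$ and through $x_2$. If both branches through $x_1$ (equivalently, through $x_2$) belong to $\delta$, then $\delta$ has a transverse self-intersection at the regular point $x_1$, contradicting Lemma \ref{lem_hyp_ell_n}. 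Otherwise the branch of $\delta$ and the branch of $\iota\delta$ through $x_1$ are distinct, so $\delta$ and $\iota\delta$ cross transversally at $x_1$, and applying $\iota$ they also cross at $x_2$. Moreover $\delta\neq\iota\delta$: an $\iota$-invariant simple closed curve descends through $\pi''$ either to an embedded circle (when $\iota$ acts on it freely) or to an embedded arc joining $D$ and $E$ (when $\iota$ restricts to a reflection), and in neither case does the image have a self-intersection at a regular point. So we are left with two distinct simple closed curves $\delta,\iota\delta$ on $S^2(2,2,n,n)$ meeting transversally at the two regular points $x_1,x_2$, and by Observation \ref{obs_key} at an even number of points in all.

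\textbf{Step 3: excluding the split case.} This is the only genuine obstacle. The difficulty is that $\iota$ does \emph{not} lift to an isometry of $\Sigma$: the covering $\Sigma\to S^2(2,2,2,n)$ has degree $4n$, strictly larger than the order $2n$ of the symmetry group of $\Sigma$, hence it is not regular, and one cannot simply reproduce the Claim \ref{claim_basic} argument used for Lemmas \ref{lem_hyp_ell} and \ref{lem_hyp_ell_n}. The plan is to argue on $S^2(2,2,n,n)$ directly: pull $\delta$ and $\iota\delta$ back to $\Sigma$ along the regular covering $q$, so that $q^{-1}(\delta)=G\cdot\gamma$ is a union of at most $2n$ systoles, and (since $q$ is a local isometry and $\iota$ is an isometry of $S^2(2,2,n,n)$) identify $q^{-1}(\iota\delta)$ as a union of closed geodesics sharing no component with $q^{-1}(\delta)$; then count the transverse intersections $q^{-1}(\delta)\cap q^{-1}(\iota\delta)$ fibrewise over $x_1$ and $x_2$, using that each of these regular points has $2n$ preimages through which both multicurves pass, and combine this count with Claim \ref{claim_basic} to force a contradiction. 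Verifying that this count actually overruns what Claim \ref{claim_basic} allows — which requires understanding precisely how the non-regular cover $\pi''$ distributes these intersections among the systoles covering $\delta$, and controlling the behaviour at the cone points that a systole's image may pass through — is where the real work lies. Finally, the "two systoles" assertion is handled by the same three steps applied to a pair $\gamma_1,\gamma_2$: one first uses the self-intersection statement just proved to know that $(\pi''\circ\pi'\circ\pi)(\gamma_i)$ are embedded, and then splits the analysis into the cases "$\pi''\circ\pi'\circ\pi$-images coincide" (their preimages are $\iota$-twins and meet only at cone points), "the lifts cross at a common $x_i$" (contradicting Lemma \ref{lem_hyp_ell_n}), and "the lifts are $\iota$-twins meeting at regular points" (treated exactly as in Step 3).
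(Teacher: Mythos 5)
Your Steps 1 and 2 are correct and are exactly the reduction the paper intends (its proof of Lemma \ref{lem_hyp_ell_n_2} is declared to be "exactly the same" as that of Lemma \ref{lem_hyp_ell_n}): a self-crossing of $\pi''(\delta)$ at a regular point forces either a self-crossing of $\delta$ at a regular point, or a crossing of $\delta$ with the other lift $\iota\delta$ at the two regular points $x_1,x_2$. The problem is Step 3, which is where your proof actually has to close, and it doesn't. You assert that $\iota$ does not lift to an isometry of $\Sigma$ because the covering $\Sigma\to S^2(2,2,2,n)$ has degree $4n$ while "the symmetry group of $\Sigma$" has order $2n$. That inference is invalid: $\mathbb{Z}_2\oplus\mathbb{Z}_n$ is only the maximal $S^3$-extendable \emph{abelian} symmetry singled out in the paper, not the full isometry group of $\Sigma$ (for $n=3$ the surface is the Bolza surface, whose isometry group has order $96$, not $12$). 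In fact $\iota$ does lift. Writing $\phi\colon\pi_1^{orb}(S^2(2,2,n,n))\to\mathbb{Z}_n$ for the monodromy of $\pi'$ (value $\pm1$ on the meridians of $O,O'$, value $0$ on those of $C,C'$), the involution $\iota$ swaps $O\leftrightarrow O'$ and $C\leftrightarrow C'$, so $\phi\circ\iota_*=-\phi$ and $\iota_*$ preserves $\ker\phi$; hence $\iota$ lifts to $S^2(2,2,\dots,2)$. That lift in turn lifts to $\Sigma$ because the hyperelliptic cover $\pi$ corresponds to the kernel of the homomorphism $\pi_1^{orb}(S^2(2,2,\dots,2))\to\mathbb{Z}_2$ sending every order-$2$ generator to $1$, which is preserved by every orbifold self-map. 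So there is an isometry $\tilde\iota$ of $\Sigma$ with $\iota\delta=\pi'\circ\pi(\tilde\iota\gamma)$, and $\tilde\iota\gamma$ is a systole; the second statement of Lemma \ref{lem_hyp_ell_n} applied to the pair $\gamma,\tilde\iota\gamma$ immediately rules out a crossing of $\delta$ and $\iota\delta$ at a regular point. That one sentence is the whole of Step 3.

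Having denied yourself this move, you substitute a fibrewise intersection count of $q^{-1}(\delta)\cap q^{-1}(\iota\delta)$ over $x_1,x_2$ and explicitly defer its verification ("where the real work lies"). As written, therefore, the argument is not complete: the crucial case is reduced to a plan rather than a proof, and the plan is motivated by a false premise about non-regularity of the composite cover. The same gap propagates into your treatment of the two-systole statement, which leans on the same Step 3. Everything else — the location of the branch points of $\pi''$, the dichotomy at $x_1$, the observation that $\delta\neq\iota\delta$ — is fine and consistent with the paper.
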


The idea to prove these Lemmas is direct: using Claim \ref{claim_basic}. We assume that a systole's image on the orbifold has self-intersection or two systoles' images intersect each other at regular points. Then we prove that the lift of the images always contain two simple clsoed curve with equal length and at least two intersections. 

But the proofs are rather long, because we need to deal with all the possible shape of the images of a simple closed curve on $\Sigma$ by $\pi$ and the lifts of the curves' images case by case.

\begin{proof}[Proof of Lemma \ref{lem_hyp_ell}]
If $\alpha$ is a simple closed curve in $\Sigma$, $\pi(\alpha)$ has a self-intersection point $p$. Then $\pi^{-1}(p)$ consists of two points, both are the intersection points of $\pi^{-1}(\pi(\alpha))$. By the definition of double cover, $\pi^{-1}(\pi(\alpha))$ consists of either one curve or two curves with equal length. Since $\alpha$ is simple, $\pi^{-1}(\pi(\alpha))$ consists of two curves. These two curves intersect at least twice, therefore cannot be systole. 

We assume $\alpha$ and $\beta$ are two simple closed curves with equal length on $\Sigma$, $p$ is the intersection point of $\pi(\alpha)$ and $\pi(\beta)$. 

We recall that there are two type of order 2 isometric action on $S^1$. Therefore, the shape of $\pi(\alpha)$ and $\pi(\beta)$ has 2 possibilities: $S^1$ or a segment, whose endpoints are fixed points of $\pi$.

(a) If $\pi(\alpha)$ and $\pi(\beta)$ are simple closed curves, then $\pi(\alpha)$ intersects $\pi(\beta)$ at least twice by Observation \ref{obs_key}. 
Recall that there are two types of double cover of $S^1$, namely $S^1$ and $S^1\coprod S^1$. 
Then there are three types of double cover of $\pi(\alpha)\cup \pi(\beta)$, shown in Figure \ref{fig_double_cover}. 

In all the cases, $\alpha$ intersects $\beta$ at least twice, which contradicts to Claim \ref{claim_basic}. 

\begin{figure}[htbp]
		\centering
		\includegraphics{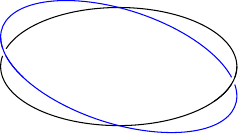}
		\caption{$\pi(\alpha)\cup\pi(\beta)$}
		\label{fig_double_cover_1}
\end{figure}

\begin{figure}[htbp]
		\centering
		\subfigure[]{
		\includegraphics{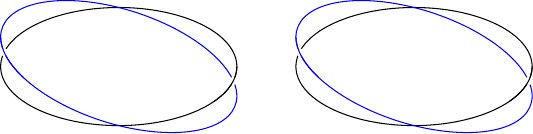}
}

\subfigure[]{
		\includegraphics{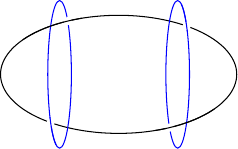}
}
\subfigure[]{
		\includegraphics{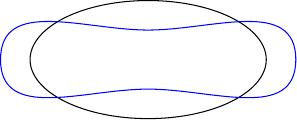}
}
		\caption{The double cover of $\pi(\alpha)\cup\pi(\beta)$}
		\label{fig_double_cover}
\end{figure}

(b) If $\pi(\alpha)$ is a segment while $\pi(\beta)$ is a simple closed curve (Figure \ref{fig_double_cover_1s}), then there are two types of the double cover of $\pi(\alpha)\cup\pi(\beta)$ shown in Figure \ref{fig_double_cover_1s_d}. 

\begin{figure}[htbp]
		\centering
		\includegraphics{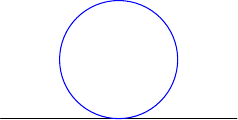}
		\caption{$\pi(\alpha)\cup\pi(\beta)$ (2)}
		\label{fig_double_cover_1s}
\end{figure}

\begin{figure}[htbp]
		\centering
		\subfigure[]{
		\includegraphics{double_cover-1.pdf}
		\label{fig_double_cover_1s_1}
		}
		\subfigure[]{
		\includegraphics{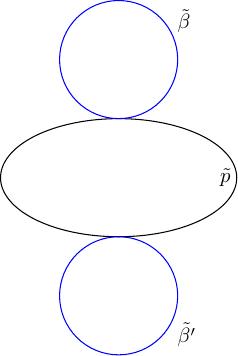}
		\label{fig_double_cover_1s_2}
		}
		\caption{The double cover of $\pi(\alpha)\cup\pi(\beta)$ (2)}
		\label{fig_double_cover_1s_d}
\end{figure}

If the double cover of $\pi(\alpha)\cup\pi(\beta)$ is the case shown in Figure \ref{fig_double_cover_1s_1}, then it is clear that the curve $\alpha$ and $\beta$ have at least two intersctions. Therefore, $\alpha$ and $\beta$ cannot be systoles.

If the double cover of $\pi(\alpha)\cup\pi(\beta)$ is the case shown in Figure \ref{fig_double_cover_1s_2}, we assume $\tilde{p}$ is one of the fix point of $\pi$ in Figure \ref{fig_double_cover_1s_2}. Therefore $\pi_*([\tilde{\beta}]) = \pi_*([\tilde{\beta'}])$ in $\pi_1(\pi(\Sigma), \pi(\tilde{p}))$. Here $[\tilde{\beta}]$ and $[\tilde{\beta'}]$ are elements of $\pi_1(\Sigma,\tilde{p})$ represented by $\tilde{\beta}$ and $\tilde{\beta'}$. It contradicts to the injectivity of $\pi_*$ ($\pi$ is a covering map). 

(c)  If both $\pi(\alpha)$ and $\pi(\beta)$ are segments (Figure \ref{fig_double_cover_2s}). then $|\pi^{-1}(\pi(\alpha))\cap \pi^{-1}(\pi(\beta))|\ge 2$ since the intersction point of $\pi(\alpha)$ and $\pi(\beta)$ is a regular point. However, both $\pi^{-1}(\pi(\alpha))$ and $ \pi^{-1}(\pi(\beta))$ are connected. Therefore $|a\cap b| \ge 2$, so that $\alpha$ and $\beta$ cannot be systole. 
\begin{figure}[htbp]
		\centering
		\includegraphics{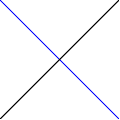}
		\caption{}
		\label{fig_double_cover_2s}
\end{figure}
\end{proof}

Furthermore, we have

\begin{proof}[Proof of Lemma \ref{lem_hyp_ell_n}]
		Recall that $\pi': S^2(2,2,\dots,2)\to S^2(2,2,n,n)$ is the covering map. If $\alpha$ is a systole of $\Sigma$ then by Lemma \ref{lem_hyp_ell}, $\pi(\alpha)$ has no self-intersection and won't intersect the image of another systole at regular points. Therefore, if $\pi'\pi(\alpha)$ has self-intersection at regular points, then it implies that either $\pi(\alpha)$ intersects itself or it intersects another lift of $\pi'\pi(\alpha)$. Therefore $\pi'\pi(\alpha)$ has no self-intersections.

		By exactly the same argument, we can prove that the images of two systoles of $\Sigma$ on $S^2(2,2,n,n)$ do not intersect at any regular point of the orbifold. 

\end{proof}

We omit the proof for Lemma \ref{lem_hyp_ell_n_2}, because it is exactly the same to the proof for Lemma \ref{lem_hyp_ell_n}.

If $\alpha$ is a systole on $\Sigma$ then $\pi''\circ\pi'\circ\pi(\alpha)$ is either a simple closed curve or a segment connecting two singular points. 

Now we begin to characterize and classify the image of systoles on $S^2(2,2,2,n)$. 

\section{The image of systoles on $S^2(2,2,2,n)$}
\label{sbsc_image}

\begin{lemma}
		The image of systoles won't pass the order $n$ singular point. 
		\label{lem_order_n}
\end{lemma}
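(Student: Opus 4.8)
The plan is to argue by contradiction: suppose a systole $\gamma$ of $\Sigma$ has image $\bar\gamma = \pi''\circ\pi'\circ\pi(\gamma)$ on $S^2(2,2,2,n)$ passing through the order-$n$ singular point $O$. I will use the full covering tower $\Sigma \to S^2(2,2,\dots,2) \to S^2(2,2,n,n) \to S^2(2,2,2,n)$ and lift $\bar\gamma$ back up one stage at a time, tracking lengths and intersection numbers. The key structural fact is that $O$ has a cone angle $2\pi/n$ downstairs, so a geodesic arc through $O$ is, in the orbifold sense, not ``straight'': when we pass to the cover $S^2(2,2,n,n)$ (where $\pi'$ is the order-$n$ branched cover with $O$ as a branch point), the preimage of $O$ is a single point $\tilde O$ with a full $2\pi$ neighborhood, and the two local half-arcs of $\bar\gamma$ at $O$ lift to two arcs meeting at $\tilde O$ whose exterior angle is strictly positive. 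Consequently the lift is not a geodesic near $\tilde O$; one can shorten it there, so its true geodesic representative is strictly shorter — but its length is an integer multiple (namely $n$, or $n$ over the order of the local stabilizer) of $\text{length}(\bar\gamma)$, contradicting that $\gamma$, which covers $\bar\gamma$ with the same multiplicity, is a systole.

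More carefully, I would split into the two cases from the remark just before Section \ref{sbsc_image}: either $\bar\gamma$ is a segment with endpoints at singular points, or $\bar\gamma$ is a simple closed curve. If $\bar\gamma$ is a simple closed curve through $O$, then since $O$ is the unique order-$n$ point and $\pi'$ is totally ramified there, $(\pi')^{-1}(\bar\gamma)$ — pulled back appropriately through $\pi''$ first — is a closed curve (or union of arcs) of total length $n\cdot\text{length}(\bar\gamma)$ passing through $\tilde O$ with $n$ strands, and the geodesic in that homotopy class rel nothing is strictly shorter because the cone point has been ``unfolded''; alternatively, a small surgery at $\tilde O$ that rounds the corner produces a shorter curve in the same free homotopy class on the cover, hence on $\Sigma$, contradicting minimality of the systole length. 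If $\bar\gamma$ is a segment through $O$, then $O$ is necessarily an endpoint of one of the two sub-arcs or an interior point; in either subcase, lifting through $\pi'$ again yields a shorter representative at $\tilde O$ after unfolding. Finally I would invoke Lemmas \ref{lem_hyp_ell}, \ref{lem_hyp_ell_n}, \ref{lem_hyp_ell_n_2} to ensure that the lifted curve is simple (so that its geodesic representative is an honest simple closed geodesic and its length is a legitimate comparison for the systole), and Claim \ref{claim_basic} to rule out degenerate intersection patterns.

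The cleanest way to package the ``unfolding'' step is to note that a geodesic on $\Sigma$ cannot pass through a point whose image downstairs is a cone point of angle less than $\pi$ on the relevant intermediate orbifold, because the deck group acts by rotations and the preimage of the cone point in $\Sigma$ (or the first orbifold where it becomes regular) sees the curve entering and leaving along directions that differ by a rotation of angle $2\pi k/n \ne \pi$, forcing a corner; a corner can always be shortened. Equivalently: the systole $\gamma$ upstairs, being a closed geodesic, is smooth, so its image can only pass through a singular point $p$ if the two incoming directions at $p$ are related by an element of the local stabilizer that sends one to the negative of the other — i.e. the stabilizer must contain a rotation by $\pi$, which the order-$n$ stabilizer does only when $n=2$, contradicting that $O$ has cone order $n > 2$ (and for $n=2$ the point is not an ``order $n$'' point in the sense of this lemma).

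The main obstacle I anticipate is bookkeeping the covering multiplicities and the several possible shapes of $\bar\gamma$ and its lifts (segment vs.\ closed curve at each of the three stages, and whether $O$ is an interior or endpoint of an arc), exactly as the authors flag for the earlier lemmas — each shape needs its own short argument, and one must be careful that the lift of $\bar\gamma$ is connected/simple so that its geodesic representative is a genuine competitor for the systole. The geometric heart, however — that a smooth closed geodesic cannot route through a cone point of order $>2$ because the two ends would meet at a nonzero exterior angle — is straightforward once the unfolding picture is set up.
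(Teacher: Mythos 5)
There is a genuine gap in the central mechanism. Your contradiction rests on the claim that when the order-$n$ point $O$ is unfolded to a regular point (this happens in $S^2(2,2,\dots,2)$, where the preimage of $O$ under $\pi'$ is the center of the $2n$-gon), the lift of $\bar\gamma$ acquires a corner that can be shortened. But the relevant lift --- the arc of $\pi(\gamma)$, where $\gamma$ is the actual systole --- is a smooth geodesic through that regular point and has no corner at all. What is true is that the image of a smooth geodesic through a cone point is broken \emph{downstairs} at $O$; upstairs, the two half-arcs of $\bar\gamma$ each lift to $n$ rays at the unfolded point, and exactly one of the $n$ pairings continues straight (that pairing is $\pi(\gamma)$). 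The corners live only on the other, ``wrong'' pairings, and shortening one of those produces a curve with no controlled relation to the systole length, so no contradiction with minimality follows. For the same reason, the criterion in your last paragraph is false: the image of a smooth closed geodesic \emph{can} pass through a cone point whose stabilizer does not contain a rotation by $\pi$; the image simply has a corner there, which is perfectly consistent with $\gamma$ being a geodesic upstairs. Note also that a single transversal intersection of $\gamma$ with its rotate $\sigma(\gamma)$ at the fixed point is not by itself forbidden --- Claim \ref{claim_basic} allows one intersection --- so you cannot conclude directly from the crossing either.

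The paper's proof is much shorter and uses the intersection machinery rather than length: a small arc of $\bar\gamma$ through $O$ pulls back to $n$ arcs all crossing at the single preimage point in $S^2(2,2,\dots,2)$, which is a \emph{regular} point of that orbifold; these arcs are images under $\pi$ of systoles (deck translates of $\gamma$), and Lemma \ref{lem_hyp_ell} states that images of systoles on $S^2(2,2,\dots,2)$ neither self-intersect nor intersect one another at regular points. That is the contradiction. Your write-up correctly identifies the unfolding step and cites Lemma \ref{lem_hyp_ell}, but only to guarantee simplicity of the lift; to repair the argument you should discard the shortening step entirely and instead apply Lemma \ref{lem_hyp_ell} to the $n$ crossing branches at the regular preimage of $O$.
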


\begin{proof}
		The order $n$ singular point is lifted to a regular point in $S^2(2,2,\dots,2)$, and a segment in the neighborhood of the order $n$ point that passes the point is lifted to $n$ segments intersecting at the pre-image of the order $n$ singular point. Then by Lemma \ref{lem_hyp_ell}, the curve passing the order $n$ singular point cannot be lifted to a systole in the surface. 
\end{proof}

By Lemma \ref{lem_hyp_ell_n_2}, the image of systoles in $S^2(2,2,2,n)$ is either a simple closed curve or the double of a segment connecting two singular points. We first consider the systole whose image is a segment connecting two singular points.

\begin{lemma}
		For two given singular points $p,q$ of the orbifold $S^2(p,q,r,s)$, there is a 1-1 correspondence between the segment between $p,q$ (up to homotopy) and the elements of the fundamental group $\pi_1(S^1)$. 
		\label{lem_corresp_seg}
\end{lemma}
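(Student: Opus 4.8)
The plan is to make the correspondence concrete by cutting the orbifold open along the prescribed segment and keeping track of a topological invariant. Fix the two singular points $p,q$ on $S^2(p,q,r,s)$ and let $\gamma_0$ be one reference embedded arc from $p$ to $q$ meeting none of the singular points in its interior. Any other embedded arc $\gamma$ from $p$ to $q$ (considered up to homotopy rel endpoints, through arcs avoiding the singular locus) together with $\gamma_0$ forms a loop $\gamma \cup \bar\gamma_0$ based at $p$ in the once-punctured sphere $S^2 \setminus \{r,s\}$ (after removing also $p,q$ if one wishes to work in the surface). The first step is to show this loop, pushed off to a genuine based loop, determines a well-defined class; since $S^2 \setminus \{r,s\}$ is homotopy equivalent to $S^1$, its fundamental group is $\mathbb{Z}$, and I would define the map sending $\gamma$ to the image of $[\gamma \cup \bar\gamma_0]$ under $\pi_1(S^2\setminus\{p,q,r,s\}) \to \pi_1(S^2\setminus\{r,s\}) \cong \mathbb{Z} = \pi_1(S^1)$. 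This is the candidate bijection.

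Next I would verify surjectivity and injectivity. Surjectivity: given $n \in \mathbb{Z}$, realize $n$ by an arc that winds $n$ times around the region separating $r$ from $s$ before running to $q$; a small general-position/innermost-disk argument shows it can be taken embedded. Injectivity is the heart of the matter: if two embedded arcs $\gamma_1,\gamma_2$ give the same integer, then $\gamma_1 \cup \bar\gamma_2$ is null-homotopic in $S^2\setminus\{r,s\}$, hence bounds a disk there that contains neither $r$ nor $s$ in a way that matters; I would then use the standard fact that two embedded arcs with the same endpoints in a surface that are homotopic rel endpoints are in fact isotopic (the bigon/innermost-arc argument, as in the change-of-coordinates principle for surfaces), adapted to the punctured sphere, to conclude $\gamma_1$ and $\gamma_2$ are homotopic as arcs. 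One must be careful that "homotopy of arcs" in the orbifold allows the interior to sweep across regular points but not across singular points, which is exactly encoded by working in $S^2$ minus the singular set; the two order-$2$ points $p,q$ at the ends are handled by the usual cone-point convention, where a loop around a $\mathbb{Z}/2$ cone point is trivial in the orbifold sense but here we only need the behaviour at $r,s$ to distinguish classes.

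The main obstacle I anticipate is pinning down precisely which equivalence relation on segments is intended — homotopy in the orbifold, isotopy of embedded arcs, or homotopy rel endpoints in the underlying surface punctured at all four special points — and checking these all coincide in this very restricted setting. In full generality these notions differ, but on a sphere with four marked points and for an arc joining two of them, the low complexity forces them to agree, and the paper only needs the count to be $\pi_1(S^1) = \mathbb{Z}$. A secondary subtlety is the dependence on the choice of reference arc $\gamma_0$: a different choice shifts the bijection by a translation of $\mathbb{Z}$, so the statement is really "there is a $1$–$1$ correspondence," not a canonical one, which matches the phrasing. I would close by remarking that under this correspondence the integer $0$ (relative to the natural choice of $\gamma_0$ coming from the geometric picture, e.g. a seam or half-cuff in Figures \ref{fig_orbi_geo_3}) is exactly the "geodesic" representative used later in Section \ref{sec_shortest}, which is why this bookkeeping is set up now.
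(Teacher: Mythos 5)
Your construction is essentially identical to the paper's: both fix a reference arc from $p$ to $q$, send an arbitrary arc $\beta$ to the class of the loop formed with the reference arc in $\pi_1\bigl(|S^2(p,q,r,s)|\setminus\{r,s\}\bigr)\cong\pi_1(S^1)$, and use that this group is $\mathbb{Z}$. The paper simply asserts this is a bijection, whereas you sketch the surjectivity and injectivity checks (winding representatives, bigon argument) and flag the choice-of-equivalence-relation issue, so your write-up is, if anything, more complete than the original.
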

\begin{proof}
		
		We construct the correspondence directly. 
We pick the fundamental group $\pi_1(|S^2(p,q,r,s)|\backslash\{r,s\})$, which is isomorphic to $\pi_1(S^1)$. We notice that the segment between $p$ and $q$ won't pass $r$ or $s$. Choosing a segment $\alpha$ between $p$ and $q$, we have that for any segment $\beta$ between $p$ and $q$, $\alpha\beta^{-1}$ represents an element of $\pi_1(|S^2(p,q,r,s)|\backslash\{r,s\})$. This is the 1-1 correspondence. 
\end{proof}

We define a symbol $\tilde{l}_{pq}$, to be the family of segments connecting the singular points $p$ and $q$. Figure \ref{fig_class} are the three families on $S^2(2,2,2,n)$. 

\begin{figure}[htbp]
		\centering
		\subfigure[a curve in $\tilde{l}_{DE}$]{
		\includegraphics{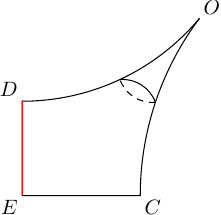}
		\label{fig_class_1}
}
		\subfigure[a curve in $\tilde{l}_{CE}$]{
		\includegraphics{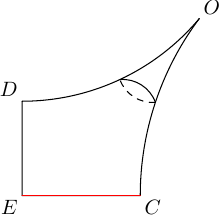}
		\label{fig_class_2}
}
		\subfigure[a curve in $\tilde{l}_{CD}$]{
		\includegraphics{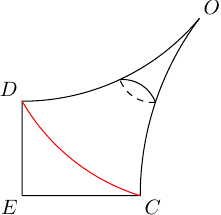}
		\label{fig_class_3}
}
		\caption{}
		\label{fig_class}
\end{figure}

Now we begin to discuss the simple closed curve in $S^2(2,2,2,n)$. 

\begin{lemma}
	The simple closed curve in $S^2(2,2,2,n)$ is the curve 
	contains exactly one order-two singular point and the other two oder-two singular points are in the same side of the curve. 
	(see Figure \ref{fig_222nscc_1}). 	
		\label{lem_unique}
\end{lemma}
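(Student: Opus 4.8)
The plan is to classify all essential simple closed curves on the orbifold $S^2(2,2,2,n)$ up to isotopy and then eliminate every type that cannot carry a systole. First I would recall that an essential simple closed curve $\gamma$ on $S^2(2,2,2,n)$ separates the sphere (Observation \ref{obs_key}), so it divides the four singular points $C,D,E,O$ into two nonempty groups lying on opposite sides. By Lemma \ref{lem_order_n} the image of a systole avoids the order-$n$ point $O$, so the relevant partitions of the four cone points into the two sides of $\gamma$ are: $\{1,3\}$ (one singular point alone on one side), $\{2,2\}$ (two and two), up to which side $O$ sits on. I would then argue that a curve with \emph{all} cone points on one side, or with exactly the configuration claimed in the statement, is the only one compatible with the hyperellipticity constraints of Lemmas \ref{lem_hyp_ell}, \ref{lem_hyp_ell_n}, \ref{lem_hyp_ell_n_2}.

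The key step is to rule out the ``$\{2,2\}$-with-$O$'' and the ``$O$-alone'' configurations by lifting. If $\gamma$ separates $O$ from the other three cone points, or separates $\{O,C\}$ from $\{D,E\}$ (and symmetric cases), I would lift $\gamma$ successively through $\pi''$, $\pi'$, $\pi$ and show the total preimage in $\Sigma$ consists of two simple closed geodesics of equal length that intersect at least twice, contradicting Claim \ref{claim_basic}; alternatively the preimage under the $n$-fold cover $\pi'$ produces $n$ arcs through the preimage of $O$ and then Lemma \ref{lem_hyp_ell_n} (or \ref{lem_order_n}) applies directly. For a curve enclosing $O$ together with exactly one order-two point, the homology/winding computation in $|S^2(2,2,2,n)|\setminus\{O\}$ shows its $\pi'$-lift is connected but self-intersecting at the $O$-preimage, again excluded. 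What remains after these exclusions is exactly: $\gamma$ bounds a disk containing a single order-two cone point (a ``trivial'' loop around one puncture, not a systole candidate unless that cuff is short — but that is the $\tilde l$-segment case in disguise, handled separately), or $\gamma$ has one order-two point on one side and the other two order-two points together with $O$ on the other — which, since the systole avoids $O$, is the configuration in Figure \ref{fig_222nscc_1} with the two remaining order-two points on the same side.

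I expect the main obstacle to be bookkeeping the lifts cleanly: one must track how a given separating curve behaves under the composite cover $\pi''\circ\pi'\circ\pi$, and in particular whether its preimage is connected or splits, because the contradiction with Claim \ref{claim_basic} only bites in the split case while the connected case needs the injectivity-of-$\pi_*$ style argument used in part (b) of the proof of Lemma \ref{lem_hyp_ell}. The cleanest route is probably to observe that $S^2(2,2,2,n)$ with the three order-two points, regarded via the earlier diagram, sits over $\Sigma$ as an orbifold quotient whose deck group is hyperelliptic-type, so that any simple closed curve separating the cone points ``the wrong way'' necessarily lifts to a pair of curves swapped by an involution and meeting transversally; I would formalize this with a parity/intersection-number argument rather than drawing every case. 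The statement then follows by listing the surviving isotopy class, which is precisely the one depicted.
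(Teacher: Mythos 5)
Your strategy --- classify separating curves by how they partition the four cone points, then eliminate the bad configurations by lifting through $\pi''\circ\pi'\circ\pi$ and invoking Claim \ref{claim_basic} --- is genuinely different from the paper's, and it does not close the main cases. The paper works entirely inside the orbifold and shows the excluded curves are not geodesics at all: (i) an essential simple closed curve avoiding every cone point bounds a disk $D^2(2,2)$ containing two order-two cone points, and $\chi(D^2(2,2))=0$ forbids a geodesic boundary (the geodesic representative of that free homotopy class degenerates to a doubled segment joining the two cone points); (ii) a curve through one order-two point whose other two order-two points lie on opposite sides is homotopic to the double of a segment, hence not a geodesic by uniqueness of geodesics in a homotopy class; (iii) a geodesic through an order-two cone point ``returns,'' so a geodesic meeting two or more cone points is a doubled segment, not a simple closed curve.

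The concrete gaps: first, your intersection mechanism fails on the central case. For a simple closed curve disjoint from the cone points, the preimage of the disk it bounds under the relevant branched double cover (a disk with two branch points) is an annulus, so the curve lifts to two \emph{disjoint} closed curves; Claim \ref{claim_basic} yields nothing from disjoint lifts, and nothing forces the eventual lifts in $\Sigma$ to intersect. The true obstruction is geodesic realizability, not intersection number. Second, your partition analysis never treats curves passing through two or three order-two points; the local ``return'' behavior at order-two cone points is what reduces those to doubled segments, and it is needed to get the lemma as stated (exactly one cone point on the curve). Third, your claim that the wrong-sided one-cone-point configuration has a $\pi'$-lift ``self-intersecting at the $O$-preimage'' cannot be correct: the curve avoids $O$, so all of its lifts avoid the preimage of $O$. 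The homotopy-to-a-doubled-segment argument is what does the work in that case.
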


\begin{figure}[htbp]
\centering
\begin{minipage}[t]{0.45\textwidth}
		\centering
				\includegraphics{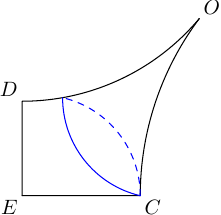}
				\caption{}
				\label{fig_222nscc_1}
\end{minipage}
\begin{minipage}[t]{0.45\textwidth}
		\centering
				\includegraphics{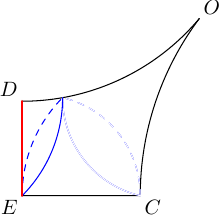}
				\caption{}
				\label{fig_222nscc_2}
\end{minipage}
\end{figure}
\begin{proof}
		First we consider the curve with no singular point on it. Any such simple closed curve in $S^2(2,2,2,n)$ is splitting and in each side there are two singular points (Otherwise the curve corresponds to a torsion element in the fundamental group and cannot be lifted to the surface. ). Moreover, there are two order 2 singular points in one side of the curve. However, the curve cannot be realized as a a geodesic in any hyperbolic structure of the orbifold $S^2(2,2,2,n)$, because the disk it bounds $D^2(2,2)$ has Euler characteristic $0$. If $\partial D^2(2,2)$ is a geodesic boundary, then the interior of $D^2(2,2)$ admits a complete hyperbolic structure, which contradicts to $\chi(D^2(2,2))=0$. 

		If the curve contains one singular point, the only thing to prove is that the other two order-two singular points are in the same side of the curve. Otherwise, there is an order-$n$ singular point and order-$2$ singular point on one side of the curve, while only one order-$2$ singular point on the other side of the curve. 
		See Figure \ref{fig_222nscc_2}, for the blue curve passing the singular point $E$ (denoted $\alpha$), the singular points $O$ (order $n$) and $C$ (order $2$) are on one side of $\alpha$, while the singular point $D$ (order $2$) is on the other side. We assume $\beta$ is a segment connecting $D$ and $E$, and $\beta$ does not intersect $\alpha$ except at $E$. $\alpha$ is homotopic to the double of $\beta$ by the contractibility of a disk and $\beta$ is homotopic to a geodesic connecting $D$ and $E$. By the uniqueness of geodesics in homotopy class, $\alpha$ cannot be a geodesic and then cannot be the image of a geodesic.

		The geodesic passing a order-$2$ singular points always returns (see Figure \ref{fig_return}). 
		\begin{figure}[htbp]
				\centering
				\subfigure[The lift of a geodesic passing an order-$2$ singular point]{
				\includegraphics{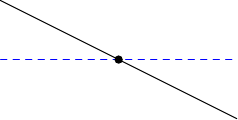}
		}
				\subfigure[The geodesic passing an order-$2$ singular point]{
				\includegraphics{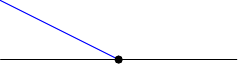}
		}
				\caption{}
				\label{fig_return}
		\end{figure}
		Therefore the image of a systole that contains at least two singular points must be the segment connecting two singular points. The lemma is proved. 

\end{proof}

\begin{lemma}
		There is an injective map from the simple closed curve passing one order-two singular point to the fundamental group $\pi_1(S^1)$. 
		\label{lem_corresp_scc}
\end{lemma}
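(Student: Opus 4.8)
The plan is to imitate the proof of Lemma~\ref{lem_corresp_seg}, the only new ingredient being that one must record which singular point the curve runs through. By Lemma~\ref{lem_unique} (and Lemma~\ref{lem_order_n}) a simple closed curve $\gamma$ of the relevant type meets exactly one order-two singular point --- say $E$ --- is disjoint from the other three singular points $C,D,O$, and has $C,D$ on one side and $O$ on the other. Fix once and for all a reference curve $\gamma_0$ of this type (say the one drawn in Figure~\ref{fig_222nscc_1}), and regard $\gamma,\gamma_0$ as loops based at $E$. I would then set $\Phi(\gamma):=[\gamma\gamma_0^{-1}]$, computed in $\pi_1$ of the complement of a small disk around $\{C,D\}$ together with the point $O$: since $C,D$ lie on one side of both curves, that complement is an annulus $A$, so $\pi_1(A)\cong\pi_1(S^1)\cong\mathbb{Z}$ (generated by a loop around $O$), and $\Phi(\gamma)$ is the difference of the winding numbers of $\gamma$ and $\gamma_0$ about $O$. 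An equivalent way to phrase the same map: $\gamma$ bounds a disk $\Delta$ containing $O$, and the homotopy class of a radial arc $\delta\subset\Delta$ from $E\in\partial\Delta$ to $O$ is well-defined; applying Lemma~\ref{lem_corresp_seg} with $(p,q,r,s)=(E,O,C,D)$ then identifies the set of such arcs with $\pi_1(|S^2(2,2,2,n)|\setminus\{C,D\})\cong\pi_1(S^1)$.

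Well-definedness is checked as in Lemma~\ref{lem_corresp_seg}: a different admissible disk around $\{C,D\}$ is carried to the chosen one by an ambient isotopy fixing $E$, $O$ and a neighbourhood of $\gamma\cup\gamma_0$, while changing which strand at $E$ serves as base point, or reversing orientations, only conjugates $\gamma\gamma_0^{-1}$, which is harmless in the abelian group $\pi_1(A)$. (If $\gamma$ passes through $E$ more than once one first cuts $\gamma$ at $E$ and records the resulting cyclic concatenation of $E$-based arcs.) For injectivity, suppose $\Phi(\gamma)=\Phi(\gamma')$; cancelling $\gamma_0$ in $\pi_1(A,E)$ gives $[\gamma]=[\gamma']$, so $\gamma$ and $\gamma'$ have equal winding number about $O$. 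The ``$C,D$ on one side'' condition of Lemma~\ref{lem_unique} forces the class of each of $\gamma,\gamma'$ in $\pi_1(|S^2(2,2,2,n)|\setminus\{C,D,O\})$ --- the free group of a thrice-punctured sphere --- to be a power of the peripheral class about $O$, and that power is precisely the winding number $\Phi$ records; hence $\gamma$ and $\gamma'$ represent the same free homotopy class in $|S^2(2,2,2,n)|\setminus\{C,D,O\}$, a fortiori in the orbifold $S^2(2,2,2,n)$, and so have the same geodesic representative. Thus $\Phi$ is injective. (Lemma~\ref{lem_corresp_scc} asserts only injectivity, so I would not try to describe the image.)

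The step I expect to be the main obstacle is this last appeal to Lemma~\ref{lem_unique}: showing that ``$C,D$ on one side'' pins the $\pi_1$-class of such a curve down to a single power of the peripheral loop about $O$ --- equivalently, that a coincidence of winding numbers in the annulus $A$ upgrades to a free homotopy in the full thrice-punctured sphere. This requires a careful account of how $\gamma$ lies relative to an arc from $C$ to $D$, and, when $\gamma$ meets $E$ several times, of the combinatorics of the $E$-based pieces and of which complementary region of $\gamma$ contains $O$; everything else --- the comparison/torsor bookkeeping and the identifications $\pi_1(A)\cong\pi_1(|S^2(2,2,2,n)|\setminus\{C,D\})\cong\pi_1(S^1)$ --- is routine and already implicit in the proof of Lemma~\ref{lem_corresp_seg}.
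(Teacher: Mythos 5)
Your primary construction does not work. By Lemma \ref{lem_unique} every curve $\gamma$ in the family is a \emph{simple} closed curve in the underlying sphere bounding a disk whose interior contains $O$ and no other singular point; hence in your annulus $A$ (and in the thrice-punctured sphere $|S^2(2,2,2,n)|\setminus\{C,D,O\}$) every such $\gamma$ is isotopic to the peripheral circle about $O$ and has winding number $\pm1$. Consequently $\Phi(\gamma)=[\gamma\gamma_0^{-1}]\in\pi_1(A)\cong\mathbb{Z}$ takes at most the values $0,\pm2$ on an infinite family, so it cannot be injective; and the ``power of the peripheral class'' in your injectivity paragraph is always $\pm1$, so your argument, if pushed through, would show that \emph{all} curves in $\tilde{l}_E$ are freely homotopic in the thrice-punctured sphere. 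That conclusion is in fact true, but the final step ``same free homotopy class $\Rightarrow$ same geodesic representative'' is where it breaks: these curves pass through the order-two cone point $E$, and a geodesic through a cone point is not the geodesic representative of its free homotopy class in the complement of the singular points. The data that distinguishes the members of $\tilde{l}_E$ is precisely how they wrap relative to $E$, which is invisible to any invariant computed from free homotopy in $|S^2(2,2,2,n)|\setminus\{C,D,O\}$.

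Your parenthetical ``equivalent'' reformulation --- the radial arc $\delta$ from $E$ to $O$ inside the disk $\Delta$ bounded by $\gamma$ --- is \emph{not} equivalent to the winding number, and it is the right kind of invariant; it is a variant of what the paper actually does. The paper's map sends $\gamma$ to the unique homotopy class of segment joining the two order-two singular points lying on the other side of $\gamma$ (unique because that side is a disk), identifies such segments with $\pi_1(S^1)$ via Lemma \ref{lem_corresp_seg}, and proves injectivity by cutting the orbifold along that segment: the result is a disk with two singular points, in which the simple closed curve through the given singular point is unique up to homotopy. To repair your proof you must discard the winding-number version, keep the arc-valued map, and supply an injectivity argument of this cutting type for it; the appeal to free homotopy classes in the thrice-punctured sphere cannot be salvaged.
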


\begin{proof}
		By Lemma \ref{lem_unique}, for a simple closed curve passing one order-two singular point, the other two order-two singular points are on the same side of the curve. 
		There is a unique segment connecting the two singular points, not meeting the curve. Then by Lemma \ref{lem_corresp_seg}, there is a 1-1 correspondence between segments connecting the two singular points and elements in $\pi_1(S^1)$. This Lemma holds. 

		This segment is unique because the part of the orbifold bounded by the curve that contains the two singular points is a disk (with two singular points), and therefore any two segments connecting the two singular points in this disk are homotopic. 

		The map is injective because by cutting along the segment, we get a disk with two singular points. The curve passing one given singular point is unique up to homotopy.

\end{proof}

We define another symbol $\tilde{l}_p$ to be the family of simple closed geodesics in $S^2(2,2,2,n)$ that pass an order-two singular point $p$. Figure \ref{fig_222nscc_1} is a curve in $\tilde{l}_C$.

We define a symbol for the comvenience to state the following useful theorem. 

We let $l$ be a curve in $\tilde{l}_{pq}$ or $\tilde{l}_p$. We define $|l(S^2(2,2,n,n))|$ be the length of a component of $l$'s preimage in $S^2(2,2,n,n)$. Similarly, we can define $|l(S^2(2,2,\dots,2))|$ and $|l(\Sigma)|$. This definition is well defined by the symmetry of the surface. The detail is in the proof of the following Theorem. 

\begin{theorem}
		For $l, l'\in \tilde{l}_{pq}$ or $\tilde{l}_p$, if $l$ and $l'$ can be lifted to a systole of the $\Gamma(2,n)$ surface, then 
		\begin{eqnarray*}
				\frac{|l(S^2(2,2,n,n))|}{|l|} &=&  \frac{|l'(S^2(2,2,n,n))|}{|l'|} \\
				\frac{|l(S^2(2,2,\dots,2))|}{|l|} &=&  \frac{|l'(S^2(2,2,\dots,2))|}{|l'|} \\
				\frac{|l(\Sigma)|}{|l|} &=&  \frac{|l'(\Sigma)|}{|l'|}. 
		\end{eqnarray*}

		\label{thm_equ}
\end{theorem}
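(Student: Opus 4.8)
The plan is to reduce everything to a single integer attached to a curve $l$ — its \emph{traversal number} $m_X(l)$ — and to show that this integer is well defined (Step~1), that it computes all three ratios in the statement (Step~2), and that it depends only on the family $\tilde l_{pq}$ (or $\tilde l_p$), not on $l$ itself (Step~3). Throughout, "$l$ can be lifted to a systole" is used to know that $l$ is a geodesic which is embedded away from the singular points and of one of the two types described by Lemmas \ref{lem_hyp_ell}, \ref{lem_hyp_ell_n}, \ref{lem_hyp_ell_n_2} and \ref{lem_unique}.

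\textbf{Step 1: the coverings are regular, so $|l(X)|$ is well defined.} First I would check that $\pi$, $\pi'\circ\pi$ and $\pi''\circ\pi'\circ\pi$ are regular branched orbifold coverings. For $\pi$ this is the quotient by $\langle\tau\rangle$ and for $\pi'\circ\pi$ the quotient by $\mathbb Z^2\oplus\mathbb Z^n$; for $\pi''\circ\pi'\circ\pi$ one verifies that the deck involution of $\pi''$ lifts to $\Sigma$, which holds because the monodromy of $\pi'\circ\pi$ assigns mutually inverse generators of $\mathbb Z^n$ to the two order-$n$ singular points of $S^2(2,2,n,n)$ and the same generator of $\mathbb Z^2$ to the two order-$2$ singular points — data invariant under that involution. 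Granting regularity, the deck group of each covering acts transitively on the connected components of the preimage of a geodesic $l$ (a point of the preimage determines its component, and the deck group is transitive on each fibre); hence all components have the same length, which is exactly the well-definedness asserted in the paragraph preceding the theorem.

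\textbf{Step 2: each preimage component is a closed geodesic of length $m_X(l)\,|l|$.} Since $l$ meets each order-$2$ singular point on it "straight" and the coverings are locally modelled on $z\mapsto z^{2}$ over such a point, the full preimage of a small straight segment through that point is a straight diameter; consequently every component of the preimage of $l$ in $X\in\{\Sigma,\,S^2(2,2,n,n),\,S^2(2,2,\dots,2)\}$ is a genuine closed geodesic, and its length is $m_X(l)\,|l|$, where the (necessarily even) integer $m_X(l)$ is the number of times it runs over $l$. By Step 1 this integer is the same for all components, so the three quantities in the statement are precisely $m_{S^2(2,2,n,n)}(l)$, $m_{S^2(2,2,\dots,2)}(l)$ and $m_{\Sigma}(l)$, and it remains to show each is constant on the family.

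\textbf{Step 3: the traversal number depends only on the family.} Lifting $l$ and "bouncing" it off the preimages of the order-$2$ singular points as in Step 2, one sees that the number of preimage components of $l$ and their traversal numbers are encoded in the cycle structure of the monodromy permutation of a loop $c_l\in\pi_1^{\mathrm{orb}}(S^2(2,2,2,n))$ assembled from $l$ and the local generator at one of its endpoint singular points (for $\tilde l_p$ one uses $[l]$ itself). If $l,l'$ belong to the same family, then by Lemma \ref{lem_corresp_seg} (respectively Lemma \ref{lem_corresp_scc}) they differ by a power of the generator $\delta$ of the relevant copy of $\pi_1(S^1)$, and tracing through the definition of $c_l$ shows $c_{l'}$ is conjugate to $c_l$ in $\pi_1^{\mathrm{orb}}(S^2(2,2,2,n))$ (conjugated by $\delta$, carried to the basepoint). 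Therefore the monodromy permutations of $c_l$ and $c_{l'}$ are conjugate in the deck group, hence have the same cycle type, so $l$ and $l'$ have equal traversal numbers in each of $\Sigma$, $S^2(2,2,n,n)$, $S^2(2,2,\dots,2)$, which is the asserted equality of ratios. Equivalently, this is the "Seifert surface" reformulation mentioned in the introduction: build $X$ by cutting $S^2(2,2,2,n)$ along $l$ and gluing copies of the resulting piece cyclically according to the monodromy; since cutting along $l$ or along $l'$ produces homeomorphic pieces glued by conjugate patterns, the number of copies through which a lift of the curve runs — its traversal number — is unchanged.

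\textbf{Expected main obstacle.} The delicate part is the cone-point bookkeeping underpinning Steps 2 and 3: making rigorous that $l$ meets each order-$2$ singular point straight, that the preimages in the tower are therefore unbroken geodesics of length an integer multiple of $|l|$, and that the "return monodromy" along a lift of $l$ is the conjugation-invariant datum $\rho(c_l)$ — i.e.\ handling the orbifold fundamental groupoid with basepoints at the cone points. A secondary technical point is confirming the regularity/transitivity of Step 1, namely that the extra double cover $\pi''$ is compatible with the $\mathbb Z^2\oplus\mathbb Z^n$-symmetry.
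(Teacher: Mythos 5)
Your Step 3 contains a genuine gap: the claim that $c_{l'}$ is conjugate to $c_l$ in $\pi_1^{\mathrm{orb}}(S^2(2,2,2,n))$ is false. Write the closed curve obtained by doubling a segment $l\in\tilde l_{pq}$ as $c_l=\gamma m_q\gamma^{-1}m_p$, where $\gamma$ is the underlying arc and $m_p,m_q$ are the local order-two generators at its endpoints. If $l'$ differs from $l$ by $\delta^k$ as in Lemma \ref{lem_corresp_seg}, then $c_{l'}=\delta^k(\gamma m_q\gamma^{-1})\delta^{-k}m_p$: only the first factor gets conjugated, so $c_{l'}$ is a \emph{partial} conjugate of $c_l$, not a conjugate, unless $\delta$ happens to commute with $m_p$. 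Indeed they cannot be conjugate: $c_l$ and $c_{l'}$ are hyperbolic elements whose conjugacy classes correspond to the distinct closed geodesics obtained by doubling $l$ and $l'$, which have different lengths, and conjugate elements have equal translation length. Hence you cannot deduce that $\rho(c_l)$ and $\rho(c_{l'})$ are conjugate in the deck group from a conjugacy upstairs; whether they at least have the same order (hence the same cycle type and equal traversal numbers) depends on specific features of the monodromy --- for instance that the first two deck groups in the tower are abelian, that the local monodromy at $C$ is trivial until the last stage, and that the hyperelliptic involution is central in the non-abelian, dihedral-type deck group of the full degree-$4n$ cover. None of this is verified, and the regularity of $\pi''\circ\pi'\circ\pi$ asserted in your Step 1 is itself only sketched. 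A minor symptom of the same looseness: the traversal number is not ``necessarily even'' --- for $\tilde l_C$ the first ratio in Table \ref{tab_ratio} equals $1$.

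The conclusion is true, and your monodromy strategy could be repaired by computing the deck group and the local monodromies explicitly, but note that the paper avoids the conjugacy question entirely. It constructs each cover in the tower by the cut-and-reglue (Seifert-surface) construction along an auxiliary curve that is chosen, separately for each family, to be disjoint from --- or to meet exactly once --- the given curve $l$; the lift of $l$ and its length are then read off directly, producing the constants of Table \ref{tab_ratio}. That adapted choice of cutting curve is precisely the ingredient your argument is missing.
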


This Theorem has a direct Corollary: 
\begin{cor}
		If $l_0$ is the shortest curve in one curve family ($\tilde{l}_{pq}$ or $\tilde{l}_p$) in $S^2(2,2,2,n)$, then $l_0$'s lift in the $\Gamma(2,n)$ surface $\Sigma$ is not longer than the lift of other curves in the same family. 
		\label{cor_reduce}
\end{cor}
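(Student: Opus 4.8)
The plan is to deduce Corollary \ref{cor_reduce} in one step from Theorem \ref{thm_equ}. Fix the $\Gamma(2,n)$ surface $\Sigma$ and one of the curve families $\mathcal{F}\in\{\tilde{l}_{pq},\tilde{l}_p\}$ on $S^2(2,2,2,n)$. Theorem \ref{thm_equ} says exactly that, as $l$ ranges over the curves of $\mathcal{F}$ that lift to a systole of $\Sigma$, the ratio
\[
\rho \;:=\; \frac{|l(\Sigma)|}{|l|}
\]
is one and the same positive number, depending only on $\mathcal{F}$ and on $\Sigma$; positivity is clear because $\pi''\circ\pi'\circ\pi$ is a local isometry. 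Equivalently, the two length functions $l\mapsto|l|$ (measured in $S^2(2,2,2,n)$) and $l\mapsto|l(\Sigma)|$ (measured in $\Sigma$) on $\mathcal{F}$ are proportional, with positive constant of proportionality $\rho$.

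Two proportional positive functions on the same set share their minimizers. So I would simply observe that, since $l_0$ is by hypothesis the shortest curve of $\mathcal{F}$ on $S^2(2,2,2,n)$ --- that is, $l_0$ minimizes $l\mapsto|l|$ --- it also minimizes $l\mapsto|l(\Sigma)|$; explicitly, for every $l\in\mathcal{F}$,
\[
|l_0(\Sigma)| \;=\; \rho\,|l_0| \;\le\; \rho\,|l| \;=\; |l(\Sigma)|,
\]
which is precisely the statement of the corollary. That a shortest curve $l_0$ exists at all is not a concern: under the identification of $\mathcal{F}$ with $\pi_1(S^1)\cong\mathbb{Z}$ furnished by Lemma \ref{lem_corresp_seg} or Lemma \ref{lem_corresp_scc}, the orbifold length of a curve grows without bound as its winding index tends to $\pm\infty$, so the minimum is attained.

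There is no genuine obstacle in the corollary itself; its whole content rests on Theorem \ref{thm_equ}, which is where the real work happens --- the cyclic-cover construction (analogous to building covers from a Seifert surface) that shows the proportion $\rho$ is independent of the curve. The one point I would be careful to state is that the comparison is carried out within the subfamily of curves of $\mathcal{F}$ that do lift to a systole of $\Sigma$, which is exactly the hypothesis of Theorem \ref{thm_equ}; the finitely many curves of $\mathcal{F}$ that must be ruled out because they cannot be so lifted are treated separately in Propositions \ref{prop_cuff_fake_not} and \ref{prop_11_not} and do not enter this comparison.
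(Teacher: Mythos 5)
Your proposal is correct and matches the paper's (implicit) reasoning: the paper offers no separate argument for Corollary~\ref{cor_reduce} beyond calling it a ``direct corollary'' of Theorem~\ref{thm_equ}, and the intended deduction is exactly your observation that a constant ratio $\rho=|l(\Sigma)|/|l|$ on a family makes the two length functions proportional, hence co-minimized. Your caveat about restricting to curves that lift to systoles is harmless here, since the proof of Theorem~\ref{thm_equ} in fact establishes the ratios for \emph{all} curves in each family (the systole hypothesis is not used), so the comparison extends to the whole family as the corollary's wording requires.
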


\begin{proof}
		Consider the orbifold $S^2(2,2,2,n)$ in Figure \ref{fig_222n}. $O$ is the order-$n$ singular point, $C$, $D$ and $E$ are order-two singular points. $D$ and $E$ will be lifted to regular points in $S^2(2,2,n,n)$. 

		\begin{figure}[htbp]
				\centering
				\includegraphics{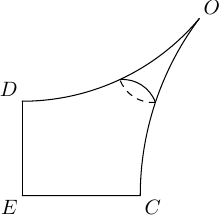}
				\caption{}
				\label{fig_222n}
		\end{figure}
		First we prove that curves in $\tilde{l}_D$ and $\tilde{l}_E$ cannot be lifted to be a systole on $\Gamma(2,n)$ surface. $\forall l \in \tilde{l}_D$, $l$ is lifted to a curve with self-intersections (See Figure \ref{fig_self_int}). Therefore $l$ cannot be lifted to a systole by Lemma \ref{lem_hyp_ell_n}. 

\begin{figure}[htbp]
		\centering
		\includegraphics{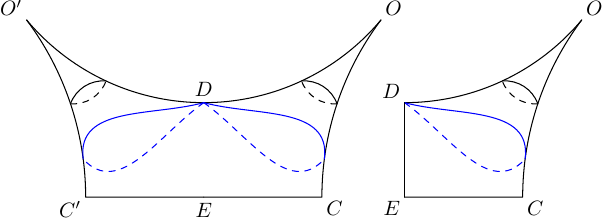}
		\caption{}
		\label{fig_self_int}
\end{figure}

Now we prepare a tool for the following proof. 
We construct the cyclic cover of the orbifold $S^2(2,2,2,n)$, as the construction of cyclic cover of knot completement constructed by Seifert surface. 

We first construct the $S^2(2,2,n,n)$ from $S^2(2,2,2,n)$. Two order-two singular points of $S^(2,2,2,n)$ are lifted to regular points of $S^2(2,2,n,n)$ (points $D$ and $E$ in Figure \ref{fig_222n}). We pick a curve $l$ connecting $D$ and $E$ (in other words $l\in \tilde{l}_{DE}$). We cut the regular neighbourhood ${N(\mathring{l})}$ of the interior of $l$ away, obtaining an orbifold with boundary (See Figure \ref{fig_cyclic_cover}). The boudary of $S^(2,2,2,n)\backslash N(\mathring{l})$ are divided into two parts by $D$ and $E$, we call them $l^+$ and $l^-$ respectively (See Figure \ref{fig_cyclic_cover_5}). 

\begin{figure}[htbp]
		\centering
		\subfigure[]{
		\includegraphics{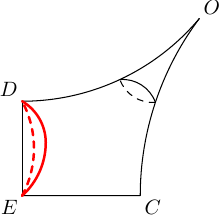}
		\label{fig_cyclic_cover_4}
}
		\subfigure[]{
		\includegraphics{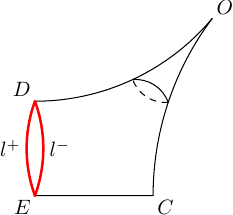}
		\label{fig_cyclic_cover_5}
}
		\caption{}
		\label{fig_cyclic_cover}
\end{figure}

Then we pick two copies of $S^(2,2,2,n)\backslash N(\mathring{l})$ calling them $P_1$ and $P_2$ respectively. We attach $P_1$'s $l^+$ to $P_2$'s $l^-$;  $P_1$'s $l^-$ to $P_2$'s $l^+$. Therefore we get the orbifold $S^2(2,2,n,n)$, the double cover of $S^2(2,2,2,n)$. (Figure \ref{fig_cyclic_cover_double}). 

\begin{figure}[htbp]
		\centering
		\subfigure[]{
		\includegraphics{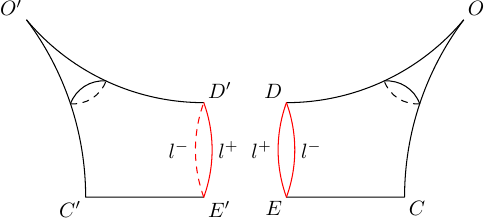}
		\label{fig_cyclic_cover_2}
}
		\subfigure[]{
		\includegraphics{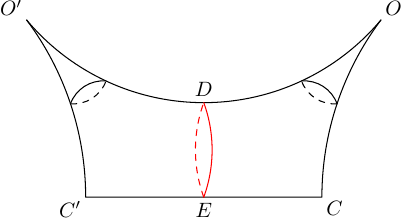}
		\label{fig_cyclic_cover_3}
}
		\caption{}
		\label{fig_cyclic_cover_double}
\end{figure}

Similarly, we construct $S^2(2,2,\dots,2)$, the order-$n$ cyclic cover of $S^2(2,2,n,n)$ from $S^2(2,2,n,n)$. 

The two order-$n$ singular points of $S^2(2,2,n,n)$ (the points $O$ and $O'$ in Figure \ref{fig_cyclic_cover_3}) will be lifted to regular points in $S^2(2,2,\dots,2)$. We pick a segment $l$ connecting $O$ and $O'$ (in other words $l\in \tilde{l}_{OO'}$) and cut away $N(\mathring{l})$ (Figure \ref{fig_cyclic_cover_n}). 

\begin{figure}[htbp]
		\centering
		\subfigure[]{
		\includegraphics{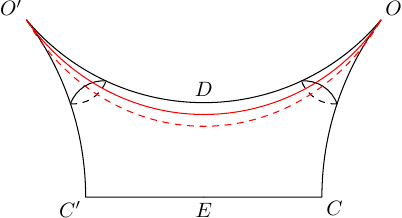}
		\label{fig_cyclic_cover_n_1}
}
		\subfigure[]{
		\includegraphics{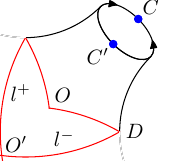}
		\label{fig_cyclic_cover_n_2}
}
		\caption{}
		\label{fig_cyclic_cover_n}
\end{figure}

The boundary of $S^2(2,2,n,n)\backslash N(\mathring{l})$ is divided into two pieces by $O$ and $O'$. We call the two pieces $l^+$ and $l^-$ respectively. We pick $n$ copies of $S^2(2,2,n,n)\backslash N(\mathring{l})$, denoted $P_1$, $P_2$, \dots, $P_n$ respectively. We attach $P_i$'s $l^+$ to $P_{i+1}$'s $l^-$ (assign $P_{n+1}=P_1$). Then we get $S^2(2,2,\dots,2)$ the $n$-cyclic cover of $S^2(2,2,2n,n)$ (Figure \ref{fig_cyclic_cover_n_n}). 

\begin{figure}[htbp]
		\centering
		\includegraphics{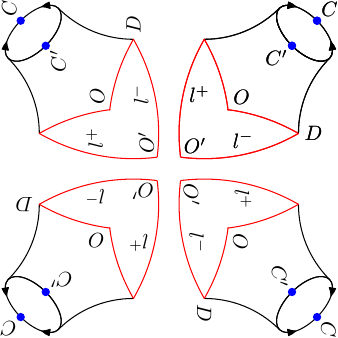}
		\includegraphics{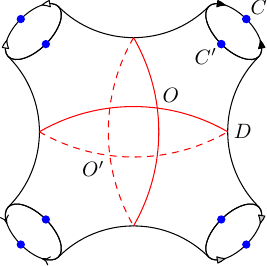}
		\caption{}
		\label{fig_cyclic_cover_n_n}
\end{figure}

Then we construct the surface $\Sigma$, the double cover of the orbifold $S^2(2,2,\dots,2)$. We construct it from $S^2(2,2,n,n)$. In $S^2(2,2,n,n)$ (Figure \ref{fig_cyclic_cover_3}), we pick a curve $l$ connecting $C$ and $C'$ (in other words $l\in \tilde{l}_{CC'}$). We cut the regular neighbourhood of $\mathring{l}$ and get $S^2(2,2,n,n)\backslash N(\mathring{l})$. The boundry of $S^2(2,2,n,n)\backslash N(\mathring{l})$ are divided into two pieces by $C$ and $C'$. We call the two pieces $l^-$ and $l^+$ respectively. Then we consider the pre-image $\pi'^{-1}(S^2(2,2,n,n)\backslash N(\mathring{l}))\subset S^2(2,2,2\dots,2)$. (Figure \ref{fig_cyclic_cover_n_5}) We pick two copies $P_1$ and $P_2$ of $\pi'^{-1}(S^2(2,2,n,n)\backslash N(\mathring{l}))$ then attach $P_1$'s $l^+$ to $P_2$'s $l^-$ and attach $P_1$'s $l^-$ to $P_2$'s $l^+$. Then we get the surface $\Sigma$. 

\begin{figure}[htbp]
		\centering
		\includegraphics{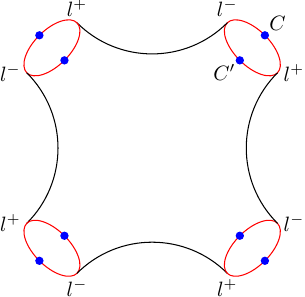}
		\caption{}
		\label{fig_cyclic_cover_n_5}
\end{figure}

The tool is now prepared. Now we continue to prove this theorem. 

(1) First we consider the family $\tilde{l}_C$. 
In the orbifold $S^2(2,2,2,n)$ (Figure \ref{fig_222n}), $\forall l \in \tilde{l}_C$, by Lemma \ref{lem_corresp_scc}, there exists $ l'\in \tilde{l}_{DE}$ such that $l\cap l' = \emptyset$. (Figure \ref{fig_22nnscc_1})

Then by the construction of $S^2(2,2,2,n)$'s double cover using $l'\in \tilde{l}_{DE}$, $l$ is lifted to two copies with the length equal to $l$'s length. Therefore, $\forall l\in \tilde{l}_C$, 
\[
		|l(S^2(2,2,2,n))| = |l(S^2(2,2,n,n))|. 
\]

For any $ l\in \tilde{l}_C$ in $S^2(2,2,2,n)$, one side of $l$ contains the order-$n$ singular point $O$, the other side contains two order-two singular points $D$ and $E$. Therefore, we consider $l$'s lift in $S^2(2,2,n,n)$. With a little abuse of symbol, we denote the lift as $l$ (Figure \ref{fig_22nnscc_1}). Then for the two order $n$ singular points $O$ and $O'$, $l$ seperates $O$ and $O'$, 

\begin{figure}[htbp]
		\centering
		\subfigure[]{
		\includegraphics{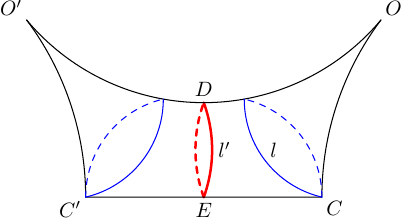}
		\label{fig_22nnscc_2}
}
		\subfigure[]{
		\includegraphics{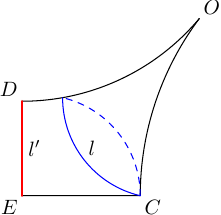}
		\label{fig_22nnscc_1}
}
		\caption{}
		\label{fig_22nnscc}
\end{figure}

We pick a curve $l''$ connecting $O$ and $O'$, $l''$ intersects $l$ exactly once. Then $l$ is cut into two pieces (denoted $l_1$ and $l_2$ respectively) by $l''$. Then in the construction of $S^2(2,2,\dots,2)$ by $l''$, for the $n$ copies of $S^2(2,2,n,n)\backslash N(\mathring{l''})$ (denoted $P_1, P_2, \dots, P_n$), $l_1$ in $P_1$ is attached to $l_2$ in $P_2$ (Figure \ref{fig_n_cover}). Therefore $l$ is lifted to a curve with length equal to $l$'s length. Therefore $\forall l\in \tilde{l}_C$, 
\[
		|l(S^2(2,2,2,n))| = |l(S^2(2,2,n,n))| = |l(S^2(2,2,\dots,2))|.
\]

.
\begin{figure}[htbp]
		\centering
		\includegraphics{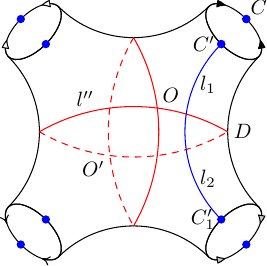}
		\includegraphics{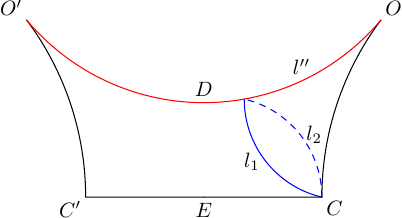}
		\caption{}
		\label{fig_n_cover}
\end{figure}

The end points of $l$ in $S^2(2,2,\dots,2)$ are $C'$ and $C_1'$ respectively. Therefore by the consturction of the surface $\Sigma$, the lift of $l$ in the surface $\Sigma$ consists of two pieces divided by the points $C$ and $C''$. Each piece has the length $|l(S^2(2,2,\dots,2))|$. Therefore, $\forall l\in \tilde{l}_C$, 
\[
		 |l(\Sigma)| = 2|l(S^2(2,2,\dots,2))|.
\]

(2) Then we consider the family $\tilde{l}_{CE}$. The proof here is similar to the proof for the family $\tilde{l}_{C}$. 

\begin{figure}[htbp]
		\centering
		\subfigure[]{
		\includegraphics{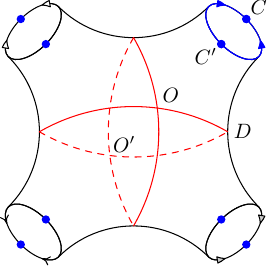}
		\label{fig_n_cover_ce_1}
}
		\subfigure[]{
		\includegraphics{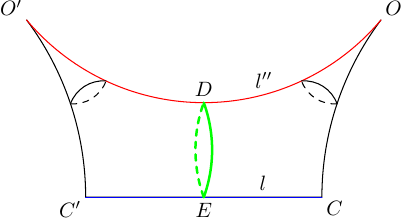}
		\label{fig_n_cover_ce_2}
}
		\subfigure[]{
		\includegraphics{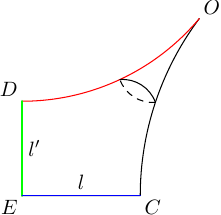}
		\label{fig_n_cover_ce_3}
}
		\caption{}
		\label{fig_n_cover_ce}
\end{figure}

For any $ l \in \tilde{l}_{CE}$, $l$ is a non-seperating curve in $S^2(2,2,2,n)$. Then there is an $l'\in \tilde{l}_{DE}$ such that $l'$ intersects $l$ only once at $E$ (Figure \ref{fig_n_cover_ce_3}). Then we construct $S^2(2,2,2,n)$'s double cover $S^2(2,2,n,n)$ by $l'$. The lift of $l$ in $S^2(2,2,n,n)$ is the blue curve in Figure \ref{fig_n_cover_ce_2}. It consists of two pieces, $CE$ and $C'E$. Each piece has the length equal to $l$'s length. Therefore $\forall l \in \tilde{l}_{CE}$
\[
		|l(S^2(2,2,n,n))| = 2 |l(S^2(2,2,2,n))|. 
\]

Then we lift $l$ to $S^2(2,2,\dots,2)$. $\forall l \in \tilde{l}_{CC'}$ in $S^2(2,2,n,n)$, $l$ is a non-seperating curve. Thus there exists $l''$ connecting $O$ and $O'$ such that $l''$ does not intersect $l$ (Figure \ref{fig_n_cover_ce_2}). Then we use $l''$ to construct $S^2(2,2,\dots,2)$, $S^2(2,2,n,n)$. Thus we know $l$ is lifted to $n$ disjoint segments in $S^2(2,2,\dots,2)$ with length equal to $l$'s. That is to say $\forall l \in \tilde{l}_{CE}$
\[
		|l(S^2(2,2,\dots,2))|=|l(S^2(2,2,n,n))|. 
\]

The last thing is to lift $l$ to the surface $\Sigma$. By exactly the same proof in (1), we have
\[
		|l(\Sigma)| = 2 |l(S^2(2,2,\dots,2))|.
\]

(3) Next we consider the family $\tilde{l}_{CD}$. $\forall l \in \tilde{l}_{CD}$, $l$ is lifted to a curve connecting $C$ and $C'$ in $S^2(2,2,n,n)$ (in other words, in the family $\tilde{l}_{CC'}$) (See Figure \ref{fig_cover_cd}). Then what we need to prove has already proved in (2). 

\begin{figure}[htbp]
		\centering
		\subfigure[]{
		\includegraphics{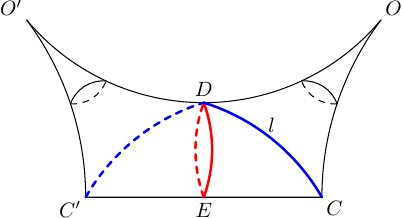}
		\label{fig_cover_cd_2}
}
		\subfigure[]{
		\includegraphics{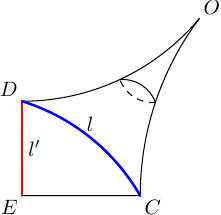}
		\label{fig_cover_cd_3}
}
		\caption{}
		\label{fig_cover_cd}
\end{figure}

(4) Finally we consider the family $\tilde{l}_{DE}$. $\forall l \in \tilde{l}_{DE}$, we use $l$ to construct $S^2(2,2,n,n)$, the double cover of $S^2(2,2,n,n)$. Then the lift of $l$ in $S^2(2,2,n,n)$ is a simple closed curve. This curve is divided into two pieces. Each piece has length equal to the length of $l$ (Figure \ref{fig_n_cover_de_2}, \ref{fig_n_cover_de_3}). Thus $\forall l \in \tilde{l}_{DE}$, 
\[
		|l(S^2(2,2,n,n))| = 2 |l(S^2(2,2,2,n))| . 
\]

\begin{figure}[htbp]
		\centering
		\subfigure[]{
		\includegraphics{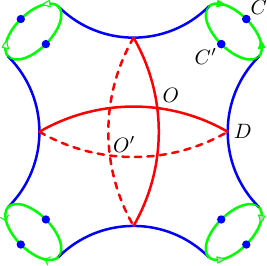}
		\label{fig_n_cover_de_1}
}
		\subfigure[]{
		\includegraphics{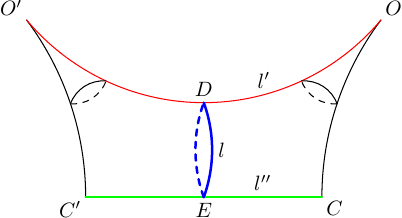}
		\label{fig_n_cover_de_2}
}
		\subfigure[]{
		\includegraphics{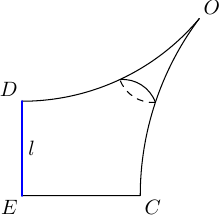}
		\label{fig_n_cover_de_3}
}
		\caption{}
		\label{fig_n_cover_de}
\end{figure}

Then we lift $l$ to $S^2(2,2,\dots,2)$ similar to the proof in (1), $l$ in $S^2(2,2,n,n)$ seperates $O$ and $O'$. We pick a segment $l'$ connecting $O$ and $O'$ such that $l'$ intersects $l$ only once. We use $l'$ to construct $S^2(2,2,\dots,2)$, the $n$-cyclic cover of $S^2(2,2,n,n)$. Since $l\backslash l'$ is a segment, $l$'s lift in $S^2(2,2,\dots,2)$ is a simple closed curve whose length is $n$-th the length of $l$. That is $\forall l\in \tilde{l}_{DE}$, 
\[
		|l(S^2(2,2,\dots,2))|=n|l(S^2(2,2,n,n))|. 
\]

At last we lift $l$ to the surface $\Sigma$, 
We use $l''\in \tilde{l}_{CC'}$ in Figure \ref{fig_n_cover_de_2} (and its pre-image in Figure \ref{fig_n_cover_de_1}) to construct $\Sigma$. $\pi'^{-1}(S^2(2,2,n,n)\backslash N(\mathring{l''}))$ is an $n$-holed sphere. We pick two copies of the $n$-holed spheres, denoted $P_1$ and $P_2$. We attach $P_1$ to $P_2$ along their boundaries (See Figure \ref{fig_n_cover_de_4}) In Figure \ref{fig_n_cover_de_4}, by the definition of $t$, $F_i$ in $P_1$ is attached to $G_{i+1}$ in $P_2$ and $G_i$ in $P_1$ is attached to $F_{i-1}$ in $P_2$. Here $F_i$ and $G_i$ are end points of components of $l$ in the $n$-holed spheres. Therefore, the lift of $l$ in $\Sigma$ consists of simple closed curve(s). Each curve is connected segments in $P_1$ or $P_2$. If $n$ is odd the lift of $l$ is one simple closed curve consists of the segments $G_1F_1^{(1)}$, $G_2F_2^{(2)}$, \dots, $G_nF_n^{(2)}$. If $n$ is even the lift of $l$ consists of two simple closed curves. One curve consists of the segments $G_1F_1^{(1)}$, $G_2F_2^{(2)}$, \dots, $G_nF_n^{(2)}$. The other curve consists of the segments $G_1F_1^{(2)}$, $G_2F_2^{(1)}$, \dots, $G_nF_n^{(1)}$.

\begin{figure}[htbp]
		\centering
		\subfigure[$P_1$]{
		\includegraphics{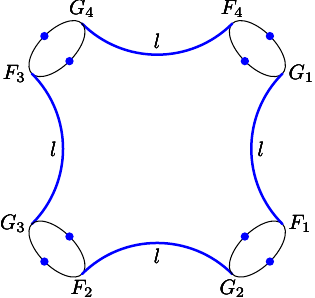}
}
		\subfigure[$P_2$]{
		\includegraphics{n_cover_3-4.pdf}
}
		\caption{}
		\label{fig_n_cover_de_4}
\end{figure}

Therefore, $\forall l \in \tilde{l}_{DE}$, 

\[
		|l(\Sigma)| = 2|l(S^2(2,2,\dots,2))|
\]
if $n$ is odd; while 
\[
		|l(\Sigma)| = |l(S^2(2,2,\dots,2))|
\]
if $n$ is even. 
\end{proof}

Now we have proved the theorem. We list the length ratio we obtained in  the following Table: 
\begin{table}[h]
		\centering
		\begin{tabular}{lllll}
				& $\tilde{l}_{C}$ & $\tilde{l}_{CD}$ & $\tilde{l}_{CE}$ & $\tilde{l}_{DE}$  \\
				$\dfrac{|l(S^2(2,2,n,n))|}{|l(S^2(2,2,2,n))|}$ & 1 & 2 & 2 & 2 \\
				$\dfrac{|l(S^2(2,2,\dots,2))|}{|l(S^2(2,2,n,n))|}$ & 1 & 1 & 1 & n \\ 
				$\dfrac{|l(\Sigma)|}{|l(S^2(2,2,\dots,2))|}$ & 2 & 2 & 2 & 2 if n is odd; 1 if n is even 
		\end{tabular}
		\caption{}
		\label{tab_ratio}
\end{table}

\section{Shortest curve in each family}
\label{sec_shortest}

By Lemma \ref{lem_hyp_ell_n_2}, there are only four families of curves in $S^2(2,2,2,n)$ ($\tilde{l}_C$, $\tilde{l}_{CE}$, $\tilde{l}_{CD}$ and $\tilde{l}_{DE}$) that are possible to be lifted to a systole. 
By Corollary \ref{cor_reduce}, in each family, only the shortest curve is possible to be lifted to a systole in the surface. 

Then we characterize the shortest curve in each family in $S^2(2,2,2,n)$ by the Proposition \ref{prop_reduce}. 

Before stating Proposition \ref{prop_reduce}, we give a short preparation: 

We recall that the pentagon in Figure \ref{fig_orbi_geo_3} is a model to describe the geometry of the orbifold $S^2(2,2,2,n)$. 

The vertices of the pentagon in Figure \ref{fig_orbi_geo_3} that corresponds to the same point of $S^2(2,2,2,n)$ are labeled by the same letter. To avoid ambiguity, we replace one $A$ and one $D$ by $A_1$ and $D_1$ respectively (Figure \ref{fig_polygon_3}). 

A curve in the orbifold $S^2(2,2,2,n)$ corresponds to broken segments in the pentagon. (See for example Figure \ref{fig_00_curve_1}) The pentagon is symmetric. There is a 'reflection' (orietation-reversing, isometric map) of the pentagon mapping $A_1$ to $A$ and $D_1$ to $D$. We reflect some components of the segments corresponding to the curve, then get a connected broken line with the same length to the segments (the dashed blue line in Figure \ref{fig_00_curve_2}). By this construction, one of the endpoint of the broken line is an endpoint of the segments (point $C$ in Figure \ref{fig_00_curve}); while the other endpoint of the broken line is either the other endpoint of the segments (point $D$ in Figure \ref{fig_00_curve}) or the reflection of the endpoint of the segments (point $D_1$ of Figure \ref{fig_00_curve}). 

\begin{figure}[htbp]
		\centering
		\includegraphics{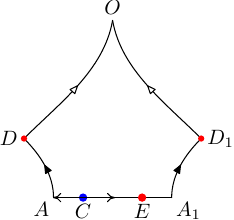}
		\caption{}
		\label{fig_polygon_3}
\end{figure}

\begin{proposition}
		The shortest curve among a family of curves ($\tilde{l}_C$, $\tilde{l}_{CE}$, $\tilde{l}_{DE}$ or $\tilde{l}_{CD}$) in $S^2(2,2,2,n)$ corresponds to the broken segments with the least number of components. 

		Moreover, the shortest curve in each family is shown in Figure \ref{fig_sys_cand}. 
		\label{prop_reduce}
\end{proposition}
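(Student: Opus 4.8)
The plan is to work entirely inside the pentagon model of Figure~\ref{fig_orbi_geo_3}, using the unfolding/reflection trick described just above the statement, which converts each curve in a family $\tilde{l}_{pq}$ or $\tilde{l}_p$ into a connected broken geodesic line in a developed picture whose endpoints are prescribed lattice-type points (either $C$ and $D$, or $C$ and $D_1$, etc., depending on the parity encoded by the $\mathbb{Z}$-label from Lemmas~\ref{lem_corresp_seg} and~\ref{lem_corresp_scc}). First I would make precise the statement that a curve with $\mathbb{Z}$-label $k$ corresponds to a broken line crossing the ``cut'' edges of the pentagon exactly $|k|$ (or $|k|+c$ for a small constant $c$ depending on the family) times, so that ``least number of components'' is literally ``$k=0$ or $k=\pm1$'', i.e.\ the curves depicted in Figure~\ref{fig_sys_cand}. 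This identification is essentially a restatement of the correspondence lemmas together with the bookkeeping of how a segment in the orbifold meets the fundamental-domain boundary.

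The analytic heart of the argument is a convexity/monotonicity statement: among all broken geodesic lines in the developed region with one endpoint fixed at $C$ and the other endpoint ranging over the discrete set of admissible images $\{D_k\}$ of the second singular point (the images of $D$ under the deck-type reflections/translations of the unfolding, indexed by $k\in\mathbb Z$), the length is a strictly increasing function of $|k|$. I would prove this by the standard ``straightening decreases length, and the straight geodesic between $C$ and $D_k$ has length increasing in $|k|$'' argument: the points $D_k$ lie, in the universal cover $\mathbb H^2$, along a horocyclic-free discrete orbit that marches monotonically away from $C$, so $d(C,D_k)$ is increasing in $|k|$; and any broken line representing the curve has length at least $d(C,D_k)$ by the triangle inequality, with the minimum in each class realized by the geodesic representative, which is exactly the picture with the fewest components. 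One must be slightly careful that the geodesic representative of the homotopy class of segments is admissible (does not run through the forbidden order-$n$ vertex $O$, consistent with Lemma~\ref{lem_order_n}) and still corresponds to the broken line with the fewest crossings; this follows because straightening a broken line only removes crossings with the cut edges, never adds them.

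Concretely I would organize it as: (i) fix the pentagon $OACDE$ (relabeled with $A_1,D_1$) and its reflection, and describe the developing map that unfolds a curve of label $k$ into a broken line $\gamma_k$ from $C$ to $D_k$ (or to $E_k$, $D_k$, etc., according to the family); (ii) observe $\operatorname{length}(\gamma_k)\ge d_{\mathbb H^2}(C,D_k)$ with equality iff $\gamma_k$ is the geodesic segment, which has the minimal number of components; (iii) show $d_{\mathbb H^2}(C,D_k)$ is strictly monotone in $|k|$ by locating the $D_k$ along a geodesic or a nice monotone discrete sequence in $\mathbb H^2$ (here the planar axial symmetry of the pentagon gives the needed alignment); (iv) conclude that $k=0$ (for $\tilde{l}_C$, $\tilde{l}_{CE}$, $\tilde{l}_{CD}$) or $k=0,\pm1$ as appropriate is the unique shortest, matching Figure~\ref{fig_sys_cand}; then repeat the four-line computation of (ii)--(iii) for each of the four families, which differ only in which pair of singular points are the endpoints and hence in the exact discrete orbit $\{D_k\}$.

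The main obstacle I expect is step (iii): verifying that the admissible second endpoints genuinely lie on a single geodesic (or at least in convex position relative to $C$) in the universal cover, rather than zig-zagging, so that the distance is honestly monotone in the label and there is no shorter curve with a larger $|k|$. This is where the specific axisymmetry of the pentagon and the structure of the reflection group generated by the pentagon's sides must be used, and it is the only place where a genuine hyperbolic-trigonometry or group-theoretic computation (as opposed to soft topology) is needed; if the $D_k$ failed to be in convex position one would have to replace monotonicity by a more delicate comparison, but the symmetry of the construction should rule that out.
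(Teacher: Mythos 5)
Your overall strategy --- unfold a curve into a broken line via the pentagon's reflection, bound its length below by the straight-line distance to the developed endpoint, and compare those distances over the discrete set of admissible endpoints --- shares its first move with the paper's proof, but the step you yourself flag as the ``main obstacle'' (your step (iii): monotonicity of $d_{\mathbb{H}^2}(C,D_k)$ in $|k|$, i.e.\ convex position of the developed endpoints) is precisely the nontrivial content of the proposition, and you have not supplied it. The paper never needs such a global monotonicity claim. For $\tilde{l}_{CD}$, $\tilde{l}_{DE}$ and $\tilde{l}_C$ the folding is a single involution of the pentagon, so the straightened broken line ends at one of only \emph{two} possible points ($D$ or its mirror $D_1$, etc.), and one elementary right-triangle comparison ($|AC|=t/2 \le c-t/2=|A_1C|$, hence $|CD|\le|CD_1|$) finishes the argument; no infinite orbit $\{D_k\}$ ever enters. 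For $\tilde{l}_{CE}$ the paper instead runs an iterative reduction in which each operation strictly decreases both the length and the number of crossings with $OD$, the key inequality coming from the birectangle identity $\cosh c=\cosh d\cosh a\cosh b-\sinh a\sinh b$.

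There is also concrete evidence that your hoped-for monotonicity is not merely unproven but too optimistic. In the paper's step (3.4) the comparison is a \emph{disjunction}: the original configuration is longer than one of two competing modifications, which one depending on the sign of $|AA_2|-|A_1A_3|$; so the developed endpoints are not aligned so that distance from $C$ grows in a single direction. Consistently, the reduction for $\tilde{l}_{CE}$ terminates in \emph{two} candidates, $l_{CE}$ and $l'_{CE}$, which this proposition does not decide between --- a separate cut-and-paste argument (Proposition \ref{prop_cuff_fake_not}) is needed later to discard $l'_{CE}$. Your plan, which expects a unique shortest representative from a clean convexity statement, would either break at this family or be forced to reproduce the paper's case analysis. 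To repair the proposal, replace step (iii) by explicit pairwise comparisons in the pentagon (the right-triangle cosine law for the two-endpoint families, the birectangle formula for $\tilde{l}_{CE}$), and account for the surviving second candidate $l'_{CE}$.
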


By this Proposition, the possible systoles of the $\Gamma(2,n)$ surface are reduced to finitely many curves. 

\begin{proof}
		The proof for all these cases are similar although they are different in details.

		(1) For $\tilde{l}_{CD}$, the shortest curve in this family is the segment $CD$ in the pentagon $ODAA_1D_1$ (Figure \ref{fig_sys_00}). By the symmetry of the pentagon, $\forall l \in \tilde{l}_{CD}$, there is a connected broken line connecting $CD$ or $CD_1$, with the length of $l$ (The dashed blue line in Figure \ref{fig_00_curve_2}). The broken line connects $CD$ or $CD_1$, therefore is longer than the straight line connecting $CD$ or $CD_1$. But $CD$ is always shorter than $CD_1$, because in the right-angled triangle $\triangle CAD$ and $\triangle CA_1D_1$, $\angle A = \angle A_1 = \pi/2$, $AD = A_1D_1$ while $AC = t/2 < c-t/2 = A_1C_1$ (see Section \ref{sec_geo_orbi}). Therefore the shortest curve in $\tilde{l}_{CD}$ is the segment connecting $C$ and $D$ in the pentagon, denoted $l_{CD}$. (See Figure \ref{fig_sys_00}).

\begin{figure}[htbp]
		\centering
		\subfigure[]{
		\includegraphics{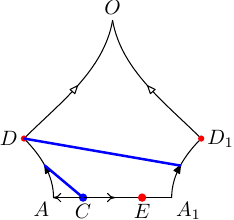}
		\label{fig_00_curve_1}
}
		\subfigure[]{
		\includegraphics{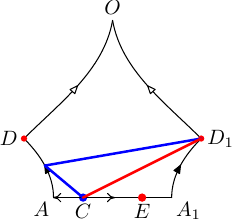}
		\label{fig_00_curve_2}
}
		\caption{}
		\label{fig_00_curve}
\end{figure}

(2) For $\tilde{l}_{DE}$, the proof is exactly the same by the symmetry of the pentagon. The shortest curve in $\tilde{l}_{DE}$ is the straight segment connecting $D_1$ and $E$, denoted $l_{DE}$ (see Figure \ref{fig_sys_11}). 

(3) For  $\tilde{l}_{CE}$, the proof is similar. 

(3.1) For $l\in \tilde{l}_{CD}$, if $l$ ($l$ consists of segments in the pentagon) has a component connecting $AD$, $OD$ or $A_1D_1$, $OD_1$ (see Figure \ref{fig_cuff_1}), then we reflect this segment and the image of the segment connects two other segments of $l$ (Figure \ref{fig_cuff_2}). Therefore we get a curve, shorter than $l$, and has less intersections with $OD$ than $l$ (Figure \ref{fig_cuff_4}). 

\begin{figure}[htbp]
		\centering
		\subfigure[]{
		\includegraphics{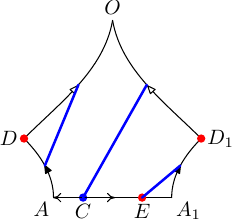}
		\label{fig_cuff_1}
}
		\subfigure[]{
		\includegraphics{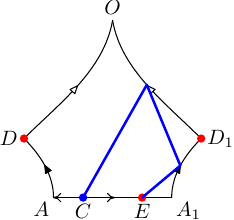}
		\label{fig_cuff_2}
}
		\subfigure[]{
		\includegraphics{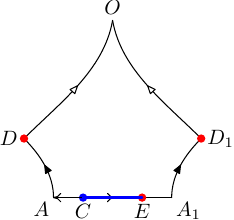}
		\label{fig_cuff_4}
}
		\caption{}
		\label{fig_cuff}
\end{figure}

(3.2) If all of the components of $l$ are segments connecting $AA_1$, $OD$ or $AA_1$, $OD_1$, and there exist components not meeting $C$ or $E$, then we pick two such segments, $A_2D_2$ and $A_3D_3$ in Figure \ref{fig_cuff_5}, $A_2$ and $A_3$ correspond to the same point in the orbifold. Then we replace $A_2D_2$ by $A_3D_5$ (here $A_2$ and $A_3$ correspond to the same point in the orbifold; $D_2$ and $D_5$ correspond to the same point in the orbifold) (see Figure \ref{fig_cuff_6}); or we replace  $A_3D_3$ by $A_2D_4$ (here $A_2$ and $A_3$ correspond to the same point in the orbifold; $D_3$ and $D_4$ correspond to the same point in the orbifold) (see Figure \ref{fig_cuff_7}). 

\begin{figure}[htbp]
		\centering
		\subfigure[]{
		\includegraphics{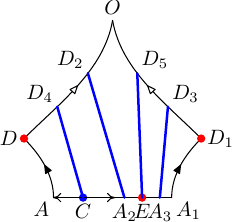}
		\label{fig_cuff_5}
}
		\subfigure[]{
		\includegraphics{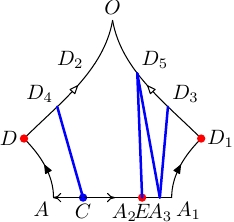}
		\label{fig_cuff_6}
}
		\subfigure[]{
		\includegraphics{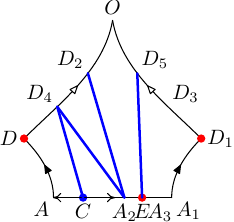}
		\label{fig_cuff_7}
}
		\subfigure[]{
		\includegraphics{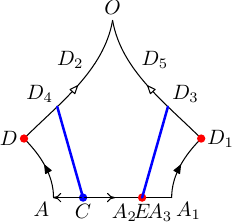}
		\label{fig_cuff_8}
}
		\caption{}
		\label{fig_cuff_reduce_2}
\end{figure}

Segments in Figure \ref{fig_cuff_6} and Figure \ref{fig_cuff_7} correspond to a curve in the family $\tilde{l}_{CE}$. One of the two curves is shorter than $l$, the curve in Figure \ref{fig_cuff_5} (explained later). Then, without loss of generality, we assume the curve in Figure \ref{fig_cuff_6} is shorter. By replacing the broken line $ED_5A_3D_3$ in Figure \ref{fig_cuff_6} by the straight line $ED_3$, we get a curve (Figure \ref{fig_cuff_8}) homotopic to the curve in Figure \ref{fig_cuff_6}. This curve is shorter than $l$ and has less intersections with $OD$ than $l$. 

(3.3) $\forall l \in \tilde{l}_{CE}$ we use the operations described in (3.1) and (3.2) to change $l$ until we can not use the operations. Everytime we use the operations, we get a curve shorter and has less intersections with $OD$ than the original curve. Finally we get the curve in Figure \ref{fig_sys_cuff} (dnonted $l_{CE}$) or the curve in Figure \ref{fig_sys_cuff_fake} (denoted $l'_{CE}$). We prove later in Lemma xxx that $l'_{CE}$ cannot be lifted to a systole. 

(3.4) One thing left to prove: in the operation described in (3.2), the curve in Figure \ref{fig_cuff_5} is longer than the curve in Figure \ref{fig_cuff_6} or the curve in Figure \ref{fig_cuff_7}. We prove it with the following fomular \cite[(3.10)]{bgw}: 
\begin{eqnarray}
		\cosh c &=& \cosh d \cosh a \cosh b-\sinh a \sinh b \label{for_birect_1}. 
		\label{for_quad}
\end{eqnarray}

\begin{figure}[htbp]
\centering
\includegraphics{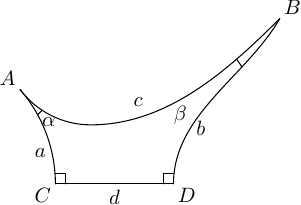}
\caption{}
\label{fig_2_right_angle_quadrilateral}
\end{figure}

The meaning of the symbols in the formula is illustrated by Figure \ref{fig_2_right_angle_quadrilateral}. In Figure \ref{fig_cuff_5}, 
\[
		\cosh |A_2D_2| = \cosh |AD| \cosh |AA_2| \cosh |DD_2| - \sinh |AA_2| \sinh |DD_2|. 
\]

While in Figure \ref{fig_cuff_6}, 
\[
		\cosh |A_3D_5| = \cosh |A_1D_1| \cosh |A_1A_3| \cosh |D_1D_5| - \sinh |A_1A_3| \sinh |D_1D_5|. 
\]
Here $|AD| = |A_1D_1|$ by the symmetry of the pentagon. $|DD_2| = |D_1D_5|$ because $|D_2|$ and $D_5$ correspond to the same point in the orbifold and so are $D$ and $D_1$. 

Therefore curve in Figure \ref{fig_cuff_5} is longer than curve in Figure \ref{fig_cuff_6} if and only if $|A_2D_2| > |A_3D_5|$. $|A_2D_2| > |A_3D_5|$ if and only if $|AA_2| > |A_1A_3|$. 

Similarly, in Figure \ref{fig_cuff_5}, 
\[
		\cosh |A_3D_3| = \cosh |A_1D_1| \cosh |A_1A_3| \cosh |D_1D_3| - \sinh |A_1A_3| \sinh |D_1D_3|. 
\]

While in Figure \ref{fig_cuff_7}, 
\[
		\cosh |A_2D_4| = \cosh |AD| \cosh |AA_2| \cosh |DD_4| - \sinh |AA_2| \sinh |DD_4|. 
\]
Here $|AD| = |A_1D_1|$ by the symmetry of the pentagon. $|DD_4| = |D_1D_3|$ because $|D_4|$ and $D_3$ correspond to the same point in the orbifold and so are $D$ and $D_1$. 

Therefore curve in Figure \ref{fig_cuff_5} is longer than curve in Figure \ref{fig_cuff_7} if and only if $|A_3D_3| > |A_2D_4|$. $|A_3D_3| > |A_2D_4|$ if and only if $|AA_2| < |A_1A_3|$. 

In conclusion, the curve in Figure \ref{fig_cuff_5} is longer than either the curve in Figure \ref{fig_cuff_6} or the curve in Figure \ref{fig_cuff_7}. 

(4) For $\tilde{l}_C$, the proof is similar to (1) and (2). $\forall l \in \tilde{l_C}$ (Figure \ref{fig_01_curve_2}), by using the reflection, we can construct two connecting broken lines. One line connects $C$ and a point on $OD$ ($D_6$ in Figure \ref{fig_01_curve_3}), while the other connects $C$ and a point on $OD_1$ ($D_7$ in Figure \ref{fig_01_curve_3}). $D_6$ and $D_7$ correspond to the same point in the orbifold. The length of the broken lines are equal to $l$ since all the changes are reflections. 

\begin{figure}[htbp]
		\centering
		\subfigure[]{
		\includegraphics{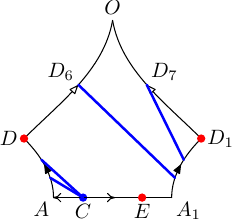}
		\label{fig_01_curve_2}
}
		\subfigure[]{
		\includegraphics{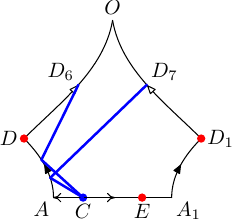}
		\label{fig_01_curve_3}
}
		\subfigure[]{
		\includegraphics{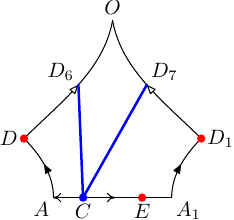}
		\label{fig_01_curve_1}
}
		\caption{}
		\label{fig_01_curve}
\end{figure}

Then the straight lines connecting $C,D_6$ and $C,D_7$ are shorter than the corresponding broken lines in Figure \ref{fig_01_curve_3}. Therefore, the shortest curve in the family $\tilde{l}_C$ is the closed geodesic in the orbifold consisting of two straight lines in the pentagon, one connecting $C$ and a point in $OD$, the other connecting $C$ and a point in $OD_1$ (denoted $l_C$). (See Figure \ref{fig_sys_01})

\begin{figure}[htbp]
		\centering

		\subfigure[Curve $l_{CD}$]{
		\includegraphics{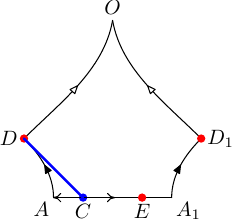}
		\label{fig_sys_00}
}
\subfigure[Curve $l_{DE}$]{
		\includegraphics{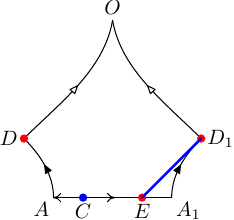}
		\label{fig_sys_11}
}
\subfigure[Curve $l_{CE}$]{
		\includegraphics{cuff-4.pdf}
		\label{fig_sys_cuff}
}
\subfigure[Curve $l'_{CE}$]{
		\includegraphics{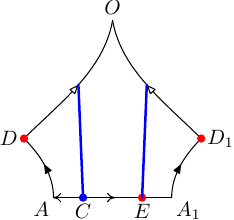}
		\label{fig_sys_cuff_fake}
}
		\subfigure[Curve $l_C$]{
		\includegraphics{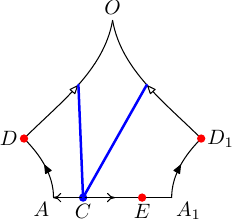}
		\label{fig_sys_01}
}

		\caption{}
		\label{fig_sys_cand}
\end{figure}

\end{proof}

\section{Calculations}

\subsection{The curve $l_{DE}$ and $l'_{CE}$}

In this subsection, we prove that the curve $l_{DE}$ and $l'_{CE}$ cannot be lifted to systoles.

First we give the following Proposition: 

\begin{proposition}
		The curve $l'_{CE}$ in $S^2(2,2,2,n)$ is not possible to lift to a systole in the $\Gamma(2,n)$ surface $\Sigma$. 
		\label{prop_cuff_fake_not}
\end{proposition}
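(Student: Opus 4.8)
The plan is to compare the curve $l'_{CE}$ against the other candidate $l_{CE}$ in the same family $\tilde l_{CE}$, and show that $l'_{CE}$ is strictly longer, so that it can never be the shortest curve in its family and hence, by Corollary \ref{cor_reduce}, can never lift to a systole. Looking at Figure \ref{fig_sys_cuff_fake} versus Figure \ref{fig_sys_cuff}, both $l'_{CE}$ and $l_{CE}$ are closed geodesics in $\tilde l_{CE}$ passing the singular point $C$ and the unfolding of each, via the reflection trick of Section \ref{sec_shortest}, is a connected broken line in the pentagon $ODAA_1D_1$ joining $C$ (or its reflection) to a point of $OD\cup OD_1$. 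So first I would recall from the proof of Proposition \ref{prop_reduce}, part (3.3), that after applying the reduction operations (3.1)--(3.2) the only two terminal configurations are $l_{CE}$ (crossing $OD$ exactly once) and $l'_{CE}$ (the configuration shown in Figure \ref{fig_sys_cuff_fake}, differing from $l_{CE}$ by exactly one extra reflected component, i.e.\ one more intersection with $OD$).

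Next I would make the length comparison explicit using Formula \eqref{for_quad} (the two–right–angle quadrilateral relation $\cosh c = \cosh d\cosh a \cosh b - \sinh a\sinh b$), exactly as in part (3.4) of the proof of Proposition \ref{prop_reduce}. Unfolding $l'_{CE}$ produces a broken line with one more vertex on the line $OD$ than the unfolding of $l_{CE}$; replacing that detour by the straight segment in the pentagon strictly shortens it, and the resulting straight segment is precisely the unfolding of $l_{CE}$ (or of a curve homotopic to $l_{CE}$ of the same length). Hence $|l'_{CE}| > |l_{CE}|$ for every choice of the two parameters $(c,t)$. Therefore $l'_{CE}$ is never the shortest curve in $\tilde l_{CE}$.

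Finally I would invoke Corollary \ref{cor_reduce}: within a fixed family, only the shortest curve can lift to a systole of $\Sigma$, because the lift length is a fixed positive multiple of the orbifold length (Theorem \ref{thm_equ}, with the multipliers recorded in Table \ref{tab_ratio}, in particular the column for $\tilde l_{CE}$). Since $|l'_{CE}| > |l_{CE}|$ and both lie in $\tilde l_{CE}$ with the same lift ratio, the lift of $l'_{CE}$ is strictly longer than the lift of $l_{CE}$, so $l'_{CE}$ cannot be a systole.

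The main obstacle I anticipate is making the "one extra intersection with $OD$" comparison genuinely rigorous: one must check that the unfolded broken line for $l'_{CE}$ really does dominate (in length) the unfolded straight segment for $l_{CE}$ uniformly in $(c,t)$, including the degenerate or near-degenerate cases, and that the reflection/unfolding procedure applied to $l'_{CE}$ lands in the pentagon in the configuration claimed rather than in some other terminal configuration not covered by (3.1)--(3.2). Carefully tracking which vertices of the pentagon correspond to the same orbifold point (the identifications $A\leftrightarrow A_1$, $D\leftrightarrow D_1$ used throughout Section \ref{sec_shortest}) is what makes the quadrilateral-inequality bookkeeping go through.
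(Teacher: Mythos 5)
Your overall strategy --- reduce the proposition to the single inequality $|l'_{CE}|>|l_{CE}|$ and then invoke Theorem \ref{thm_equ} and Corollary \ref{cor_reduce} with the common lift ratio of the family $\tilde l_{CE}$ --- is the same as the paper's. The gap is in the step where you establish that inequality. The curve $l'_{CE}$ is itself a geodesic of the orbifold, lying in a homotopy class of arcs from $C$ to $E$ \emph{different} from that of $l_{CE}$ (they correspond to different elements under Lemma \ref{lem_corresp_seg}); consequently its unfolding across $OD$ is already a single straight segment --- this is exactly how the paper computes its length in \eqref{for_sys_cuff_fake} --- and not a broken line containing a detour that can be straightened into the unfolding of $l_{CE}$. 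A shortening move of the kind you describe would be an operation of type (3.1)--(3.2) from the proof of Proposition \ref{prop_reduce}; the reason $l'_{CE}$ survives as a terminal configuration there, and the reason the paper needs the present proposition at all, is precisely that no such move applies to it. So ``replace the detour by the straight segment'' does not produce $l_{CE}$, and the comparison does not follow.

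The gap is essential, not cosmetic: the inequality you assert ``for every choice of the two parameters $(c,t)$'' fails in that generality. Taking the paper's own formulas \eqref{for_sys_cuff} and \eqref{for_sys_cuff_fake} at face value,
\[
\cosh|l'_{CE}|-\cosh|l_{CE}|=2\sinh^2\tfrac{s}{2}\,\cosh\tfrac{t}{2}\cosh\tfrac{c-t}{2}-2\sinh\tfrac{t}{2}\sinh\tfrac{c-t}{2},
\]
whose sign is that of $\sinh^2\tfrac{s}{2}-\tanh\tfrac{t}{2}\tanh\tfrac{c-t}{2}$; since the trirectangle relation \eqref{for_cs} forces $s\to 0$ as $c\to\infty$, this is negative for $c$ large and $t$ near $c/2$. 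Hence no argument uniform in $(c,t)$ and insensitive to the actual hyperbolic data can close the gap (and, per the introduction, the exclusion is only needed for the maximal surface). The paper's route is different: it performs a cut-and-paste of the fundamental pentagon (Figures \ref{fig_cut_paste_1}--\ref{fig_cut_paste_4}) so that $l_{CE}$ and $l'_{CE}$ become two straight segments $CE$ and $CE_1$ issuing from the same point $C$ in one re-glued polygon with two right angles, and compares them by the hyperbolic cosine law using $|A_3C|>|A_2C|$ and $|A_2E|=|A_3E_1|$. You would need to replace your straightening step by a concrete comparison of this kind.
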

\begin{proof}
		We prove this Proposition by cut and paste of the pentagon. In the pentagon (Figure \ref{fig_cut_paste_1}), we cut along $CD$ and $D_1E$ then paste along $CE$. Then we get a quadrilateral (Figure \ref{fig_cut_paste_2}). 
\begin{figure}[htbp]
		\centering
		\subfigure[]{
		\includegraphics{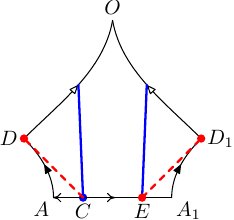}
		\label{fig_cut_paste_1}
}
		\subfigure[]{
		\includegraphics{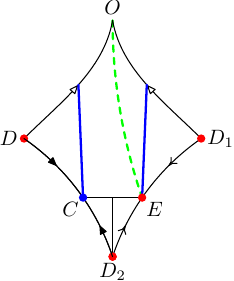}
		\label{fig_cut_paste_2}
}
		\subfigure[]{
		\includegraphics{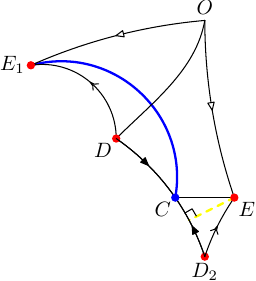}
		\label{fig_cut_paste_3}
}
		\subfigure[]{
		\includegraphics{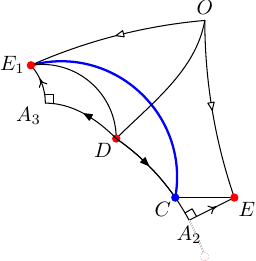}
		\label{fig_cut_paste_4}
}
		\caption{}
		\label{fig_cut_paste}
\end{figure}

In The quadrilateral in Figure \ref{fig_cut_paste_2}, we cut along the dashed green segment $OE$, then paste along the segments $OD$ and $OD_1$. Then we get a pentagon (Figure \ref{fig_cut_paste_3}). 

In The pentagon in Figure \ref{fig_cut_paste_3}, we cut along the dashed yellow segment that is from $E$ and perpendicular to $DD_2$, then paste along the segments $DE_1$ and $D_1D$. We denote the foot of the perpendicular to be $A_2$. Then we get a pentagon (Figure \ref{fig_cut_paste_4}). The pentagon in Figure \ref{fig_cut_paste_4} has two right angles, $A_2$ and $A_3$. 

In all the four subfigures of Figure \ref{fig_cut_paste}, the blue segments represent the curve $l'_{CE}$, while the segment $CE$ always represents the curve $l_{CE}$. Then since in Figure \ref{fig_cut_paste_4}, $|A_2A_3| = 2|CD|$, therefore $|A_3C|> |A_2C|$. $|A_2E| = |A_3E_1|$. Then by hyperbolic cosine law, $CE < CE_1$. 

\end{proof}

Then we give the lengths of the curves in Figure \ref{fig_sys_cand} by $c,s$ and $t$. It is a preparation for proving Proposition \ref{prop_11_not}. We recall that in the pentagon $|CE|=c/2$, $|AC|=t/2$, $|EA_1| = (c-t)/2$ and $|AD| = |A_1D_1| = s/2$ (see Section \ref{sec_geo_orbi}). 

It is direct that, for the curve in Figure \ref{fig_sys_cuff}, 
\begin{equation}
		|l_{CE}| = \frac{c}{2}. 
		\label{for_sys_cuff}
\end{equation}
We calculate the lengths of $l_{CD}$ (Figure \ref{fig_sys_01}) and $l_{CD_1}$ (Figure \ref{fig_sys_11}) by the cosine law of hyperbolic right-angled triangles: 

\begin{align}
	\cosh |l_{CD}| &= \cosh |CD|	\label{for_sys_00} \\
\nonumber		&= \cosh |AC| \cosh |AD| \\
\nonumber		&= \cosh \frac{t}{2} \cosh \frac{s}{2}. 
\end{align}

\begin{align}
		\cosh |l_{DE}| &= \cosh |D_1E| \label{for_sys_11}\\
		&= \cosh |EA_1| \cosh |A_1D_1| \nonumber\\
		&= \cosh \frac{c-t}{2} \cosh \frac{s}{2}. \nonumber
\end{align}

For $l_{C}$ and $l'_{CE}$, to calculate their lengths, we attach a copy of the pentagon to its edge $OD$ (Figure \ref{fig_11_01_two_1}). The length of the segmenmet $CE'$ is equal to the length of the curve $l'_{CE}$ in Figure \ref{fig_sys_11} and the length of the senge $CC'$ is equal to the length of the curve $l_C$ in Figure \ref{fig_sys_01} by symmetry. We use Formula (\ref{for_birect_1}) to calculate these lengths. Here $|AA_1'| = 2|AD| = s$, $|AC| = t$, $|A_1'E'| = |A_1E| = (c-t)/2$, and $|A_1'C' | = |A_1C| = c - t/2$. Then

\begin{align}
		\cosh l'_{CE} &= \cosh |CE'| \label{for_sys_cuff_fake} \\
		\nonumber &= \cosh |AA_1'| \cosh |AC| \cosh |A_1'E'| - \sinh |AC| \sinh |A_1'E'| \\
		\nonumber &= \cosh s \cosh \frac{t}{2} \cosh \frac{c-t}{2} - \sinh \frac{t}{2} \sinh \frac{c-t}{2}. 
\end{align}

\begin{figure}[htbp]
		\centering
		\includegraphics{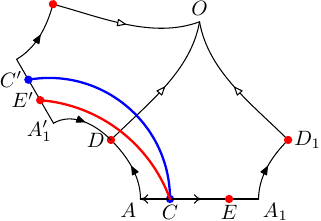}
		\caption{}
		\label{fig_11_01_two_1}
\end{figure}

\begin{align}
		\cosh l_{C} &= \cosh |CC'| \label{for_sys_01} \\
		\nonumber &= \cosh |AA_1'| \cosh |AC| \cosh |A_1'C'| - \sinh |AC| \sinh |A_1'C'| \\
		\nonumber &= \cosh s \cosh \frac{t}{2} \cosh c-\frac{t}{2} - \sinh \frac{t}{2} \sinh c-\frac{t}{2}. 
\end{align}

Now we are ready to prove the following Proposition: 

\begin{proposition}
		In the maximal surface, $l_{DE}$ in $S^2(2,2,2,n)$ is not possible bo be lifted to a systole of the surface. 
		\label{prop_11_not}
\end{proposition}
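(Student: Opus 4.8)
The plan is to show that in any $\Gamma(2,n)$ surface, the lift of $l_{DE}$ is strictly longer than the lift of one of the other three candidate curves, so it can never be the (unique-length) systole. Combining Table \ref{tab_ratio} with the length formulas (\ref{for_sys_cuff})–(\ref{for_sys_01}), the lengths of the lifts to $\Sigma$ are: for $l_{CE}$ the value $2|l_{CE}| = c$ (since the ratios along the tower are $2,1,2$); for $l_{CD}$ the value $4|l_{CD}|$; and for $l_{DE}$ the value $8|l_{DE}|$ when $n$ is odd and $4|l_{DE}|$ when $n$ is even (ratios $2,n,2$ or $2,n,1$, times the extra factor $2$ coming from the orbifold covering $S^2(2,2,2,n)\to S^2(2,2,n,n)$ recorded in the first row of the table — I will restate the bookkeeping carefully at the start of the proof). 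So it suffices to prove the pointwise geometric inequality $4|l_{DE}| > c = 2|l_{CE}|$, i.e. $2|l_{DE}| > |l_{CE}| = c/2$, and in fact I will aim for the stronger $|l_{DE}| \ge c/2$ together with $4|l_{DE}| > c$; even the weak form $|l_{DE}|>c/4$ suffices when $n$ is even, and $|l_{DE}|>c/8$ when $n$ is odd, so there is a lot of room.

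The key step is therefore the elementary hyperbolic estimate
\begin{equation*}
		\cosh |l_{DE}| = \cosh\frac{c-t}{2}\,\cosh\frac{s}{2} \ \ge\ \cosh\frac{s}{2}\,\cdot 1 \ \ge\ \cosh\frac{c}{2}\cdot\frac{1}{\cosh\frac{c}{2}}\cdot\cosh\frac{s}{2},
\end{equation*}
which I will replace by the cleaner route: from (\ref{for_cs}) we have $\cosh\frac{s}{2}\cosh\frac{c}{2}=\cos\frac{\pi}{n}\le 1$, but that makes $s$ small, not large, so instead I will use $\cosh|l_{DE}| = \cosh\frac{c-t}{2}\cosh\frac{s}{2}\ge \cosh\frac{c-t}{2}$, and then bound $\frac{c-t}{2}$ from below using $t\le c$ is not enough — here is the real point: recall $t\le c$ (Section \ref{sec_geo_orbi}) gives only $\frac{c-t}{2}\ge 0$. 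So I must genuinely use the seam length. Concretely, the right inequality to prove is $|l_{DE}|\ge \max\{|l_{CE}|,|l_{CD}|\}$ up to the covering factors, and since $\cosh|l_{CD}| = \cosh\frac{t}{2}\cosh\frac{s}{2}$ and $\cosh|l_{DE}| = \cosh\frac{c-t}{2}\cosh\frac{s}{2}$, comparing $l_{DE}$ with $l_{CD}$ reduces to comparing $\frac{c-t}{2}$ with $\frac{t}{2}$, i.e. to whether $t\le c/2$. When $t\le c/2$ we get $|l_{DE}|\ge|l_{CD}|$, hence $8|l_{DE}|\ge 8|l_{CD}| > 4|l_{CD}|$, done; when $t> c/2$ I will instead compare $l_{DE}$ with $l_{CE}$ directly, using $\cosh|l_{DE}|=\cosh\frac{c-t}{2}\cosh\frac{s}{2}$ and the relation (\ref{for_cs}) to eliminate $s$, reducing everything to a one-variable inequality in, say, $u=\cosh\frac{c}{2}$ and $v=\cosh\frac{c-t}{2}$ with the constraint $1\le v\le u$ and $\cosh\frac{s}{2}=\cos\frac\pi n / u$.

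The main obstacle, then, is the regime $t>c/2$: there $l_{CD}$ is the short one among $\{l_{CD},l_{DE}\}$ and I cannot simply compare $l_{DE}$ to it, so I need the comparison $k\cdot|l_{DE}|>2|l_{CE}|=c$ with $k\in\{4,8\}$, i.e. $|l_{DE}|>c/k$. Since $\cosh|l_{DE}|\ge\cosh\frac{c}{2}\cdot\cosh\frac{s}{2}/\cosh\frac{c}{2}$ is circular, the honest bound is $\cosh|l_{DE}|=\cosh\frac{c-t}{2}\cosh\frac{s}{2}\ge\cosh\frac{s}{2}$, and I must show $\arccosh(\cosh\frac{s}{2})=\frac{s}{2}>c/k$. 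From (\ref{for_cs}), $\cosh\frac{c}{2}=\cos\frac{\pi}{n}/\cosh\frac{s}{2}\le 1/\cosh\frac{s}{2}$, so large $c$ forces small $s$ — the dangerous corner is $c$ large, $t$ near $c$, $s$ near $0$. I expect to resolve this by noting that when $c$ is large the curve $l_{CE}$ has length $c$ which already exceeds the systole realized by the surface maximizing the systole (bounded by $2\arccosh K$ with $K$ as in Theorem \ref{thm_main}, a fixed constant for fixed $n$), so $l_{CE}$ itself is disqualified as a systole there and $l_{DE}$ a fortiori; more cleanly, I will show that whenever $t>c/2$ we have $c>$ the common value forced by Proposition \ref{prop_equal}, hence the configuration is not the maximal one and $l_{DE}$ is irrelevant, while whenever $t\le c/2$ the clean inequality $|l_{DE}|\ge|l_{CD}|$ above finishes the proof. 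Assembling these two cases, together with the covering-degree table, yields that $l_{DE}$'s lift is never the shortest among the four candidates, hence never the systole of the maximal surface.
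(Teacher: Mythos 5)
Your proposal has a genuine gap, and it is not the route the paper takes. The static inequality you are aiming for --- that the lift of $l_{DE}$ is never the shortest among the four candidates --- is simply false on a large part of the $(c,t)$ parameter space: when $t$ is close to $c$ we have $\cosh|l_{DE}|=\cosh\frac{c-t}{2}\cosh\frac{s}{2}\approx\cosh\frac{s}{2}$, and since $\sinh\frac{s}{2}\sinh\frac{c}{2}=\cos\frac{\pi}{n}$ forces $s\to 0$ as $c\to\infty$, the lift of $l_{DE}$ (length $2n|l_{DE}|$ or $4n|l_{DE}|$, not the $4|l_{DE}|$ or $8|l_{DE}|$ of your bookkeeping --- that slip is actually in your favor but still needs fixing) tends to $0$ while every other candidate stays bounded below. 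So on those (non-maximal) surfaces $l_{DE}$ genuinely lifts to the systole, and your case $t\le c/2$ comparison with $l_{CD}$, which is fine as far as it goes, cannot be completed by a pointwise comparison in the remaining regime. You recognize this and retreat to ``then the surface is not maximal,'' but the justification you sketch is circular: it appeals to Proposition \ref{prop_equal} and to the value of the maximal systole from Theorem \ref{thm_main}, both of which are established only after, and using, the present proposition.

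What is missing is the structural input the paper uses to make the non-maximality argument close. The paper assumes $l_{DE}$ lifts to a systole and then invokes Lemma \ref{lem_hyp_ell_n}: since $l_{CD}$ and $l_{CE}$ meet $l_{DE}$ at $D$ and $E$, which become regular points of $S^2(2,2,n,n)$, neither can simultaneously lift to a systole. Hence only $l_{DE}$ and $l_{C}$ remain in play, and both satisfy $\partial|\cdot|/\partial t<0$, so their differentials are not non-positively proportional and there is a direction in $(c,t)$ along which both lengths strictly increase; the systole therefore increases under a small perturbation, contradicting maximality. Without the step eliminating $l_{CD}$ and $l_{CE}$ as co-systoles, any such perturbation is blocked (decreasing $t$ shortens $l_{CD}$), which is exactly why your argument stalls in the $t>c/2$ regime. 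If you want to salvage your approach, replace the appeal to Proposition \ref{prop_equal} by this intersection argument and then perturb; as written, the proposal does not prove the statement.
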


\begin{proof}
		The curves in the orbifold $S^2(2,2,2,n)$ that are possible to be lifted to the systole of the $\Gamma(2,n)$ surface are $l_{CD}$, $l_{DE}$, $l_{CE}$ and $l_{C}$. 
		If $l_{DE}$ is lifted to a systole of the surface, then $l_{CD}$ and $l_{CE}$ cannot be lifted to a systole of this surface. This is because $l_{CD}$ and $l_{CE}$ intersect $l_{DE}$ at $D$ and $E$ respectively. $D$ and $E$ are lifted to regular points in $S^2(2,2,n,n)$. Then by Lemma \ref{lem_hyp_ell_n}, since $l_{DE}$ is lifted to a systole, $l_{CD}$ and $l_{CE}$ cannot be lifted to systoles. 

		If $l_{DE}$ is lifted to a systole of the surface, then only $l_{DE}$ and $l_{C}$ can be lifted to systoles of the surface. The lengths of $l_{DE}$ and $l_{C}$ are given by (\ref{for_sys_11}) and (\ref{for_sys_01}) respectively. Here we give the differentials of the lengths:

		First we obtain $\mathrm{d}s/\mathrm{d}c$ by (\ref{for_cs}):
		\begin{eqnarray*}
				\mathrm{d} (\sinh \frac{c}{2} \sinh \frac{s}{2}) &=& \mathrm{d} \cos \frac{\pi}{n} = 0\\
				\cosh \frac{s}{2} \sinh \frac{c}{2}\mathrm{d}s + \cosh \frac{c}{2} \sinh \frac{s}{2}\mathrm{d}c &=& 0 \\
				\frac{\mathrm{d}s}{\mathrm{d}c} &=& -\frac{\cosh \frac{c}{2} \sinh \frac{s}{2}}{\cosh \frac{s}{2} \sinh \frac{c}{2}}. 
		\end{eqnarray*}

		Then for $l_{DE}$
		\begin{eqnarray*}
				\frac{\partial |l_{DE}|}{\partial t} &=& \frac{\partial }{\partial t} \left( \cosh \frac{s}{2} \cosh \frac{c-t}{2} \right) \\
				&=& - \frac{1}{2} \cosh \frac{s}{2} \sinh \frac{c-t}{2}. \\
				\frac{\partial |l_{DE}|}{\partial c} &=& \frac{\partial }{\partial c} \left( \cosh \frac{s}{2} \cosh \frac{c-t}{2} \right) \\
				&=& \frac{1}{2} \left( \sinh \frac{s}{2} \frac{\mathrm{d}s}{\mathrm{d}c} \cosh \frac{c-t}{2} + \cosh \frac{s}{2} \sinh \frac{c-t}{2} \right) \\
				&=& \frac{1}{2} \left(  -\frac{\cosh \frac{c}{2} \sinh^2 \frac{s}{2}}{\cosh \frac{s}{2} \sinh \frac{c}{2}}\cosh \frac{c-t}{2} + \cosh \frac{s}{2} \sinh \frac{c-t}{2} \right). \\
				\mathrm{d}|l_{DE}| &=& \frac{\partial |l_{DE}|}{\partial t}\mathrm{d}t + \frac{\partial |l_{DE}|}{\partial c}\mathrm{d}c. 
		\end{eqnarray*}
		For $l_{C}$
		\begin{align}
				\label{for_sys_01_dt}\frac{\partial |l_{C}|}{\partial t} =& \frac{\partial }{\partial t} \left( \cosh s \cosh \frac{t}{2} \cosh \left( c-\frac{t}{2} \right) - \sinh \frac{t}{2} \sinh \left( c-\frac{t}{2} \right)  \right) \\
				\nonumber=& \frac{1}{2} \left( \cosh s \left( \sinh \frac{t}{2} \cosh \left( c-\frac{t}{2} \right)-  \cosh \frac{t}{2} \sinh \left( c-\frac{t}{2} \right)\right) \right.\\
				\nonumber &\left. -  \cosh \frac{t}{2} \sinh \left( c-\frac{t}{2} \right) + \sinh \frac{t}{2} \cosh \left( c-\frac{t}{2} \right)\right) \\
				\nonumber=& \frac{1}{2} \left( \cosh s +1 \right) \sinh\left( t-c \right). 
		\end{align}
		\begin{eqnarray*}
				\frac{\partial |l_{C}|}{\partial c} &=& \frac{\partial }{\partial c} \left( \cosh s \cosh \frac{t}{2} \cosh \left( c-\frac{t}{2} \right) - \sinh \frac{t}{2} \sinh \left( c-\frac{t}{2} \right)  \right) \\
				&=&  \sinh s \frac{\mathrm{d}s}{\mathrm{d}c} \cosh \frac{t}{2}\cosh \left( c-\frac{t}{2} \right) + \cosh s \cosh \frac{t}{2} \sinh\left( c-\frac{t}{2} \right) \\
				& &- \sinh \frac{t}{2} \cosh\left( c-\frac{t}{2} \right) \\
				&=& -\frac{\cosh \frac{c}{2} \sinh \frac{s}{2}}{\cosh \frac{s}{2} \sinh \frac{c}{2}}\sinh s  \cosh \frac{t}{2}\cosh \left( c-\frac{t}{2} \right) + \cosh s \cosh \frac{t}{2} \sinh\left( c-\frac{t}{2} \right) \\
				& &- \sinh \frac{t}{2} \cosh\left( c-\frac{t}{2} \right).\\
				\mathrm{d}|l_{C}| &=& \frac{\partial |l_{C}|}{\partial t}\mathrm{d}t + \frac{\partial |l_{C}|}{\partial c}\mathrm{d}c. 
		\end{eqnarray*}

		The two tangent vectors $\mathrm{d}|l_{DE}|$, $ \mathrm{d}|l_{C}|$ are non-zero vectors. 
		$\forall c>0, 0\le t\le c$, 
		\[
				\frac{\partial |l_{DE}|}{\partial t} <0, \frac{\partial |l_{C}|}{\partial t} <0. 
		\]
		Therefore $\mathrm{d}|l_{DE}| \ne k \mathrm{d}|l_{C}|$ $\forall k\le0$. 
		Then there is a vector $(A(c,t), B(c,t))$ such that 
\[
		\mathrm{d}|l_{DE}|(A(c,t), B(c,t))>0, \mathrm{d}|l_{C}|(A(c,t), B(c,t))>0, 
\]
$\forall c>0, 0\le t\le c$. By the assumption that $l_{DE}$ is lifted to a systole of the surface, only $l_{DE}$ and $l_{C}$ can be lifted to a systole of the surface. Then there is another surface with systole bigger than the surface. Therefore the surface is not maximal
\end{proof}

\subsection{$l_{CE}$, $l_{CD}$ and $l_{C}$ in the maximal surface}

Now only $l_{CE}$, $l_{CD}$ and $l_{C}$ can be lifted to a systole of the maximal surface. 

We have the following Proposition:
\begin{proposition}
		If $\Sigma_0$ is the maximal $\Gamma(2,n)$ surface, then 
		\[
				|l_{C}(\Sigma_0)| = |l_{CD}(\Sigma_0)| = |l_{CE}(\Sigma_0)|. 
		\]
		\label{prop_equal}
\end{proposition}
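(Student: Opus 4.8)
The plan is to show that at a surface maximizing the systole among $\Gamma(2,n)$ surfaces, the three candidate curves $l_C$, $l_{CD}$, $l_{CE}$ must all lift to systoles and hence have equal length. Recall from Propositions \ref{prop_cuff_fake_not} and \ref{prop_11_not} that $l'_{CE}$ and $l_{DE}$ can never lift to a systole, so the systole of any $\Gamma(2,n)$ surface is the lift of one of $l_C$, $l_{CD}$, $l_{CE}$ (via Corollary \ref{cor_reduce} and Lemma \ref{lem_hyp_ell_n_2}). The systole length is therefore $\min$ of the three quantities $|l_C(\Sigma)|, |l_{CD}(\Sigma)|, |l_{CE}(\Sigma)|$, each of which is, by Theorem \ref{thm_equ} and Table \ref{tab_ratio}, a fixed constant multiple of the length of the corresponding curve in $S^2(2,2,2,n)$, and these lengths are explicit functions of $(c,t)$ given in \eqref{for_sys_cuff}, \eqref{for_sys_00}, \eqref{for_sys_01} (with $s$ determined by $c$ through \eqref{for_cs}). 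So the whole problem reduces to a two-variable optimization: maximize $f(c,t) = \min\{a_1 |l_C|, a_2 |l_{CD}|, a_3 |l_{CE}|\}$ over the region $c>0$, $0\le t\le c$, for appropriate positive constants $a_i$.

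First I would set up this min-of-three optimization precisely, writing out the three length functions and their gradients (the computations \eqref{for_sys_01_dt} and the displayed derivatives of $|l_{DE}|$ in the proof of Proposition \ref{prop_11_not} are the model; I need the analogous ones for $|l_{CD}|$ and $|l_{CE}| = c/2$). The key principle is standard: at an interior maximum of a min of smooth functions, the gradients of the functions that attain the minimum cannot all lie in an open half-plane — otherwise one could move in a common ascent direction and increase all active functions, hence increase $f$, contradicting maximality. This is exactly the contradiction device already used in the proof of Proposition \ref{prop_11_not}. So the argument is: (i) rule out the maximum occurring where only one curve is shortest (a single smooth function has no interior max here, or its gradient is nonzero, giving an ascent direction); (ii) rule out the maximum occurring where exactly two of the three are tied for shortest, by showing their two gradients always have a common ascent direction on the whole region (i.e. they are never negative scalar multiples of one another), just as was done for the pair $(l_{DE}, l_C)$; (iii) conclude that at the maximum all three must be simultaneously shortest, i.e. equal.

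The main obstacle is step (ii): I must check, for each of the three pairs $\{l_C,l_{CD}\}$, $\{l_C,l_{CE}\}$, $\{l_{CD},l_{CE}\}$, that on the region $\{c>0,\ 0\le t\le c\}$ the two gradient vectors are never antiparallel — equivalently, that there is no point where $\nabla(\text{one}) = k\,\nabla(\text{other})$ with $k\le 0$. For the pairs involving $l_{CE}$ this should be easy since $|l_{CE}|=c/2$ has the constant gradient $(\tfrac12,0)$ in $(c,t)$, so it suffices to check a sign of one partial derivative of the other curve (this mirrors the role of $\partial_t < 0$ in Proposition \ref{prop_11_not}). The genuinely delicate pair is $\{l_C, l_{CD}\}$: here I expect to need a sign analysis of a $2\times 2$ determinant built from the derivatives of \eqref{for_sys_00} and \eqref{for_sys_01}, using the constraint relation $\mathrm{d}s/\mathrm{d}c = -\cosh\tfrac c2 \sinh\tfrac s2 / (\cosh\tfrac s2 \sinh\tfrac c2)$ and the hyperbolic identities, possibly together with $\eqref{for_cs}$, to show the relevant expression keeps one sign throughout the region. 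Once all three pairs are handled, step (iii) is immediate and the Proposition follows; the subsequent section then solves the system $a_1|l_C| = a_2|l_{CD}| = a_3|l_{CE}|$ together with \eqref{for_cs} to extract the explicit value $2\arccosh K$ of Theorem \ref{thm_main}.
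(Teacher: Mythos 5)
Your overall strategy --- treat the systole as a min of finitely many smooth functions of $(c,t)$ and show that at a maximum all three of $l_C$, $l_{CD}$, $l_{CE}$ must be active because no proper subset of them can fail to admit a common ascent direction --- is the same first-order variational idea the paper already uses in Proposition \ref{prop_11_not}, and your step (i) and the two pairs involving $l_{CE}$ are unproblematic: there $\nabla|l_{CE}|$ is the constant vector $(\tfrac12,0)$ in the $(c,t)$ coordinates, so one only needs the sign of a single partial derivative of the other function (modulo the boundary points $t=0$ and $t=c$, where $\partial_t|l_{CD}|$ resp.\ $\partial_t|l_C|$ vanish, which you do not address). But the step you yourself flag as delicate, the pair $\{l_C,l_{CD}\}$, is a genuine gap rather than a routine verification. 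Showing that $\nabla(2|l_{CD}|)$ and $\nabla|l_C|$ are never antiparallel on $\{c>0,\ 0\le t\le c\}$ is equivalent to showing that the crossing value $l_0(c)$ of the increasing function $t\mapsto 2|l_{CD}|$ and the decreasing function $t\mapsto|l_C|$ is a strictly monotone function of $c$; neither the paper nor your proposal establishes this, and it is not an obviously one-signed quantity. Note also that your criterion is sufficient but not necessary for ruling out a maximum (antiparallel gradients only kill the \emph{first-order} ascent direction), so if the computation failed at even one point your argument would collapse while the Proposition could still be true.

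The paper's proof is structured precisely to avoid this pair. Using the endpoint comparison $2|l_{CD}|<|l_C|$ at $t=0$ and $2|l_{CD}|>|l_C|$ at $t=c$ together with the monotonicity in $t$, it shows that for fixed $c$ the maximum over $t$ of the min is attained either on an interval where the constant function coming from $l_{CE}$ is the unique minimum (case (a), killed by increasing $c$), or at the single crossing point of $2|l_{CD}|$ and $|l_C|$ (case (b)). Case (b) is then disposed of not by a gradient computation but by the cut-and-paste involution of the pentagon (Figure \ref{fig_cut_paste}), which produces a second coordinate pair $(c'',t'')$ representing an isometric surface with $|l_{CD}|=|l_{C''E''}|$, $|l_{CE}|=|l_{C''D''}|$ and $|l_C|=|l_{C''}|$; this swaps the roles of $l_{CD}$ and $l_{CE}$ and converts case (b) into case (a), where the ascent direction is simply ``increase $c$.'' To complete your route you must actually carry out the antiparallelism analysis for $\{l_C,l_{CD}\}$ (and handle the boundary of the parameter region and the existence of a maximizer on the unbounded domain); otherwise you should import the paper's duality argument for that one pair.
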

\begin{proof}
		First we calculate the partial dirivitives of the lengths. 	It is direct that 
		\begin{equation}
				\label{for_sys_cuff_dt}\frac{\partial |l_{CE}|}{\partial t} = \frac{\partial c}{\partial t} = 0. 
		\end{equation}
		We have obtained $\frac{\partial |l_{C}|}{\partial t}$ in (\ref{for_sys_01_dt}). For $\frac{\partial |l_{CD}|}{\partial t}$, we have the following formula: 
		\begin{align}
				\label{for_sys_00_dt}\frac{\partial |l_{CD}|}{\partial t} =& \frac{\partial }{\partial t}\left( \cosh \frac{t}{2}\cosh \frac{s}{2} \right)\\
				\nonumber =& \frac{1}{2} \sinh \frac{t}{2}\cosh \frac{s}{2} . 
		\end{align}

		For any fixed $ c>0$, $|l_{CD}|$ is strictly increasing about $t$, while $|l_{C}|$ is strictly decreasing about $t$ when $0\le t\le c$ by (\ref{for_sys_00_dt}) and (\ref{for_sys_01_dt}) respectively. By table \ref{tab_ratio}, $|l_{CD}(\Sigma)| = 4 |l_{CD}|$, while $|l_{C}(\Sigma)| = 2 |l_{C}|$. Then we compare $|l_{CD}(\Sigma)|$ with $|l_{C}(\Sigma)|$ by comparing $|l_{CD}|$ and $2|l_C|$ when $t = 0$ and $t = c$. 

		By formulae (\ref{for_sys_00}) and (\ref{for_sys_01}), when $t=0$, $\cosh |l_{CD}| = \cosh \frac{s}{2}\cosh \frac{t}{2} = \cosh \frac{s}{2}$. Therefore $2|l_{CD}| = s$. $\cosh |l_C| = \cosh s \cosh \frac{t}{2} \cosh (c-\frac{t}{2}) - \sinh \frac{t}{2} \sinh (c-\frac{t}{2}) =\cosh s \cosh c $. Therefore $2|l_{CD}| < |l_C| $ when $t = 0$. When $t = c$, 
		\begin{eqnarray*}
				\cosh 2|l_{CD}| &=& 2\cosh^2|l_{CD}| -1 \\
				&=& 2\cosh^2 \frac{s}{2} \cosh^2 \frac{t}{2} -1\\
				&=& 2\cosh^2 \frac{s}{2} \cosh^2 \frac{c}{2} -1\\
				&=&  \left( 2\cosh^2\frac{s}{2} -1 \right) \cosh^2 \frac{c}{2} + \cosh^2\frac{c}{2} -1\\
				&=& \cosh s \cosh^2 \frac{c}{2} + \sinh^2\frac{c}{2}. \\
				\cosh |l_{C}| &=& \cosh s \cosh \frac{t}{2} \cosh \left( c-\frac{t}{2} \right) - \sinh \frac{t}{2} \sinh \left( c-\frac{t}{2} \right) \\
				&=& \cosh s \cosh^2 \frac{c}{2} - \sinh^2\frac{c}{2}. 
		\end{eqnarray*}
		Therefore, when $t=c$, $2|l_{CD}| > |l_C|$. 

		For any fixed $c$, $|l_{CE}| \equiv c$ and $|l_{CE}(\Sigma)| \equiv 4c$. The systole of the surface is $\min(|l_{CE}(\Sigma)|,|l_{CD}(\Sigma)|,|l_{C}(\Sigma)| ) = 2\min (2|l_{CE}|,2|l_{CD}|,|l_{C}|)$.

		Figure \ref{fig_function} shows these lengths as functions about $t$. In this Figure, $(t_0, l_0)$ is the intersecting point of the graphs of the functions $2|l_{CD}|$ and $|l_{C}|$ about $t$. 
		
		(a) If $l_0> 2|l_{CE}| $, then $\forall t \in [0,c]$, $\min (2|l_{CE}|,2|l_{CD}|,|l_{C}|)\le 2|l_{CE}| = 2c$. When $t\in (t_1,t_2)$, $\min (2|l_{CE}|,2|l_{CD}|,|l_{C}|) = 2|l_{CE}| = 2c$. Here $t_1$, $t_2$ are the intersction point of $2|l_{CE}|,2|l_{CD}|$ and $2|l_{CE}|,|l_{C}|$ respectively (See Figure \ref{fig_function_1}). That is, the systole of the surface corresponding to $(c,t)$ is $4c$ when $t \in (t_1,t_2)$. Then $\forall (c,t)$ with $t\in (t_1,t_2)$, there is an $\varepsilon >0$, such that the systole of the surface corresponding to $(c+\varepsilon,t)$ is $4(c+\varepsilon)$. Therefoe if  $l_0> 2|l_{CE}|$, the corresponding surface cannot be the maximal surface. 
		\begin{figure}[htbp]
				\centering
		\subfigure[]{
				\includegraphics{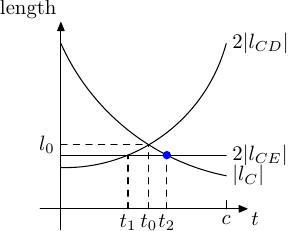}
				\label{fig_function_1}
		}
		\subfigure[]{
				\includegraphics{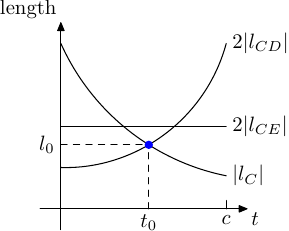}
				\label{fig_function_2}
		}
				\caption{}
				\label{fig_function}
		\end{figure}

		Therefore if a surface is the maximal $\Gamma(2,n)$ surface, then $l_0\le 2c$. 

		(b) If $2c > l_0$ then we prove the corresponding surface is not the maximal surface by changing the coordinate. 

		By the cut-and-paste described in the proof of Proposition \ref{prop_cuff_fake_not} (Figure \ref{fig_cut_paste}), we get another pentagon representation of the $S^2(2,2,2,n)$ induced by the $\Gamma(2,n)$ surface. 

\begin{figure}[htbp]
		\centering
		\subfigure[]{
		\includegraphics{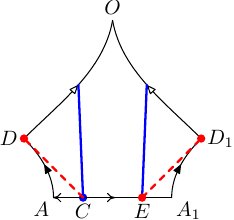}
		\label{fig_cut_paste_6}
}
		\subfigure[]{
		\includegraphics{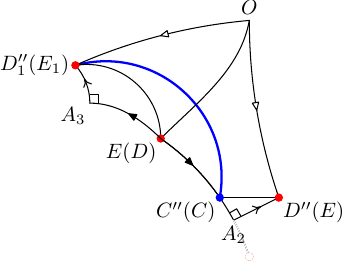}
		\label{fig_cut_paste_5}
}
		\caption{}
		\label{fig_cut_paste_reuse}
\end{figure}

		We pick the pentagon representations shown in Figure \ref{fig_cut_paste_1} and Figure \ref{fig_cut_paste_4}, see Figure \ref{fig_cut_paste_reuse}. To avoid confusion, we relabel the pentagon in Figure \ref{fig_cut_paste_5}. We define a new coordinate $(c'',t'')$ for pentagon in Figure \ref{fig_cut_paste_5}: $c'' = |C''E''|$, $t''=|C''D''|$. By the correspoindance of segments in the two pentagons, we have $|CD| = |C''E''|$ and $|CE| = |C''D''|$.

		Then by this observation, we have the following conclusion: 

		For a $\Gamma(2,n)$ surface represented by the pair $(c,t)$, there exists a pair $(c'',t'')$ such that:

		(1) The surfaces represented by $(c,t)$ and $(c'',t'')$ are isometric. 

		(2) $|l_{CD}| = |l_{C''E''}|$, $|l_{CE}| = |l_{C''D''}|$ and $|l_{C}| = |l_{C''}|$. 

		If $2c>l_0$, then for fixed $c$, the maximal systole about $t$ is realized by the coordinate $(c,t_0)$, the blue point $P$ in Figure \ref{fig_function_2}. At this point, $|l_C| = 2|l_{CD}|$ and $2|l_{CE}| > |l_{C}|$ (See Figure \ref{fig_function_2}). Then by the conclusion, we have there is $(c'',t'')$ corresponding to the isometric surface as $(c,t)$ such that $|l''_C| = 2|l''_{CE}|$ and $2|l''_{CD}| > |l''_{C}|$ (the blue point $P''$ in Figure \ref{fig_function_1}). By the proof in (a), if $2c>l_0$, then the surface is not maximal. 

		By (a) and (b), if the surface is maximal then $l_0=2c$, It implies that $|l_C| = 2|l_{CD}| = 2|l_{CE}|$, namely $|l_C(\Sigma)| = |l_{CD}(\Sigma)| = |l_{CE}(\Sigma)|$.

\end{proof}

\subsection{Final calculation}

Finally we calculate the systole of the surface with $|l_C(\Sigma)| = |l_{CD}(\Sigma)| = |l_{CE}(\Sigma)|$. To calculate the length, we obtain a subsurface with the signature $(1,2)$ by cutting along the red curves $c_0, c'_0$ in Figure \ref{fig_exp_1_2}. $C_1$, $C_2$, $C_3$ and $C_4$ are branch points of the branch cover $\pi$. By the proof of Theorem \ref{thm_equ}, in the $\Sigma_{(1,2)}$, the curve connecting $C_1$, $C_2$ is $l_{CE}(\Sigma)$, the curve connecting $C_1$, $C_3$ is $l_{C}(\Sigma)$, the curve connecting $C_2$, $C_3$ is $l_{CD}(\Sigma)$. 

\begin{figure}[htbp]
		\centering
		\includegraphics{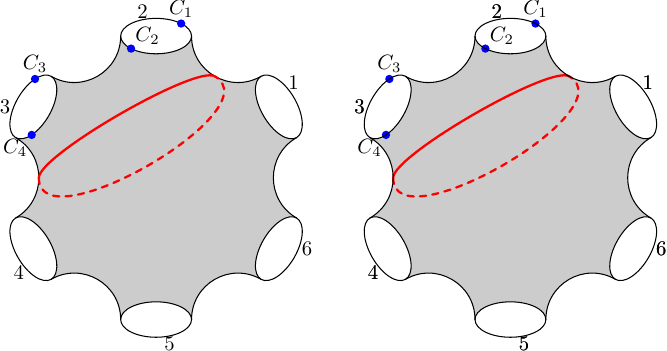}
		\caption{}
		\label{fig_exp_1_2}
\end{figure}

Next, we represent the length of $\partial \Sigma_{(1,2)}$ by $c$ and $s$. The seams and cuffs of the $\Gamma(2,n)$ surface that intersect $\Sigma_{(1,2)}$ cut $\Sigma_{(1,2)}$ into four equal hexagons(See Figure \ref{fig_sigma12_1}). In Figure \ref{fig_sigma12_1}, $A_2A_1H_2H_1A_4A_3$ is one of the hexagons. This hexagon is a right-angled hexagon. It is clear the angles are right at the four vertices $A_1$, $A_2$, $A_3$ and $A_4$ by the definition of cuffs and seams. For $H_1$ and $H_2$, we consider the $n$-holed spheres of the $\Gamma(2,n)$ surface. If cutting all the seams of one $n$-holed sphere, we get two isometric $2n$-polygon with order $n$ rotations. In each polygon, we pick the common perpendicular between two nearest non-neighboring seams. Two such curves, each from a polygon, forms a simple closed curve. This curve is one component of $\partial\Sigma_{(1,2)}$. 
\begin{figure}[htbp]
		\centering
		\includegraphics{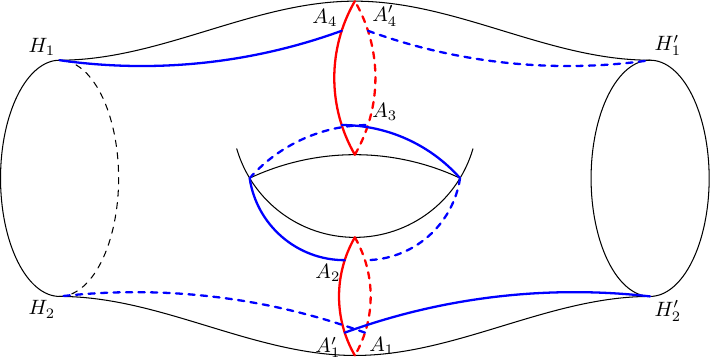}
		\caption{The red curves are cuffs. The blue curves are seams. }
		\label{fig_sigma12_1}
\end{figure}

In the right-angled hexagon $A_2A_1H_2H_1A_4A_3$, $|A_1A_2| = |A_3A_4| = c$, $|A_2A_3| = s$, then by Formula 2.4.1(i) in \cite[p. 454]{buser2010geometry}, 
\begin{align}
	\label{for_partial_sigma_12}	\cosh |H_1H_2| &= \sinh |A_1A_2|\sinh |A_3A_4|\cosh |A_2A_3| - \cosh |A_1A_2|\cosh |A_3A_4| \\
		\nonumber&= \sinh^2c\cosh s - \cosh^2c. 
\end{align}

We pick another set of curves to cut $\Sigma_{(1,2)}$ to obtain the length of the systole. We pick two common perpendicular segmets between the two boundary components of $\Sigma_{(1,2)}$. One of the segmet is homotopic to the broken segment $H_1A_3A_3'H_1'$ (see Figure \ref{fig_sigma12_1}) relative to the boundary, the other is homotopic to the broken segment $H_2A_1A_1'H_2'$ relative to the boundary. These two segments are $H_3H_3'$ and $H_4H_4'$ in Figure \ref{fig_sigma12_2} respectively. In Figure \ref{fig_sigma12_2}, by the definition of $t$ in \ref{sec_geo_orbi} and the symmetry of $\Sigma_{(1,2)}$, the mid-point of $H_3H_3'$ and $H_4H_4'$ are the branch points $C_1$ and $C_4$ respectively. 

\begin{figure}[htbp]
		\centering
		\includegraphics{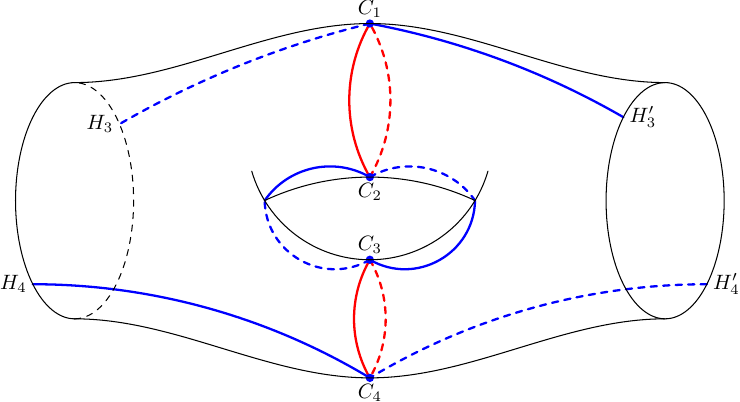}
		\caption{}
		\label{fig_sigma12_2}
\end{figure}

Then by cutting along $H_3H_3'$ and $H_4H_4'$, we get a surface with the topology of annulus (see Figure \ref{fig_sigma12_3}). The unique non-trivial closed geodesic is illustrated by the segment $H_5H_6$.  The common perpendiculars between $H_5H_6$ and $H_iH_i'$ ($i= 1,2$) meet the mid-points $C_1$ or $C_4$ since all such common perpendiculars have the same length by the symmetry of $\Sigma_{(1,2)}$, then by the sine law of right-angle hexagon (Formula 2.4.1 (ii) in \cite[ p 454]{buser2010geometry}), the common perpendiculars meet the mid-points of $H_iH_i'$ ($i=1,2$). 

\begin{figure}[htbp]
		\centering
		\includegraphics{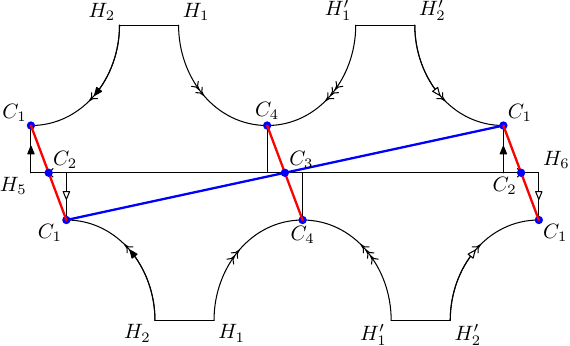}
		\caption{}
		\label{fig_sigma12_3}
\end{figure}

Then we describe the curve $l_{CD}(\Sigma)$, $l_{CE}(\Sigma)$ and $l_{C}(\Sigma)$ in the annulus. We recall that $l_{CD}(\Sigma)$ passes $C_2$ and $C_3$, $l_{CE}(\Sigma)$ passes $C_1C_2$ or $C_3C_4$ and $l_{C}(\Sigma)$ passes $C_1C_3$ or $C_2C_4$. 

The curve $l_{CD}(\Sigma)$ (the curve meeting $C_2$ and $C_3$) does not touch $H_1H_1'$ or $H_2H_2'$. Therefore it is the unique non-trivial closed geodesic in the annulus, namely the curve in Figure \ref{fig_sigma12_3} corresponding to $H_5H_6$. In Figure \ref{fig_sigma12_3}, the red segments ($C_1C_2C_1$ and $C_4C_3C_4$) are cuffs ($l_{CE}(\Sigma)$). Therefore the intersction between cuffs and $H_5H_6$ are the branch points $C_2$ and $C_3$. The curve $l_C(\Sigma)$ passes $C_1C_3$ or $C_2C_4$. Then in Figure \ref{fig_sigma12_3}, the blue curve connecting $C_1C_3C_1$ is one of the $l_C(\Sigma)$ curves. 

By Proposition \ref{prop_equal}, $|l_{C}(\Sigma_0)| = |l_{CD}(\Sigma_0)| = |l_{CE}(\Sigma_0)|$. Then in Figure \ref{fig_sigma12_3}, the triangle $\triangle C_1C_2C_3$ is a equilateral triangle, because the edges $C_1C_2$, $C_2C_3$ and $C_3C_1$ are the half of $l_{CE}(\Sigma)$, $l_{CD}(\Sigma)$ and $l_{C}(\Sigma)$ respectively. When the triangle is equilateral, the shape of the annulus is shown in Figure \ref{fig_sigma12_4}. 

\begin{figure}[htbp]
		\centering
		\includegraphics{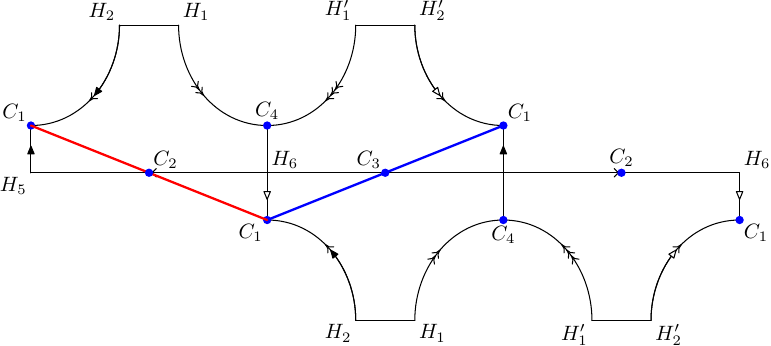}
		\caption{}
		\label{fig_sigma12_4}
\end{figure}

We assume in Figure \ref{fig_sigma12_4}, $h = |C_1H_5| = |C_4H_6|$, $k = |C_1C_2| = |C_2C_3| = |C_3C_1|$. Then $|C_2H_6| = |C_3H_6| = k/2$. Then we have the following two formulae: 

In the hexagon $H_1H_2C_1H_5H_6C_4$, by formula 2.4.1(i) in \cite[ p 454]{buser2010geometry} we have:
\begin{align}
		\nonumber\cosh |H_1H_2| &= \sinh |C_1H_5| \sinh |C_4H_6|\cosh |H_5H_6| - \cosh |C_1H_5| \cosh |C_4H_6| \\
		\nonumber\cosh l &= \sinh^2 h \cosh k - \cosh^2 h \\
		\cosh^2 h &= \frac{\cosh l + \cosh k}{\cosh k -1}. \label{for_h_by_l}
\end{align}

Then in the triangle $\triangle C_1H_6C_3$, by the hyperbolic cosine law (Formula 2.2.2 (i) in \cite[p. 454]{buser2010geometry} ), we have 
\begin{align}
		\nonumber\cosh |C_1C_3| &= \cosh |C_1H_6| \cosh |C_3H_6| \\
		\cosh k &= \cosh h \cosh \frac{k}{2}. \label{for_k_and_h}
\end{align}

Finally we can get the formula for $\cosh k$:

\begin{align}
		\nonumber\frac{\cosh^2 k}{\cosh^2 \frac{k}{2}} &= \cosh h \,\,\,\, \text{by (\ref{for_k_and_h})} \\
		&= \frac{\cosh l + \cosh k}{\cosh k -1}.\,\,\,\, \text{by (\ref{for_h_by_l})} 
		\label{for_eli_h}
\end{align}
 For convenience, we assume $K = \cosh k$, Then 
 \[
		 \frac{\cosh^2 k}{\cosh^2 \frac{k}{2}} = \frac{2\cosh^2 k}{\cosh k + 1} = \frac{2K^2}{K+1}.
 \]

 We use it on (\ref{for_eli_h}): 
 \begin{align}
		 \nonumber\frac{2K^2}{K+1} &= \frac{K+\cosh l}{K-1} \\
		 \nonumber&= \frac{K+ \sinh^2c\cosh s - \cosh^2c}{K-1} \,\,\,\,\text{by (\ref{for_partial_sigma_12})} \\
		 \nonumber&= \frac{(K^2-1)\cosh s -K^2+K}{K-1} \,\,\,\,c=k\text{ by definition} \\
		 \nonumber&= (K+1) \cosh s - K \\
		 \nonumber&= (K+1) (2\sinh^2 \frac{s}{2} +1 )- K \\
		 \nonumber&= (K+1) \left( 2\frac{\cos^2\frac{\pi}{n}}{\sinh^2\frac{c}{2}} +1 \right) -K\,\,\,\,\text{by (\ref{for_cs})} \\
		 &= (K+1) \left( \frac{4\cos^2\frac{\pi}{n}}{K-1} +1 \right) - K. 
		 \label{for_sys_equ}
 \end{align}

 Then from (\ref{for_sys_equ}), we have 
 \[
		 2K^3-3K^2+1-4\cos^2\frac{\pi}{n}(K+1)^2 =0
 \]

 The unique real solution of this equation is: 
\begin{eqnarray*}
		K &=& \sqrt[3]{\frac{1}{216}L^3 +\frac{1}{8} L^2 + \frac{5}{8} L - \frac{1}{8} + \sqrt{\frac{1}{108}L(L^2+18L+27)} } \\
		& & + \sqrt[3]{\frac{1}{216}L^3 +\frac{1}{8} L^2 + \frac{5}{8} L - \frac{1}{8} - \sqrt{\frac{1}{108}L(L^2+18L+27)} } \\
		& & + \frac{L+3}{6}.
\end{eqnarray*}
and  $L= 4\cos^2 \frac{\pi}{n}$. 

 Now we calculate the coordinate $(c,t)$ when the systole is maximal. 

 It is clear that $c=\arccosh K$. Then we calculate $t$ using (\ref{for_sys_00}). 
\begin{eqnarray*}
		\cosh \frac{t}{2} &=& \frac{\cosh \frac{c}{2}}{\cosh \frac{s}{2}} \\
		&=& \frac{\cosh^2 \frac{c}{2}}{\cos \frac{\pi}{n}} \,\,\,\,\text{by (\ref{for_cs})} \\
		&=& \frac{\cosh c + 1}{2 \cos \frac{\pi}{n}} \\
		&=& \frac{K + 1}{2 \cos \frac{\pi}{n}}. 
\end{eqnarray*}

 Therefore we get the Theorem:
 \begin{theorem}
		 The maximal systole of the $\Gamma(2,n)$ surface is 
		 \[
				 2\arccosh K. 
		 \]
Here 
\begin{eqnarray*}
   	 K &=& \frac{1}{6} \left( T^3+27T^2+12\sqrt{3}\sqrt{T^3+18T^2-27T} +135T-27 \right)^{\frac{1}{3}} + \\
   	 & &\frac{T^2+18T+9}{6\left( T^3+27T^2 + 12\sqrt{3}\sqrt{T^3+18T^2-27T} +135T-27  \right)}  + \frac{T+3}{6}, 
\end{eqnarray*}

and  $T= 4\cos^2 \frac{\pi}{n}$. 

 The maximal systole is obtained when 
 \[
		 (c,t) = \left ( \arccosh K, 2\arccosh \frac{K + 1}{2 \cos \frac{\pi}{n}} \right ).  
 \]

		 \label{thm_main}
 \end{theorem}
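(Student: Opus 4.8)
The plan is to reduce Theorem~\ref{thm_main} to a single trigonometric computation, starting from Proposition~\ref{prop_equal}. That proposition tells us that on the maximal $\Gamma(2,n)$ surface $\Sigma_0$ the three surviving candidates satisfy $|l_C(\Sigma_0)| = |l_{CD}(\Sigma_0)| = |l_{CE}(\Sigma_0)|$, and this common value is the systole; so it suffices to compute it. First I would cut $\Sigma_0$ along the two curves indicated in Figure~\ref{fig_exp_1_2} to extract the genus-one subsurface $\Sigma_{(1,2)}$ with two boundary circles, and, using the covering description from the proof of Theorem~\ref{thm_equ}, identify inside $\Sigma_{(1,2)}$ the three geodesics as the arcs joining the branch points $C_1,C_2,C_3$ of $\pi$ ($C_1C_2$ giving $l_{CE}$, $C_2C_3$ giving $l_{CD}$, $C_1C_3$ giving $l_C$).

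Next I would decompose $\Sigma_{(1,2)}$ geometrically. Cutting along the seams and cuffs of $\Gamma(2,n)$ that meet it splits it into four congruent right-angled hexagons with consecutive sides $c,c,s$; the cosine rule for right-angled hexagons (see \cite{buser2010geometry}) then gives the length $\ell$ of a boundary component of $\Sigma_{(1,2)}$ via $\cosh\ell = \sinh^2 c\cosh s - \cosh^2 c$. Then cutting $\Sigma_{(1,2)}$ along two common perpendiculars between its two boundary circles turns it into an annulus whose core geodesic is exactly $l_{CD}(\Sigma_0)$; by the symmetry of $\Sigma_{(1,2)}$ the feet of these perpendiculars are forced to be branch points, so in the annulus picture the three curves bound a geodesic triangle $C_1C_2C_3$ which, by the equal-length hypothesis, is equilateral with side $k$ equal to half the systole, and $\cosh c = \cosh k$ because $c = k$ by construction.

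Now I would extract two relations. From the hexagon $H_1H_2C_1H_5H_6C_4$ the hexagon formula gives $\cosh^2 h = (\cosh\ell + \cosh k)/(\cosh k - 1)$, where $h$ is the distance from a branch point to the core geodesic; from the right triangle $\triangle C_1H_6C_3$ the cosine law gives $\cosh k = \cosh h\cosh\frac{k}{2}$. Eliminating $h$ and using $\cosh^2 k/\cosh^2\tfrac{k}{2} = 2\cosh^2 k/(\cosh k + 1)$ yields one equation relating $\ell$ and $k$; substituting $\cosh\ell = \sinh^2 c\cosh s - \cosh^2 c$ and the constraint $\cosh\tfrac{s}{2} = \cos\tfrac{\pi}{n}/\cosh\tfrac{c}{2}$ from (\ref{for_cs}), and writing $K = \cosh k = \cosh c$, collapses everything to the cubic
\[
2K^3 - 3K^2 + 1 - 4\cos^2\tfrac{\pi}{n}\,(K+1)^2 = 0 .
\]
Finally I would solve this cubic by Cardano's formula with $L = 4\cos^2\tfrac{\pi}{n}$, verify that it has a unique real root and that this root is $>1$ (so that $\arccosh K$ is defined and the surface genuinely exists), and match it to the closed form in the statement; the systole is $2k = 2\arccosh K$. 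The coordinate $t$ of the maximizing surface then comes straight out of (\ref{for_sys_00}): $\cosh\tfrac{t}{2} = \cosh\tfrac{c}{2}/\cosh\tfrac{s}{2} = \cosh^2\tfrac{c}{2}/\cos\tfrac{\pi}{n} = (K+1)/(2\cos\tfrac{\pi}{n})$.

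I expect the genuinely delicate part to be the geometric bookkeeping rather than the algebra: correctly decomposing $\Sigma_{(1,2)}$ first into the four right-angled hexagons and then into an annulus, and in particular verifying (from the symmetry of $\Sigma_{(1,2)}$ and the sine law for right-angled hexagons) that the common perpendiculars between the boundary components really do pass through the branch points, so that the equilateral-triangle picture in Figure~\ref{fig_sigma12_4} is legitimate. Once that is in place, eliminating $h$ and reducing to the cubic is routine, although one still has to be slightly careful to select the correct branch of Cardano's formula and to confirm that the admissible root is unique.
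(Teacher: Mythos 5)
Your proposal follows essentially the same route as the paper's final section: Proposition \ref{prop_equal} reduces the problem to the equal-length configuration, the subsurface $\Sigma_{(1,2)}$ is decomposed into four right-angled hexagons and then into an annulus containing the equilateral triangle $C_1C_2C_3$, the hexagon relation $\cosh^2 h=(\cosh l+\cosh k)/(\cosh k-1)$ together with $\cosh k=\cosh h\cosh\frac{k}{2}$ eliminates $h$ and yields the cubic $2K^3-3K^2+1-4\cos^2\frac{\pi}{n}(K+1)^2=0$, after which Cardano and (\ref{for_sys_00}) give $K$ and $t$ exactly as in the paper. The one caveat is that the constraint you quote from (\ref{for_cs}) must be used in the form $\sinh\frac{s}{2}\sinh\frac{c}{2}=\cos\frac{\pi}{n}$ (which is what the paper actually substitutes, via $\sinh^2\frac{s}{2}=\cos^2\frac{\pi}{n}/\sinh^2\frac{c}{2}$); the $\cosh$ form as displayed in (\ref{for_cs}) is a typo and would not produce the stated cubic.
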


\bibliographystyle{alpha}

\end{document}